\documentclass[12pt,a4paper]{article}
\usepackage{latexsym, amsmath, amsfonts, amscd, amssymb, verbatim, mathrsfs}
\setlength{\textwidth}{6.5in}%
\setlength{\textheight}{10.2in}%
\setlength{\topmargin}{-0.9in}%
\setlength{\oddsidemargin}{-0.1in}%

\def\N{I\!\!N}
\def\R{I\!\!R}
\def\oR{I\!\!\overline{R}}

\def\mA{\mathcal{A}}

\def\mN{\mathcal{N}}
\def\mC{\mathcal{C}}

\def\mG{\mathcal{G}}

\def\mQ{\mathcal{Q}}

\def\mJ{\mathcal{J}}

\def\mU{\mathcal{U}}
\def\mV{\mathcal{V}}
\def\mS{\mathcal{S}}

\def\be{\bar{e}}

\def\ou{\bar{u}}\def\wu{\widetilde{u}}\def\hu{\widehat{u}}
\def\ov{\bar{v}}

\def\oB{\bar{B}}

\def\cl{{\rm cl}}
\def\gph{{\rm gph\,}}

\def\dom{{\rm dom\,}}

\def\cone{{\rm cone}}
\def\argmin{{\rm argmin}}

\def\st{|\,}\def\bst{\big|\,}\def\Bst{\Big|\,}
\def\Limsup{\mathop{{\rm Lim\hspace{0.3mm}sup}}}

\def\liminf{\mathop{{\rm lim\hspace{0.3mm}inf}}}

\parskip=2pt

\begin{document}

\newtheorem{Theorem}{Theorem}[section]
\newtheorem{Proposition}[Theorem]{Proposition}
\newtheorem{Remark}[Theorem]{Remark}
\newtheorem{Lemma}[Theorem]{Lemma}
\newtheorem{Corollary}[Theorem]{Corollary}
\newtheorem{Definition}[Theorem]{Definition}
\newtheorem{Example}[Theorem]{Example}
\newtheorem{CounterExample}[Theorem]{Counter-Example}
\renewcommand{\theequation}{\thesection.\arabic{equation}}
\normalsize

\title{Full Stability for Variational Nash Equilibriums of Parametric
Optimal Control Problems of PDEs\footnote{\textbf{Funding:} The
first author was supported by the Alexander von Humboldt Foundation,
Germany. The second author was partially supported by the German
Research Foundation (DFG) within the priority program ``Non-smooth
and Complementarity-based Distributed Parameter Systems: Simulation
and Hierarchical Optimization'' (SPP 1962) under grant number WA
3626/3-2.}}

\author{Nguyen Thanh Qui\footnote{Department of Mathematics, College of Natural Sciences,
        Can Tho University, Campus II, 3/2 Street, Can Tho, Vietnam (ntqui@ctu.edu.vn);
        Institut f\"{u}r Mathematik, Universit\"{a}t W\"{u}rzburg, Emil-Fischer-Str.~30, 97074 W\"{u}rzburg, Germany
        (thanhqui.nguyen@mathematik.uni-wuerzburg.de).}
        ~~and~~Daniel Wachsmuth\footnote{Institut f\"{u}r Mathematik, Universit\"{a}t W\"{u}rzburg,
        Emil-Fischer-Str.~30, 97074 W\"{u}rzburg, Germany
        (daniel.wachsmuth@mathematik.uni-wuerzburg.de).}}

\maketitle
\date{}

\noindent {\bf Abstract.} This paper investigates full stability
properties for \emph{variational Nash equilibriums} of a system of
parametric nonconvex optimal control problems governed by semilinear
elliptic partial differential equations. We first obtain some new
results on the existence of variational Nash equilibriums for the
system of original/parametric nonconvex optimal control problems.
Then we establish explicit characterizations of the Lipschitzian and
H\"olderian full stability of variational Nash equilibriums under
perturbations. These results deduce the equivalence between
variational Nash equilibriums and local Nash equilibriums in the
classical sense.

\medskip
\noindent {\bf Key words.} Lipschitzian and H\"olderian full
stability, variational inequality, optimal control, partial
differential equation, coderivative, subdifferential.

\medskip
\noindent {\bf AMS subject classification.}\,\ 35J61, 49J52, 49J53, 49K20, 49K30.%

\section{Introduction}

It is well-known that the noncooperative game interactions of $m$
players, where every player finds to optimize individually under the
effects of each other's choices, is a valuable model for competitive
circumstances in economics and operations management. This model is
one of the most importance in the theory of equilibrium problems,
where solutions of the model will be called (Nash) equilibriums. If
each optimization problem in the equilibrium model is convex, the
equilibriums are understood in the classical sense. Otherwise, these
equilibriums will be defined via the variational sense, i.e., they
satisfy a first-order necessary optimality condition in the form of
a generalized equation (a variational system/inequality). Concerning
with the equilibrium model, standard questions related to existence,
uniqueness and stability of equilibriums are interesting and they
need to be addressed. Recently, Rockafellar \cite{Rock18VJM} has
studied the strongly stable local optimality and the strong metric
regularity of variational Nash equilibriums for an abstract
game-like framework of multi-agent optimization via tools of
variational analysis.

In this paper, we will consider an equilibrium model for multi-agent
optimization, where each agent is an optimal control problem
governed by a semilinear elliptic partial differential equation with
the cost functional being nonconvex. By applying techniques of the
optimal control theory and results in the perturbation theory of
maximal monotone and $m$-accretive operators we obtain some new
results on the existence of variational Nash equilibriums for the
original/parametric (nonconvex) equilibrium problem. In addition, we
establish criteria of Lipschitzian and H\"olderian full stability
for the equilibrium problem under (basic and tilt) perturbations by
means of the Mordukhovich's generalized differentiation. The full
stability of the parametric equilibrium problem deduces the local
uniqueness of a variational Nash equilibrium in question and ensures
the equivalence between variational Nash equilibriums and local Nash
equilibriums in the classical sense.

Recent results for generalized Nash equilibrium problems associated
to convex optimal control problems governed by partial differential
equations can be found in
\cite{DreGwi16JOTA,HinSur13PJO,HiSuKo15SIOPT,KKSW19SIOPT}. In
comparison with these works, the methods and techniques we apply to
deal with the nonconvex Nash equilibrium problems in this paper are
different and new (even applying for the convex setting).

The concept of full Lipschitzian stability for local minimizers was
introduced and studied by Levy, Poliquin, and Rockafellar
\cite{LePoRo00SIOPT} in the setting of finite dimensions. Then this
property together with the full H\"olderian stability were
investigated in the infinite dimensional setting by Mordukhovich and
Nghia in the work \cite{MorNgh14SIOPT}, where the notion of full
H\"olderian stability was also introduced in \cite{MorNgh14SIOPT}.
Many researchers have been interested in the full stability for
local minimizers; see, e.g.,
\cite{LePoRo00SIOPT,MorNgh14SIOPT,MoNgRo15MOR,MoOuSa14SIOPT,MoRoSa13SIOPT,MorSar15NA,QuiDWa19SICON}.
The notions of full stability are defined via basic parameters and
tilt ones, where the tilt parameters are related to the concept of
tilt stability introduced by Poliquin and Rockafellar in the paper
\cite{PoRo98SIOPT}; see, e.g.,
\cite{DruLew13SIOPT,EbeWen12NA,GfrMor15SIOPT,LewZha13SIOPT,MorRoc12SIOPT}
for more results on the tilt stability. Note that the concepts of
Lipschitzian and H\"olderian full stability were defined for local
minimizers of parametric optimization problems. Extensions of these
concepts for parametric variational systems (PVSs), Mordukhovich and
Nghia \cite{MorNgh16SIOPT} have studied local strong maximal
monotonicity of set-valued operators in Hilbert spaces with
applications to full Lipschitzian and H\"olderian stability for
solutions of PVSs, where both notions of Lipschitzian and
H\"olderian full stability for solutions of PVSs were also
introduced in \cite{MorNgh16SIOPT}. In addition, the authors in
\cite{MorNgh16SIOPT} established characterizations of these notions
for PVSs. Note that the notions of full stability for solutions of
PVSs are generalized the ones for local minimizers of optimization
problems, and these notions are equivalent in some special cases;
see the comments in \cite{MorNgh16SIOPT} for more details. To the
best of our knowledge, there were only a few applications of the
results of \cite{MorNgh16SIOPT}; see, e.g., \cite{MoNgPh18SVVA}.
Recently, the results of \cite{MorNgh16SIOPT} were applied to some
models that can be regarded as general PVSs in \cite{MoNgPh18SVVA}.
In particular, necessary conditions and sufficient conditions of
full stability for solutions to general parametric variational
inequalities (PVIs) were established in \cite{MoNgPh18SVVA}. Note
that the authors of \cite{MoNgPh18SVVA} considered the PVIs over
fixed constraint sets.

In order to investigate stability for variational Nash equilibriums
of our equilibrium model when the optimal control problems in the
equilibrium model undergone full perturbations, we have recognized
that the results on the Lipschitzian and H\"olderian full stability
for solutions of PVSs given in \cite{MorNgh16SIOPT} are suitable and
effective in applications for our parametric equilibrium problem.
For this reason, we will apply the results of \cite{MorNgh16SIOPT}
and the techniques of the optimal control theory to establish
characterizations of the Lipschitzian and H\"olderian full stability
of variational Nash equilibriums. A crucial role for our stability
results is that a quadratic form defined via the second-order
directional derivatives of the cost functionals is a Legendre form.
The latter fact will be also proved in this paper.

The rest of the paper is organized as follows. Preliminaries given
in Section~2 consist of the definitions for
classical/local/variational Nash equilibriums, standard assumptions
and auxiliary results for optimal control problems, and some
material from variational analysis. Section~3 is devoted to prove
results on existence of variational/classical Nash equilibriums
associated to many finite optimal control problems governed by a
semilinear elliptic partial differential equation. The results
provided in this section are new and they are among our main results
in this paper. In Section~4, we investigate the full stability of
variational Nash equilibriums to the parametric equilibrium problem.
We first prove that the quadratic form defined via the second-order
directional derivatives of the cost functionals is a Legendre form.
Then we establish necessary conditions and sufficient conditions
(resp., explicit characterizations) of the Lipschitzian and
H\"olderian full stability for variational Nash equilibriums with
respect to general nonempty bounded closed convex admissible control
sets (resp., admissible control sets of box constraint type). We
will also prove for the case of admissible control sets of box
constraint type that variational Nash equilibriums and local Nash
equilibriums are equivalent under the full stability condition of
variational Nash equilibriums. Finally, some concluding remarks will
be given in the last section.

\section{Preliminaries}
\setcounter{equation}{0}

\subsection{Variational Nash Equilibrium}

For each $k\in\{1,\ldots,m\}$, consider the optimal control problem
\begin{equation}\label{KthCPrlm}
\begin{cases}
    \mbox{Minimize}\quad J_k(u_1,\ldots,u_m)=\displaystyle\int_{\Omega}L_k(x,y_u(x))dx
                                            +\frac{1}{2}\int_\Omega\zeta_k(x)u_k(x)^2dx
    \vspace{2mm}\\
    \mbox{subject to}\quad u_k\in\mU^k_{ad}\subset L^2(\Omega),
\end{cases}
\end{equation}
where $\zeta_k\in L^\infty(\Omega)$ satisfies
$\zeta_k(x)\geq\zeta_{0k}>0$ for a.a. $x\in\Omega\subset\R^N$, and
$y_u$ is the weak solution associated to the control
$u=\sum_{i=1}^mB_iu_i$ of the Dirichlet problem
\begin{equation}\label{StateEq}
\begin{cases}
\begin{aligned}
    \mA y+f(x,y)&=\sum_{i=1}^mB_iu_i\ &&\mbox{in}\ \Omega\\
               y&=0                   &&\mbox{on}\ \Gamma,
\end{aligned}
\end{cases}
\end{equation}
where $B_i\in L^\infty(\Omega)$ for all $i\in\{1,\ldots,m\}$ and
$\mA$ denotes the second-order differential elliptic operator of the
form
\begin{equation}\label{SOdrEOper}
    \mA y(x)=-\sum_{i=1}^N\sum_{j=1}^N\partial_{x_j}\big(a_{ij}(x)\partial_{x_i}y(x)\big),
\end{equation}
and the admissible control set $\mU^k_{ad}$ in \eqref{KthCPrlm} is
nonempty, convex, closed and bounded in $L^2(\Omega)$.

\begin{Example}\rm
Let us provide a specific example for the admissible control sets
$\mU^k_{ad}$ in \eqref{KthCPrlm}, for $k\in\{1,\ldots,m\}$, that are
very frequently appearing in the applications as follows
\begin{equation}\label{ExACtrSet}
\begin{cases}
    \mU^k_{ad}=\big\{v\in L^2(\Omega)\bst\alpha_k(x)\leq v(x)\leq\beta_k(x)~\mbox{for a.a.}~x\in\Omega\big\}
    \vspace{1.5mm}\\
    \mbox{where}~\alpha_k,\beta_k\in L^\infty(\Omega)~\mbox{with}~\alpha_k(x)<\beta_k(x)~\mbox{for a.a.}~x\in\Omega.
\end{cases}
\end{equation}
\end{Example}

An element $\ou_k\in\mU^k_{ad}$ is said to be a
\emph{solution}/\emph{global minimum} of the control
problem~\eqref{KthCPrlm} if $J_k(\ou_k)\leq J_k(u_k)$ holds for all
$u_k\in\mU^k_{ad}$. We will say that $\ou_k$ is a \emph{local
solution}/\emph{local minimum} of the problem~\eqref{KthCPrlm} if
there exists a closed ball $\oB_\varepsilon(\ou_k)\subset
L^2(\Omega)$ with the center $\ou_k$ and the radius $\varepsilon>0$
such that $J_k(\ou_k)\leq J_k(u_k)$ holds for all
$u_k\in\mU^k_{ad}\cap\oB_\varepsilon(\ou_k)$. The local solution
$\ou_k$ is called \emph{strict} if we have $J_k(\ou_k)<J_k(u_k)$ for
all $u_k\in\mU^k_{ad}\cap\oB_\varepsilon(\ou_k)$ with
$u_k\neq\ou_k$.

\begin{Definition}\rm
A \emph{Nash equilibrium} (in the classical sense) associated to the
optimal control problems of the form \eqref{KthCPrlm} is the
following combination
\begin{equation}\label{ClVNEqui}
    (\ou_1,\ldots,\ou_m)\in\mU_{ad}^1\times\cdots\times\mU_{ad}^m
\end{equation}
such that
\begin{equation}\label{ClKthOpPr}
    \ou_k\in\underset{u_k\in\mU^k_{ad}}\argmin J_k(u_k,\ou_{-k}),\ \forall k=1,\ldots,m,
\end{equation}
where $\ou_k$ is a decision associated to the $k$-th player and
$\ou_{-k}$ stands for those decisions of all other players. We say
that $(\ou_1,\ldots,\ou_m)$ is a \emph{local Nash equilibrium} if
for each $k\in\{1,\ldots,m\}$ there exists a closed ball
$\oB_\varepsilon(\ou_k)\subset L^2(\Omega)$ with the center $\ou_k$
and the radius $\varepsilon>0$ such that
\begin{equation}\label{LcKthOpPr}
    \ou_k\in\underset{u_k\in\mU^k_{ad}\cap\oB_\varepsilon(\ou_k)}\argmin J_k(u_k,\ou_{-k}),\ \forall k=1,\ldots,m.
\end{equation}
\end{Definition}

\begin{Definition}\rm
A \emph{variational Nash equilibrium} associated to the optimal
control problems of the form \eqref{KthCPrlm} is the following
combination
\begin{equation}\label{VNEqui}
    (\ou_1,\ldots,\ou_m)\in\mU_{ad}^1\times\cdots\times\mU_{ad}^m
\end{equation}
such that
\begin{equation}\label{KthPVI}
    0\in\nabla_{u_k}J_k(\ou_k,\ou_{-k})+N(\ou_k;\mU_{ad}^k),\ \forall k=1,\ldots,m,
\end{equation}
where $N(u;\Theta)$ is the normal cone to the convex set $\Theta$ at
$u$ in the sense of convex analysis. In other words, a variational
Nash equilibrium
$(\ou_1,\ldots,\ou_m)\in\mU_{ad}^1\times\cdots\times\mU_{ad}^m$ is a
solution of the system of necessary optimality conditions
\eqref{KthPVI} associated to \eqref{ClKthOpPr}.
\end{Definition}

For each $k\in\{1,\ldots,m\}$, we define the basic parametric cost
functional
\begin{equation}\label{BaPaCFctn}
    \mJ_k(u_k,u_{-k},e_Y,e_k)=J_k(u_k+e_Y,u_{-k})+(e_{k,J},y_{u+e_Y})_{L^2(\Omega)}
\end{equation}
and consider the corresponding fully perturbed problem of the
control problem \eqref{KthCPrlm}--\eqref{SOdrEOper} as follows
\begin{equation}\label{PrKthPrlm}
\begin{cases}
    \mbox{Minimize}\quad\mJ_k(u_k,u_{-k},e_Y,e_k)-(u^*_k,u_k)_{L^2(\Omega)}
    \vspace{2mm}\\
    \mbox{subject to}\quad u_k\in\mU^k_{ad}(e_k)\subset L^2(\Omega),
\end{cases}
\end{equation}
where $y_{u+e_Y}$ is the weak solution associated to the control
$u=\sum_{i=1}^mB_iu_i$ of the perturbed Dirichlet problem
\begin{equation}\label{PrStateEq}
\begin{cases}
\begin{aligned}
    \mA y+f(x,y)&=\sum_{i=1}^mB_iu_i+e_Y\ &&\mbox{in}\ \Omega\\
               y&=0                         &&\mbox{on}\ \Gamma.
\end{aligned}
\end{cases}
\end{equation}
Here, $e_Y\in E_Y$ and $e_k=(e_{k,J},e_{k,\bullet})\in E_{k,J}\times
E_{k,\bullet}=E_k$, where $E_Y\subset L^2(\Omega)$, $E_{k,J}\subset
L^2(\Omega)$ and $E_{k,\bullet}$ are metric parametric spaces.

\begin{Example}\rm
When the admissible control sets $\mU^k_{ad}$ are given by
\eqref{ExACtrSet}, we can consider the corresponding parametric
admissible control sets $\mU^k_{ad}(e_k)$ of the perturbed problem
\eqref{PrKthPrlm} by
\begin{equation}\label{NshAdSetK}
    \mU^k_{ad}(e_k)=\big\{v\in L^2(\Omega)\bst\alpha_k(x)+e_{k,\alpha}(x)\leq v(x)
    \leq\beta_k(x)+e_{k,\beta}(x)~\mbox{for a.a.}~x\in\Omega\big\},
\end{equation}
where
$e_k=(e_{k,J},e_{k,\bullet})=(e_{k,J},e_{k,\alpha},e_{k,\beta})$
with $e_{k,\bullet}=(e_{k,\alpha},e_{k,\beta})\in
L^\infty(\Omega)\times L^\infty(\Omega)$. For this case, the metric
parametric space is given by
\begin{equation}
    E_Y\times E_k
    =E_Y\times E_{k,J}\times E_{k,\bullet}
    \subset L^2(\Omega)\times L^2(\Omega)\times L^\infty(\Omega)\times L^\infty(\Omega)
\end{equation}
with the induced metric defined via the norm
\begin{equation}
\begin{aligned}
    \|(e_Y,e_k)\|
    &=\|(e_Y,e_{k,J},e_{k,\alpha},e_{k,\beta})\|\\
    &=\|e_Y\|_{L^2(\Omega)}+\|e_{k,J}\|_{L^2(\Omega)}+
    \|e_{k,\alpha}\|_{L^\infty(\Omega)}+\|e_{k,\beta}\|_{L^\infty(\Omega)}
\end{aligned}
\end{equation}
of the product space $L^2(\Omega)\times L^2(\Omega)\times
L^\infty(\Omega)\times L^\infty(\Omega)$.
\end{Example}

Let us define
$\mathbf{L}^2(\Omega)=L^2(\Omega)^m=L^2(\Omega)\times\cdots\times
L^2(\Omega)$ and $\mathbf{E}=E_Y\times E_1\times\cdots\times E_m$
the product spaces endowed with the sum norms therein. In what
follows we use the notations for
$\mathbf{u}\in\mathbf{L}^2(\Omega)$,
$\mathbf{u}^*\in\mathbf{L}^2(\Omega)$, $\mathbf{e}\in\mathbf{E}$ by
setting
\begin{equation}\label{AbrNtatn}
\begin{aligned}
    \mathbf{u}&=(u_1,\ldots,u_m)\in\mathbf{L}^2(\Omega),\\
    \mathbf{u}^*&=(u^*_1,\ldots,u^*_m)\in\mathbf{L}^2(\Omega),\\
    \mathbf{e}&=(e_Y,e_1,\ldots,e_m)\in\mathbf{E},
\end{aligned}
\end{equation}
and denote
\begin{equation}\label{NshAdESet}
    \mU_{ad}(\mathbf{e})=\mU^1_{ad}(e_1)\times\cdots\times\mU^m_{ad}(e_m)\subset\mathbf{L}^2(\Omega),
\end{equation}
where $\mU^k_{ad}(e_k)$ is given by \eqref{PrKthPrlm} for every
$k=1,\ldots,m$.

\begin{Definition}\rm
Given
$\mathbf{\ou}^*=(\ou^*_1,\ldots,\ou^*_m)\in\mathbf{L}^2(\Omega)$ and
$\mathbf{\be}=(\be_Y,\be_1,\ldots,\be_m)\in\mathbf{E}$, associated
to the parametric control problem \eqref{PrKthPrlm}, the following
combination
\begin{equation}\label{PerVNEqui}
    (\ou_1,\ldots,\ou_m)\in\mU_{ad}^1(\be_1)\times\cdots\times\mU_{ad}^m(\be_m)
\end{equation}
such that
\begin{equation}\label{PerKthPVI}
    \ou^*_k\in\nabla_{u_k}\mJ_k(\ou_k,\ou_{-k},\be_Y,\be_k)+N(\ou_k;\mU_{ad}^k(\be_k)),\ \forall k=1,\ldots,m,
\end{equation}
is called a \emph{variational Nash equilibrium with respect to the
parameters} $(\mathbf{\ou}^*,\mathbf{\be})$.
\end{Definition}

\subsection{Assumptions and auxiliary results}

Let us give some standard assumptions in optimal control and provide
some auxiliary results related to the optimal control problems
\eqref{KthCPrlm}--\eqref{SOdrEOper}.

We assume that $\Omega\subset\R^N$ with $N\in\{1,2,3\}$. In
addition, for every $k\in\{1,\ldots,m\}$, the functions
$L_k,f:\Omega\times\R\to\R$ are Carath\'eodory functions of the
class $\mC^2$ with respect to the second variable. We now consider
the following assumptions:

\textbf{(A1)} The function $f$ satisfies
$$f(\cdot,0)\in L^2(\Omega)\quad\mbox{and}\quad\dfrac{\partial f}{\partial y}(x,y)\geq0\quad
  \mbox{for a.a.}\ x\in\Omega,$$
and for all $M>0$ there exists a constant $C_{f,M}>0$ such that
$$\left|\dfrac{\partial f}{\partial y}(x,y)\right|+\left|\dfrac{\partial^2f}{\partial y^2}(x,y)\right|\leq C_{f,M},$$
and
$$\left|\dfrac{\partial^2f}{\partial y^2}(x,y_2)-\dfrac{\partial^2f}{\partial y^2}(x,y_1)\right|
  \leq C_{f,M}|y_2-y_1|,$$
for a.a. $x\in\Omega$ and $|y|,|y_1|,|y_2|\leq M$.

\textbf{(A2)} For every $k\in\{1,\ldots,m\}$, the function
$L_k(\cdot,0)\in L^1(\Omega)$, and for all $M>0$ there are a
constant $C_{L_k,M}>0$ and a function $\psi_{k,M}\in L^2(\Omega)$
such that
$$\left|\dfrac{\partial L_k}{\partial y}(x,y)\right|\leq\psi_{k,M}(x),\quad
  \left|\dfrac{\partial^2L_k}{\partial y^2}(x,y)\right|\leq C_{L_k,M},$$
and
$$\left|\dfrac{\partial^2L_k}{\partial y^2}(x,y_2)-\dfrac{\partial^2L_k}{\partial y^2}(x,y_1)\right|
  \leq C_{L_k,M}|y_2-y_1|,$$
for a.a. $x\in\Omega$ and $|y|,|y_1|,|y_2|\leq M$.

\textbf{(A3)} The set $\Omega$ is an open and bounded domain in
$\R^N$ with Lipschitz boundary $\Gamma$, the coefficients $a_{ij}\in
L^\infty(\Omega)$ of the second-order elliptic differential operator
$\mA$ defined by \eqref{SOdrEOper} satisfy the condition
$$\lambda_{\mA}\|\xi\|^2_{\R^N}\leq\sum_{i,j=1}^Na_{ij}(x)\xi_i\xi_j,\
  \forall\xi\in\R^N,\ \mbox{for a.a.}\ x\in\Omega,$$
for some constant $\lambda_{\mA}>0$.

For the sake of convenience in order to investigate properties of
weak solutions to the state equation \eqref{StateEq} which depend on
 controls given on the right-hand side of \eqref{StateEq}, we
consider the following auxiliary state equation
\begin{equation}\label{AuxStaEq}
\begin{cases}
\begin{aligned}
    \mA y+f(x,y)&=u     &&\mbox{in}\ \Omega\\
               y&=0     &&\mbox{on}\ \Gamma.
\end{aligned}
\end{cases}
\end{equation}
Then properties of weak solutions to the state equation
\eqref{StateEq} will be deduced from properties of weak solutions to
the equation \eqref{AuxStaEq}.

\begin{Theorem}\label{ThmExSoStEq}
Assume that the assumptions {\rm\textbf{(A1)}--\textbf{(A3)}} hold.
Then, for each $u\in L^2(\Omega)$, the state
equation~\eqref{AuxStaEq} admits a unique weak solution $y_u\in
H^1_0(\Omega)\cap C(\bar\Omega)$. In addition, for every
$k\in\{1,\ldots,m\}$, there exists a constant $M_k>0$ such that
\begin{equation}\label{EstSolEqSt}
    \|y_u\|_{H^1_0(\Omega)}+\|y_u\|_{C(\bar\Omega)}\leq M_k,\ \forall u\in\mU^k_{ad}.
\end{equation}
Furthermore, if $u^n\rightharpoonup u$ weakly in $L^2(\Omega)$, then
$y_{u^n}\to y_u$ strongly in $H^1_0(\Omega)\cap C(\bar\Omega)$.
\end{Theorem}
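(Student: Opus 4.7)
The plan is to split the proof into three blocks matching the three claims (existence/uniqueness of $y_u\in H^1_0(\Omega)\cap C(\bar\Omega)$, uniform bound over $\mathcal{U}^k_{ad}$, and weak-to-strong continuity of the control-to-state map). First I would pass from the nominal equation to a convenient form by setting $g(x,y):=f(x,y)-f(x,0)$, which is still $\mC^2$ in $y$, still monotone nondecreasing in $y$, and satisfies $g(\cdot,0)=0$; the state equation then becomes $\mA y+g(x,y)=u-f(\cdot,0)$ with right-hand side in $L^2(\Omega)$, by assumption \textbf{(A1)}.

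For existence and uniqueness in $H^1_0(\Omega)$ I would apply the Browder–Minty surjectivity theorem to the operator $\mathbf{A}:H^1_0(\Omega)\to H^{-1}(\Omega)$ defined by $\langle\mathbf{A}y,\varphi\rangle:=\int_\Omega\sum_{i,j}a_{ij}\partial_{x_i}y\,\partial_{x_j}\varphi\,dx+\int_\Omega f(x,y)\varphi\,dx$. Monotonicity and coercivity follow at once from the uniform ellipticity in \textbf{(A3)}, the sign condition $\partial_yf\geq0$ in \textbf{(A1)}, and the Poincar\'e inequality; hemicontinuity follows from the local $\mC^2$ bounds on $f$ in \textbf{(A1)} together with dominated convergence. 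Uniqueness is immediate from the strict monotonicity coming from $\lambda_\mA>0$. For the $L^\infty$ regularity I would use Stampacchia's truncation method: testing the equation with $(y_u-k)^+\in H^1_0(\Omega)$ for $k\geq 0$ and exploiting the monotonicity of $f$ (to discard the $f$-term favorably) yields, together with the Sobolev embedding $H^1_0(\Omega)\hookrightarrow L^6(\Omega)$ available in dimension $N\leq 3$, a bound $\|y_u\|_{L^\infty(\Omega)}\leq C\bigl(\|u\|_{L^2(\Omega)}+\|f(\cdot,0)\|_{L^2(\Omega)}\bigr)$. Global continuity $y_u\in C(\bar\Omega)$ then comes from De Giorgi--Nash--Moser theory applied to the linear equation $\mA y_u=u-f(\cdot,y_u)$, whose right-hand side is now in $L^\infty(\Omega)$, combined with the Wiener-type boundary regularity available for Lipschitz domains.

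The uniform estimate \eqref{EstSolEqSt} is then automatic: because $\mU^k_{ad}$ is bounded in $L^2(\Omega)$, the above bounds on $\|y_u\|_{H^1_0(\Omega)}$ and $\|y_u\|_{C(\bar\Omega)}$ are uniform in $u\in\mU^k_{ad}$ with constant $M_k$ depending only on $\sup_{u\in\mU^k_{ad}}\|u\|_{L^2(\Omega)}$, $\|f(\cdot,0)\|_{L^2(\Omega)}$, and the structural constants from \textbf{(A1)}, \textbf{(A3)}. For the continuity statement, take a sequence $u^n\rightharpoonup u$ in $L^2(\Omega)$; boundedness in $L^2(\Omega)$ gives uniform $H^1_0\cap C(\bar\Omega)$ bounds on $y_{u^n}$, and I would pass to the limit in the weak formulation along a subsequence (using Rellich compactness in $L^2(\Omega)$ and pointwise a.e.\ convergence of $f(\cdot,y_{u^n})$, controlled in $L^\infty$ by \textbf{(A1)}) to identify any weak limit with $y_u$ through uniqueness; then the full sequence converges. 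Strong convergence in $H^1_0(\Omega)$ follows by testing the difference equation with $\phi_n:=y_{u^n}-y_u$: the monotonicity of $f$ yields
\begin{equation*}
\lambda_\mA\|\nabla\phi_n\|_{L^2(\Omega)}^2\leq (u^n-u,\phi_n)_{L^2(\Omega)},
\end{equation*}
and the right-hand side tends to $0$ because $u^n-u\rightharpoonup 0$ weakly while $\phi_n\to 0$ strongly in $L^2(\Omega)$ by Rellich. Finally, uniform convergence on $\bar\Omega$ follows from De Giorgi--Nash--Moser applied to $\mA\phi_n+c_n\phi_n=u^n-u$ with $c_n(x):=\int_0^1\partial_yf(x,y_u+t\phi_n)\,dt\geq0$ uniformly bounded in $L^\infty$: this yields a uniform $C^{0,\alpha}(\bar\Omega)$ bound on $\phi_n$, and Arzel\`a--Ascoli together with the already proven pointwise limit gives $y_{u^n}\to y_u$ in $C(\bar\Omega)$. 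The step I expect to be the most delicate is the global $C(\bar\Omega)$ regularity up to the Lipschitz boundary and the corresponding uniform H\"older bound needed for the $C(\bar\Omega)$-convergence, since these rest on linear elliptic regularity results that must be invoked carefully rather than proved here.
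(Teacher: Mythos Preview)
The paper's own proof is a one-line citation of \cite[Theorem~2.1]{CaReTr08SIOPT}; your proposal unpacks essentially the standard argument behind that cited result (monotone-operator existence, Stampacchia $L^\infty$ bound, De Giorgi--Nash--Moser regularity, compactness for weak-to-strong continuity). In that sense the two are not really different routes: you are reconstructing what the reference does, whereas the paper simply defers to it.

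There is, however, a genuine technical gap in your existence step. Assumption \textbf{(A1)} gives only \emph{local} bounds on $\partial_y f$ and $\partial_y^2 f$ (for $|y|\le M$), with no global growth condition; consequently the Nemytskii map $y\mapsto f(\cdot,y)$ is \emph{not} a well-defined operator from $H^1_0(\Omega)$ to $H^{-1}(\Omega)$ in general (think of $f(x,y)=e^y-1$, which satisfies \textbf{(A1)}), so Browder--Minty cannot be applied to $\mathbf{A}$ as you wrote it, and your hemicontinuity argument via ``local $\mC^2$ bounds plus dominated convergence'' breaks for unbounded $y\in H^1_0(\Omega)$. The standard fix --- and what the cited result actually does --- is to \emph{first} truncate $f$ in $y$ at level $M$ to obtain a globally Lipschitz $f_M$, apply Browder--Minty to the truncated problem, then run Stampacchia on the truncated solution $y_M$ to get $\|y_M\|_{L^\infty}\le C$ with $C$ independent of $M$, and finally observe that for $M>C$ the truncation is inactive, so $y_M$ solves the original equation. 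Once you reorder these two steps, the rest of your sketch (uniform bound over the bounded set $\mU^k_{ad}$, strong $H^1_0$ convergence via the monotonicity inequality, and $C(\bar\Omega)$ convergence via uniform H\"older bounds and Arzel\`a--Ascoli) is correct, with the boundary H\"older regularity on Lipschitz domains being, as you note, the place where one must invoke rather than prove the relevant linear elliptic theory.
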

\begin{proof}
By applying \cite[Theorem~2.1]{CaReTr08SIOPT}, we obtain the
assertions of the theorem. $\hfill\Box$
\end{proof}

\begin{Theorem}\label{Thm24CRT}
Assume that the assumptions {\rm\textbf{(A1)}--\textbf{(A3)}} hold.
Then, associated to \eqref{AuxStaEq}, the control-to-state operator
$G:L^2(\Omega)\to H^1_0(\Omega)\cap C(\bar\Omega)$ defined by
$G(u)=y_u$ is of class $\mC^2$. Moreover, for every $u,v\in
L^2(\Omega)$, $z_{u,v}=G'(u)v$ is the unique weak solution of
\begin{equation}\label{EqSolZuv}
\begin{cases}
  \begin{aligned}
     \mA z+\frac{\partial f}{\partial y}(x,y)z&=v\ &&\mbox{in}\ \Omega\\
                                             z&=0  &&\mbox{on}\ \Gamma.
  \end{aligned}
\end{cases}
\end{equation}
Finally, for every $v_1,v_2\in L^2(\Omega)$,
$z_{v_1v_2}=G''(u)(v_1,v_2)$ is the unique weak solution of
\begin{equation}\label{EqSolSeGvv}
\begin{cases}
  \begin{aligned}
     \mA z+\frac{\partial f}{\partial y}(x,y)z+\frac{\partial^2f}{\partial y^2}(x,y)z_{u,v_1}z_{u,v_2}
      &=0\ &&\mbox{in}\ \Omega\\
     z&=0  &&\mbox{on}\ \Gamma,
  \end{aligned}
\end{cases}
\end{equation}
where $y=G(u)$ and $z_{u,v_i}=G'(u)v_i$ for $i=1,2$.
\end{Theorem}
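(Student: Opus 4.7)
The plan is to deduce Theorem~\ref{Thm24CRT} from the implicit function theorem applied to the equation defining $G$, and then to obtain the derivative formulas by differentiating the defining relation. Since this is stated in \cite{CaReTr08SIOPT}, the cleanest route is simply to invoke \cite[Theorem~2.4]{CaReTr08SIOPT}, but if one wishes to reproduce the argument, the structure is as follows.

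First, I would reformulate the state equation as a fixed-point/implicit equation. Define
\begin{equation*}
    F\colon L^2(\Omega)\times\big(H^1_0(\Omega)\cap C(\bar\Omega)\big)\longrightarrow H^{-1}(\Omega)\cap M(\bar\Omega),
    \qquad F(u,y)=\mA y+f(\cdot,y)-u,
\end{equation*}
where the target space is chosen so that $F(u,y)=0$ is equivalent to $y=G(u)$. By Theorem~\ref{ThmExSoStEq}, for each $u\in L^2(\Omega)$ there is a unique $y=G(u)$ with $F(u,G(u))=0$. I would verify that $F$ is of class $\mC^2$; the linear term $\mA y-u$ is obvious, while the Nemytskii operator $y\mapsto f(\cdot,y)$ from $H^1_0\cap C(\bar\Omega)$ into its target is twice continuously differentiable thanks to the $\mC^2$ assumption on $f(x,\cdot)$, the uniform bound on $\partial^2_y f$ on bounded $y$-sets from \textbf{(A1)}, and in particular the Lipschitz bound on $\partial^2_y f$ which gives continuity of the second derivative.

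Next, I would verify the invertibility hypothesis needed for the implicit function theorem. The partial derivative is the linear operator $z\mapsto \mA z+\tfrac{\partial f}{\partial y}(\cdot,y)z$; since $\partial_y f(\cdot,y)\geq 0$ by \textbf{(A1)} and $\mA$ is coercive by \textbf{(A3)}, the Lax--Milgram theorem gives unique solvability in $H^1_0(\Omega)$ for right-hand sides in $H^{-1}(\Omega)$, and $L^\infty$--regularity arguments as in Theorem~\ref{ThmExSoStEq} upgrade this to $H^1_0(\Omega)\cap C(\bar\Omega)$ when the data lie in $L^2(\Omega)$. Hence $\partial_y F(u,G(u))$ is a topological isomorphism from $H^1_0\cap C(\bar\Omega)$ to the chosen target, and the implicit function theorem yields that $G$ is of class $\mC^2$ in a neighborhood of every $u\in L^2(\Omega)$.

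To extract the derivative formulas, I would differentiate the identity $F(u,G(u))=0$ in $u$. The first derivative gives $\partial_u F(u,G(u))+\partial_y F(u,G(u))\circ G'(u)=0$, which written out reads $\mA (G'(u)v)+\tfrac{\partial f}{\partial y}(\cdot,G(u))(G'(u)v)=v$, i.e., $z_{u,v}=G'(u)v$ solves \eqref{EqSolZuv}. Differentiating once more in the direction $(v_1,v_2)$ and using that $\partial_u F$ is linear in $u$ (so its second derivative contributes nothing) produces
\begin{equation*}
    \mA\big(G''(u)(v_1,v_2)\big)+\frac{\partial f}{\partial y}(\cdot,G(u))\big(G''(u)(v_1,v_2)\big)
    +\frac{\partial^2 f}{\partial y^2}(\cdot,G(u))\,z_{u,v_1}\,z_{u,v_2}=0,
\end{equation*}
which is exactly \eqref{EqSolSeGvv}.

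The main obstacle, and the only nontrivial analytic point, is choosing the right functional-analytic setting so that $F$ is genuinely $\mC^2$ and $\partial_y F$ is an isomorphism simultaneously; this is delicate because the Nemytskii operator $f(\cdot,\cdot)$ needs the $C(\bar\Omega)$ component of the domain to be differentiable, while invertibility of $\partial_y F$ requires a compatible codomain that still accommodates $L^2$ right-hand sides. Once the functional framework matches that of \cite[Section~2]{CaReTr08SIOPT}, the remaining computations are routine, and citing \cite[Theorem~2.4]{CaReTr08SIOPT} closes the proof immediately.
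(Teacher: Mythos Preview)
Your proposal is correct and matches the paper's approach exactly: the paper's proof consists solely of the sentence ``The assertions of the theorem are deduced from \cite[Theorem~2.4]{CaReTr08SIOPT},'' which you explicitly identify as the cleanest route. The implicit function theorem sketch you add is the standard argument underlying that cited result and is consistent with it, so there is no divergence.
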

\begin{proof}
The assertions of the theorem are deduced from
\cite[Theorem~2.4]{CaReTr08SIOPT}. $\hfill\Box$
\end{proof}

\medskip
More details related to weak solutions of the state equations
\eqref{AuxStaEq} as well as \eqref{StateEq} can be found in
\cite[Chapter~4]{Trolt10B}. By Theorem~\ref{Thm24CRT}, we denote the
space containing weak solutions of \eqref{AuxStaEq} by
$Y:=H^1_0(\Omega)\cap C(\bar\Omega)$ endowed with the norm
$$\|y\|_Y=\|y\|_{H^1_0(\Omega)}+\|y\|_{L^\infty(\Omega)}.$$
The forthcoming theorem provide us with formulas for computing the
first and second-order directional derivatives of the cost
functional of the control problem \eqref{KthCPrlm}.

\begin{Theorem}\label{ThmFSdDerJ}
Assume that the assumptions {\rm\textbf{(A1)}--\textbf{(A3)}} hold.
For every $k\in\{1,\ldots,m\}$, the cost functional
$J_k:\mathbf{L}^2(\Omega)\to\R$ is of class $\mC^2$. Moreover, for
every $(u_1,\ldots,u_m)\in\mathbf{L}^2(\Omega)$ and
$h,h_1,\ldots,h_m\in L^2(\Omega)$, the first and second derivatives
of $J_k(\cdot)$ are given by
\begin{equation}\label{FOdrDrtv}
    \nabla_{u_k}J_k(u_k,u_{-k})h=\int_\Omega(\zeta_ku_k+\varphi_{k,u})hdx,
\end{equation}
and
\begin{equation}\label{}
\begin{aligned}
    \nabla^2_{u_ku_j}J_k(u_1,\ldots,u_m)(h_k,h_j)
    &=\int_\Omega\left(\dfrac{\partial^2L_k}{\partial y^2}(x,y_u)
     -\varphi_{k,u}\dfrac{\partial^2f}{\partial y^2}(x,y_u)\right)z_{u,h_k}z_{u,h_j}dx\\
    &+\int_\Omega\chi_{\{k\}}(j)\zeta_kh_kh_jdx,
\end{aligned}
\end{equation}
where $u=\sum_{i=1}^mB_iu_i$, $y_u=G(u)$, $z_{u,h_i}=G'(u)h_i$ for
$i\in\{k,j\}$, and $\varphi_{k,u}\in W^{2,2}(\Omega)$ is the adjoint
state of $y_u$ defined as the unique  weak solution of
$$\begin{cases}
\begin{aligned}
    \mA^*\varphi+\dfrac{\partial f}{\partial y}(x,y_u)\varphi
                     &=\dfrac{\partial L_k}{\partial y}(x,y_u)\ &&\mbox{in}\ \Omega\\
             \varphi &=0                                        &&\mbox{on}\ \Gamma
\end{aligned}
\end{cases}$$
with $\mA^*$ being the adjoint operator of $\mA$.
\end{Theorem}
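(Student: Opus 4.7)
The plan is to split $J_k$ into a tracking part and a Tikhonov part. Define the bounded linear operator $\mathcal{B}\colon\mathbf{L}^2(\Omega)\to L^2(\Omega)$ by $\mathcal{B}\mathbf{u}=\sum_{i=1}^mB_iu_i$, and the Nemytskii-type integral functional $\Phi_k\colon Y\to\R$ by $\Phi_k(y)=\int_\Omega L_k(x,y(x))\,dx$. With $G$ the control-to-state operator from Theorem~\ref{Thm24CRT}, one then writes $J_k(\mathbf{u})=\Phi_k\circ G\circ\mathcal{B}(\mathbf{u})+\Psi_k(\mathbf{u})$, where $\Psi_k(\mathbf{u})=\tfrac12\int_\Omega\zeta_ku_k^2\,dx$. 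The $C^2$ regularity of $J_k$ and its derivative formulas then follow from the chain rule, provided one verifies $\Phi_k\in\mathcal{C}^2(Y,\R)$, since $\mathcal{B}$ is linear bounded and $G\in\mathcal{C}^2$ by Theorem~\ref{Thm24CRT}.

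For the $\mathcal{C}^2$ regularity of $\Phi_k$, I would rely on assumption \textbf{(A2)}: using the embedding $Y\hookrightarrow C(\bar\Omega)$, any bounded set in $Y$ is mapped into an $L^\infty$-ball, on which the local growth bounds on $\partial L_k/\partial y$, $\partial^2L_k/\partial y^2$ and the Lipschitz estimate on $\partial^2L_k/\partial y^2$ become uniform. Standard Nemytskii calculus then gives the candidate derivatives $\Phi_k'(y)h=\int_\Omega\tfrac{\partial L_k}{\partial y}(x,y)h\,dx$ and $\Phi_k''(y)(h_1,h_2)=\int_\Omega\tfrac{\partial^2L_k}{\partial y^2}(x,y)h_1h_2\,dx$, together with continuity of $y\mapsto\Phi_k''(y)$. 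The quadratic term $\Psi_k$ is trivially $\mathcal{C}^2$ with $\nabla_{u_k}\Psi_k(\mathbf{u})h=\int_\Omega\zeta_ku_khdx$ and $\nabla^2_{u_ku_j}\Psi_k(\mathbf{u})(h_k,h_j)=\chi_{\{k\}}(j)\int_\Omega\zeta_kh_kh_j\,dx$.

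Applying the chain rule to the composition $\Phi_k\circ G\circ\mathcal{B}$ and using Theorem~\ref{Thm24CRT}, the raw first derivative reads
\begin{equation*}
\nabla_{u_k}(\Phi_k\circ G\circ\mathcal{B})(\mathbf{u})h=\int_\Omega\tfrac{\partial L_k}{\partial y}(x,y_u)\,z_{u,h}\,dx,
\end{equation*}
with $z_{u,h}$ the linearized state driven by $B_kh$. To convert this to the stated form, I would introduce the adjoint state $\varphi_{k,u}$ of the theorem and test the adjoint equation against $z_{u,h}$ while testing the linearized state equation against $\varphi_{k,u}$; the symmetry of $\mathcal{A},\mathcal{A}^*$ in the duality pairing eliminates the principal parts and yields $\int_\Omega\tfrac{\partial L_k}{\partial y}(x,y_u)z_{u,h}\,dx=\int_\Omega\varphi_{k,u}\,h\,dx$, giving \eqref{FOdrDrtv}. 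For the second derivative, the chain rule produces two contributions: one from $\Phi_k''(y_u)(z_{u,h_k},z_{u,h_j})$ and one from $\Phi_k'(y_u)\cdot G''(\mathcal{B}\mathbf{u})(B_kh_k,B_jh_j)$. In the second contribution I would again test the adjoint equation against the second-order linearized state $z_{v_1v_2}$ from \eqref{EqSolSeGvv}; this replaces $\tfrac{\partial L_k}{\partial y}$ by $\varphi_{k,u}$ and turns the right-hand side of \eqref{EqSolSeGvv} into the mixed $\tfrac{\partial^2f}{\partial y^2}$-term, yielding exactly the stated formula after adding $\nabla^2_{u_ku_j}\Psi_k$.

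The existence and regularity of the adjoint state $\varphi_{k,u}\in W^{2,2}(\Omega)$ follows from standard elliptic theory applied to the linear equation with $L^2$ right-hand side $\tfrac{\partial L_k}{\partial y}(\cdot,y_u)$ (bounded thanks to \textbf{(A2)} and Theorem~\ref{ThmExSoStEq}) and bounded, nonnegative zeroth-order coefficient $\tfrac{\partial f}{\partial y}(\cdot,y_u)$ by \textbf{(A1)}. The main technical obstacle is ensuring that the second-derivative chain-rule identity and the adjoint tests are fully justified in the weak sense on $Y$: one must check that $z_{u,h}\in Y$ lies in the correct duality class to use $\varphi_{k,u}$ as a test function and that $\Phi_k''$ is genuinely the Fr\'echet (not just G\^ateaux) second derivative, which reduces to continuity of the Nemytskii operator $y\mapsto\tfrac{\partial^2L_k}{\partial y^2}(\cdot,y)$ as a map into $L^\infty(\Omega)$ using the local Lipschitz bound in \textbf{(A2)}.
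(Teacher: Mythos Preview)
Your approach is correct and is exactly the standard argument underlying the references the paper invokes; the paper's own proof consists solely of the citation ``It follows from \cite[Theorem~2.6 and Remark~2.8]{CaReTr08SIOPT}; see also \cite[Chapter~4]{Trolt10B}'', and what you have written is a faithful unpacking of that argument (Nemytskii regularity of $\Phi_k$ on $Y$, chain rule through $G\circ\mathcal{B}$, and the adjoint duality trick for both the first- and second-order terms).

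One minor bookkeeping slip: since, as you correctly note, the linearized state in the $u_k$-direction is driven by $B_kh$, the duality identity should read $\int_\Omega\tfrac{\partial L_k}{\partial y}(x,y_u)\,z_{u,h}\,dx=\int_\Omega\varphi_{k,u}\,B_kh\,dx$, and likewise the second-order chain rule produces $z_{u,B_kh_k}$, $z_{u,B_jh_j}$ rather than $z_{u,h_k}$, $z_{u,h_j}$. The paper's stated formulas suppress these $B_i$ factors as well, so you are matching the statement as written, but keep track of them if you carry the computation through explicitly.
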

\begin{proof}
It follows from \cite[Theorem~2.6 and Remark~2.8]{CaReTr08SIOPT};
see also \cite[Chapter~4]{Trolt10B}. $\hfill\Box$
\end{proof}

\begin{Theorem}\label{ThmExSlOP}
Assume that the assumptions~{\rm\textbf{(A1)}--\textbf{(A3)}} hold.
For every $k\in\{1,\ldots,m\}$, the optimal control problem
\eqref{KthCPrlm} has at least one solution.
\end{Theorem}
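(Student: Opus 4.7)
I would prove this theorem by the direct method of the calculus of variations, exploiting the weak compactness of $\mU^k_{ad}$ together with the strong continuity result from Theorem~\ref{ThmExSoStEq}.

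Fix $k\in\{1,\ldots,m\}$ and (as the problem \eqref{KthCPrlm} suggests) regard $u_{-k}$ as fixed elements of the respective admissible sets; the argument for the joint problem would be identical. First I would observe that $J_k$ is bounded below on $\mU^k_{ad}$. Indeed, for $u_k\in\mU^k_{ad}$ the control $u=\sum_{i=1}^m B_iu_i$ lies in a bounded subset of $L^2(\Omega)$, so Theorem~\ref{ThmExSoStEq} provides $M_k>0$ with $\|y_u\|_{C(\bar\Omega)}\leq M_k$. Assumption \textbf{(A2)} then yields $|L_k(x,y_u(x))|\leq |L_k(x,0)|+\psi_{k,M_k}(x)M_k\in L^1(\Omega)$, and the Tikhonov term is nonnegative since $\zeta_k\geq\zeta_{0k}>0$. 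Hence $\inf_{\mU^k_{ad}}J_k>-\infty$.

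Next I would take a minimizing sequence $\{u_k^n\}\subset\mU^k_{ad}$. Since $\mU^k_{ad}$ is nonempty, convex, closed, and bounded in the Hilbert space $L^2(\Omega)$, it is weakly sequentially compact, so (along a subsequence) $u_k^n\rightharpoonup\ou_k\in\mU^k_{ad}$. Writing $u^n=B_ku_k^n+\sum_{i\neq k}B_iu_i$, one gets $u^n\rightharpoonup\ou$ weakly in $L^2(\Omega)$ (the multiplication by $B_k\in L^\infty(\Omega)$ is weakly continuous). Invoking the last assertion of Theorem~\ref{ThmExSoStEq}, $y_{u^n}\to y_{\ou}$ strongly in $H^1_0(\Omega)\cap C(\bar\Omega)$. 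The uniform bound $\|y_{u^n}\|_{C(\bar\Omega)}\leq M_k$ and the growth conditions in \textbf{(A2)} then allow Lebesgue's dominated convergence theorem to give
\begin{equation*}
    \int_\Omega L_k(x,y_{u^n}(x))dx\longrightarrow\int_\Omega L_k(x,y_{\ou}(x))dx.
\end{equation*}
For the Tikhonov term, the functional $u_k\mapsto\frac{1}{2}\int_\Omega\zeta_k u_k^2dx$ is convex and continuous on $L^2(\Omega)$ (since $\zeta_k\in L^\infty(\Omega)$ is positive), hence weakly lower semicontinuous, giving $\liminf\frac{1}{2}\int_\Omega\zeta_k(u_k^n)^2dx\geq\frac{1}{2}\int_\Omega\zeta_k\ou_k^2dx$.

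Combining the two displays,
\begin{equation*}
    J_k(\ou_k,u_{-k})\leq\liminf_{n\to\infty}J_k(u_k^n,u_{-k})=\inf_{u_k\in\mU^k_{ad}}J_k(u_k,u_{-k}),
\end{equation*}
so $\ou_k$ is a solution of \eqref{KthCPrlm}. The only nontrivial point is the passage to the limit in the state-dependent term; this is precisely where the strong convergence $y_{u^n}\to y_{\ou}$ from Theorem~\ref{ThmExSoStEq} (which converts the weak convergence of controls into strong convergence of states thanks to the compact embedding implicit in the semilinear theory) is essential, since $L_k$ need not be convex in $y$.
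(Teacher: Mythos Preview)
Your proof is correct and follows precisely the direct method of the calculus of variations that the paper invokes by reference to \cite[Theorem~4.15]{Trolt10B}: weak sequential compactness of $\mU^k_{ad}$, the compactness of the control-to-state map from Theorem~\ref{ThmExSoStEq} to pass the nonconvex integrand to the limit, and weak lower semicontinuity of the convex Tikhonov term. The only minor imprecision is that the bound \eqref{EstSolEqSt} in Theorem~\ref{ThmExSoStEq} is stated for $u\in\mU^k_{ad}$ rather than for $u=\sum_iB_iu_i$ with $u_k\in\mU^k_{ad}$ and $u_{-k}$ fixed, but the underlying estimate from \cite[Theorem~2.1]{CaReTr08SIOPT} applies to any bounded subset of $L^2(\Omega)$, so this is harmless.
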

\begin{proof}
Arguing similarly to the proof of \cite[Theorem~4.15]{Trolt10B}, we
obtain the assertion of the theorem; see also
\cite[Theorem~2.2]{CaReTr08SIOPT} for the related result.
$\hfill\Box$
\end{proof}

\subsection{Material from variational analysis}

This subsection recalls some concepts and facts of variational
analysis taken from \cite{Mor06Ba}; see also \cite{RoWe98B}. Unless
otherwise stated, every reference norm in a product normed space is
the sum norm. Let us denote the open ball of center $u\in X$ and
radius $\rho>0$ in a Banach space $X$ by $B_\rho(u)$, and
$\oB_\rho(u)$ is the corresponding closed ball. Let
$F:X\rightrightarrows W$ be a multifunction between Banach spaces.
The set $\gph F:=\{(u,v)\in X\times W\st v\in F(u)\}$ is the graph
of $F$. We say that $F$ is locally closed around
$\bar\omega:=(\ou,\ov)\in\gph F$ if the graph of $F$ is locally
closed around $\bar\omega$, i.e., there is a closed ball
$\oB_\rho(\bar\omega)$ such that $\oB_\rho(\bar\omega)\cap\gph F$ is
closed in $X\times W$. The \emph{sequential Painlev\'e-Kuratowski
outer/upper limit} of $\Phi:X\rightrightarrows X^*$ as $u\to \ou$ is
defined by
\begin{equation}\label{OtrLimit}
\begin{aligned}
    \displaystyle\Limsup_{u\to\ou}\Phi(u)=\Big\{
    &u^*\in X^*\Bst\mbox{there exist}~u_n\to\ou~\mbox{and}~u^*_n\stackrel{w^*}\rightharpoonup u^*~\mbox{with}\\
    &u^*_n\in\Phi(u_n)~\mbox{for every}~n\in\N:=\{1,2,\dots\}\Big\}.
\end{aligned}
\end{equation}
Let $\phi:X\to\oR$ be a proper extended-real-valued function on an
\emph{Asplund} space $X$ \cite{Asp68AM}; see
\cite{Mor06Ba,Mor06Bb,Phelp93B} for more applications of Asplund
spaces. Assume that $\phi$ is lower semicontinuous around $\ou$,
where $\ou\in\dom\phi:=\{u\in X\st\phi(u)<\infty\}$. The
\emph{regular subdifferential} of $\phi$ at $\ou\in\dom\phi$ is
\begin{equation}\label{RegSubdif}
    \widehat{\partial}\phi(\ou)=\bigg\{u^*\in X^*\Bst\liminf_{u\to\ou}
    \frac{\phi(u)-\phi(\ou)-\langle u^*,u-\ou\rangle}{\|u-\ou\|}\geq0\bigg\}.
\end{equation}
This concept is also known as the \emph{subdifferential in the sense
of viscosity solutions}; see \cite{CrLi83TAMS,CrEvLi84TAMS}. The
\emph{limiting subdifferential} (known also as \emph{Mordukhovich
subdifferential}) of $\phi$ at $\ou$ is defined via the
Painlev\'e-Kuratowski sequential outer limit \eqref{OtrLimit} by
\begin{equation}\label{LimSubdif}
    \partial\phi(\ou)=\Limsup_{u\stackrel{\phi}\to\ou}\widehat{\partial}\phi(u),
\end{equation}
where the notation $u\stackrel{\phi}\to\ou$ means that $u\to\ou$
with $\phi(u)\to\phi(\ou)$.

For a subset $\Theta\subset X$ locally closed around $\ou\in\Theta$,
the \emph{regular and limiting normal cones} to $\Theta$ at
$\ou\in\Theta$ are respectively defined by
\begin{equation}\label{RgLmtgNrCn}
    \widehat{N}(\ou;\Theta)=\widehat{\partial}\delta(\ou;\Theta)
    \quad\mbox{and}\quad N(\ou;\Theta)=\partial\delta(\ou;\Theta),
\end{equation}
where $\delta(\cdot;\Theta)$ is the indicator function of $\Theta$
defined by $\delta(u;\Theta)=0$ for $u\in\Theta$ and
$\delta(u;\Theta)=\infty$ otherwise. The \emph{regular} and
\emph{Mordukhovich coderivatives} of the multifunction
$F:X\rightrightarrows W$ at the point $(\ou,\ov)\in\gph F$ are
respectively the multifunction
$\widehat{D}^*F(\ou,\ov):W^*\rightrightarrows X^*$ defined by
$$\widehat{D}^*F(\ou,\ov)(v^*)=\big\{u^*\in X^*\bst(u^*,-v^*)\in\widehat{N}\big((\ou,\ov);\gph F\big)\big\},~\forall
  v^*\in W^*,$$
and the multifunction $D^*F(\ou,\ov):W^*\rightrightarrows X^*$ given
by
$$D^*F(\ou,\ov)(v^*)=\big\{u^*\in X^*\bst(u^*,-v^*)\in N\big((\ou,\ov);\gph F\big)\big\},~\forall v^*\in W^*.$$
Given any $\ou^*\in\partial\phi(\ou)$, following
\cite{MorNgh14SIOPT,MorNgh13NA} the \emph{combined second-order
subdifferential} of $\phi$ at $\ou$ relative to $\ou^*$ is the
multifunction
$\breve{\partial}^2\phi(\ou,\ou^*):X^{**}\rightrightarrows X^*$ with
the values
\begin{equation}\label{CSOdrSbdif}
    \breve{\partial}^2\phi(\ou,\ou^*)(u)=(\widehat{D}^*\partial\phi)(\ou,\ou^*)(u),~\forall u\in X^{**}.
\end{equation}
Note that for $\phi\in\mC^2$ around $\ou$ with
$\ou^*=\nabla\phi(\ou)$ we have
$\breve{\partial}^2\phi(\ou,\ou^*)(u)=\{\nabla^2\phi(\ou)u\}$ for
all $u\in X^{**}$ via the symmetric Hessian operator
$\nabla^2\phi(\ou)$.

We say that the multifunction $F:X\rightrightarrows W$ is
\emph{locally Lipschitz-like}, or $F$ has the \emph{Aubin property}
\cite{DontRock09B}, around a point $(\ou,\ov)\in\gph F$ if there
exist $\ell>0$ and neighborhoods $U$ of $\ou$, $V$ of $\ov$ such
that
$$F(u_1)\cap V\subset F(u_2)+\ell\|u_1-u_2\|\oB_W,~\forall u_1,u_2\in U,$$
where $\oB_W$ denotes the closed unit ball in $W$. Characterization
of this property via the mixed Mordukhovich coderivative of $F$ can
be found in \cite[Theorem~4.10]{Mor06Ba}.

\section{Existence of variational Nash equilibriums}
\setcounter{equation}{0}

In this section, under the standard assumptions
{\rm\textbf{(A1)}--\textbf{(A3)}} we will prove the existence of
variational Nash equilibriums to the system \eqref{KthPVI}
associated to the optimal control problems given in
\eqref{KthCPrlm}. This result will lead to the existence of
variational Nash equilibriums to the parametric system
\eqref{PerKthPVI} associated to the optimal control problems given
in \eqref{PrKthPrlm}. These variational Nash equilibriums are also
local Nash equilibriums in the classical sense when the optimal
control problems in question are all convex.

\begin{Theorem}\label{ThmExNshEq}
Assume that the assumptions {\rm\textbf{(A1)}--\textbf{(A3)}} hold.
There exists a variational Nash equilibrium
\begin{equation}\label{}
    \mathbf{\ou}=(\ou_1,\ldots,\ou_m)\in\mU_{ad}^1\times\cdots\times\mU_{ad}^m
\end{equation}
to the system
\begin{equation}\label{K1mthPVI}
    0\in\nabla_{u_k}J_k(\ou_k,\ou_{-k})+N(\ou_k;\mU_{ad}^k),\ \forall k=1,\ldots,m.
\end{equation}
\end{Theorem}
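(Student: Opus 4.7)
The plan is to recast the system \eqref{K1mthPVI} as a fixed-point equation and apply the Schauder--Tychonoff theorem. By Theorem~\ref{ThmFSdDerJ}, the partial gradient is explicit: $\nabla_{u_k}J_k(u_k,u_{-k}) = \zeta_k u_k + \varphi_{k,u}$ as an element of $L^2(\Omega)$, where $u=\sum_{i=1}^m B_i u_i$ and $\varphi_{k,u}$ is the corresponding adjoint state. For each $\mathbf{v}=(v_1,\ldots,v_m)\in\mU_{ad}$ with $v=\sum_i B_i v_i$, I define $T(\mathbf{v})=(u_1,\ldots,u_m)$ where each $u_k\in\mU^k_{ad}$ is the unique solution of
$$0\in\zeta_k u_k + \varphi_{k,v} + N(u_k;\mU^k_{ad}),\qquad k=1,\ldots,m.$$
This is the first-order condition of the strongly convex quadratic problem $\min_{u_k\in\mU^k_{ad}}\bigl\{\tfrac12\int_\Omega\zeta_k u_k^2\,dx+\int_\Omega\varphi_{k,v}u_k\,dx\bigr\}$; since $\zeta_k\geq\zeta_{0k}>0$ and $\mU^k_{ad}$ is nonempty, closed, convex, the minimizer exists and is unique. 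In view of Theorem~\ref{ThmFSdDerJ}, any fixed point $\mathbf{\ou}=T(\mathbf{\ou})$ is exactly a solution of \eqref{K1mthPVI}.

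The crucial step is to show that $T$ is sequentially weak-to-strong continuous on $\mU_{ad}$. If $\mathbf{v}^n\rightharpoonup\mathbf{v}$ weakly in $\mathbf{L}^2(\Omega)$, then $v^n\rightharpoonup v$ weakly in $L^2(\Omega)$, and Theorem~\ref{ThmExSoStEq} gives $y_{v^n}\to y_v$ strongly in $Y$. Assumptions \textbf{(A1)} and \textbf{(A2)} imply that $(\partial f/\partial y)(\cdot,y_{v^n})\to(\partial f/\partial y)(\cdot,y_v)$ and $(\partial L_k/\partial y)(\cdot,y_{v^n})\to(\partial L_k/\partial y)(\cdot,y_v)$ in an appropriate $L^p$ sense, so passing to the limit in the adjoint equation of Theorem~\ref{ThmFSdDerJ} and using standard elliptic estimates yields $\varphi_{k,v^n}\to\varphi_{k,v}$ strongly in $L^2(\Omega)$. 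The resolvent map $\varphi\mapsto u_k$ defined by $0\in\zeta_k u_k+\varphi+N(u_k;\mU^k_{ad})$ is $\zeta_{0k}^{-1}$-Lipschitz on $L^2(\Omega)$ (by strong monotonicity of the operator $\zeta_k I + N(\cdot;\mU^k_{ad})$), and therefore $T(\mathbf{v}^n)\to T(\mathbf{v})$ strongly in $\mathbf{L}^2(\Omega)$.

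To conclude, $\mU_{ad}$ is convex, bounded and closed in the reflexive Hilbert space $\mathbf{L}^2(\Omega)$, hence weakly compact; equipped with the weak topology it is a convex compact Hausdorff subset of a locally convex space. The map $T$ sends $\mU_{ad}$ into itself and is weak-to-weak continuous (weak-to-strong continuity being stronger), so the Schauder--Tychonoff fixed-point theorem furnishes $\mathbf{\ou}\in\mU_{ad}$ with $T(\mathbf{\ou})=\mathbf{\ou}$, which by construction is the desired variational Nash equilibrium.

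The step I expect to require the most care is the weak-to-strong continuity of $\mathbf{v}\mapsto(\varphi_{1,v},\ldots,\varphi_{m,v})$: it hinges on the smoothing built into Theorem~\ref{ThmExSoStEq} (compactness of the control-to-state map from weak $L^2$ to strong $Y$), the uniform bounds and Lipschitz dependence in \textbf{(A1)}--\textbf{(A2)}, and well-posedness of the linear adjoint elliptic equation in the limit. The rest of the argument (well-definedness of $T$, translation between fixed points and Nash equilibria, invocation of the fixed-point theorem) is essentially formal once this continuity is in hand.
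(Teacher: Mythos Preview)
Your argument is correct, but it follows a genuinely different route from the paper's. The paper writes the system as $0\in T(\mathbf{u})+C(\mathbf{u})$ with $T(\mathbf{u})=\zeta\odot\mathbf{u}+N(\mathbf{u};\mU_{ad})$ maximal monotone (hence $m$-accretive in the Hilbert setting) and $C(\mathbf{u})=\varphi_{\mathbf{u}}$ compact, and then invokes a perturbation result of Kartsatos \cite{Kar96TAMS} to conclude $0\in R(T+C)$. You instead solve, for each frozen $\mathbf{v}$, the strongly convex subproblems $0\in\zeta_k u_k+\varphi_{k,v}+N(u_k;\mU^k_{ad})$ to build a self-map of $\mU_{ad}$, show it is weak-to-strong continuous, and apply Schauder--Tychonoff on the weakly compact convex set $\mU_{ad}$. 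Both proofs ultimately rest on the \emph{same} compactness ingredient---complete continuity of $\mathbf{v}\mapsto\varphi_{\mathbf{v}}$, which follows from Theorem~\ref{ThmExSoStEq} and the adjoint equation exactly as you outline. Your approach is more elementary and self-contained (no need to appeal to the $m$-accretive machinery of \cite{Kar96TAMS}), and it makes the role of the positive weight $\zeta_k\geq\zeta_{0k}>0$ transparent through the $\zeta_{0k}^{-1}$-Lipschitz resolvent estimate; the paper's approach, in turn, packages the argument into a single abstract existence principle that generalizes more readily to other perturbation structures.
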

\begin{proof}
Let us put $\mathbf{X}=\mathbf{L}^2(\Omega)$ and put
$\zeta=(\zeta_1,\ldots,\zeta_m)$. For every
$\mathbf{u}=(u_1,\ldots,u_m)\in\mathbf{X}$, we denote
$\zeta\odot\mathbf{u}=(\zeta_1u_1,\ldots,\zeta_mu_m)$ and
$\varphi_{\mathbf{u}}=(\varphi_{1,u},\ldots,\varphi_{m,u})$, where
$u=\sum_{i=1}^mB_iu_i$ and $\varphi_{k,u}$ is the adjoint state of
$y_u$ with respect to the $k$-th control problem in
\eqref{KthCPrlm}. We now define the map
\begin{equation}\label{}
\begin{aligned}
    T:\mathbf{X}&\rightrightarrows\mathbf{X}\\
      \mathbf{u}&\mapsto\zeta\odot\mathbf{u}+N(\mathbf{u};\mU_{ad}).
\end{aligned}
\end{equation}
Denote $D(T)=\{\mathbf{u}\in\mathbf{X}\st
T(\mathbf{u})\neq\emptyset\}$ and choose the set
$\mG\equiv\mathbf{X}$. Then, $D(T)\equiv\mU_{ad}$ and the operator
$C$ defined by
\begin{equation}\label{}
\begin{aligned}
    C:\cl\,\mG\equiv\mathbf{X}&\to\mathbf{X}\\
      \mathbf{u}&\mapsto\varphi_{\mathbf{u}}
\end{aligned}
\end{equation}
is compact. Since $C$ is a compact operator, $C$ is completely
continuous (i.e., $C$ maps weakly convergent sequences into strongly
convergent sequences). In addition, we observe that $T$ is a maximal
monotone operator. Hence, $T$ is accretive and $R(T+\lambda
I)=\mathbf{X}$ for every $\lambda>0$ by
\cite[Corollary~3.7]{Phelp97EM}, where $I$ denotes the identity
operator on $\mathbf{X}$ which can be regarded as a duality mapping
and $R(T+\lambda I)$ stands for the range of $T+\lambda I$. Thus,
$T$ is $m$-accretive; see definitions of accretive/$m$-accretive
properties in \cite{Kar96TAMS}. Since $\mG\equiv\mathbf{X}$, the
conditions
\begin{equation}\label{}
    D(T)\cap\mG\neq\emptyset\quad\mbox{and}\quad\big(I-(T+C)\big)\big(D(T)\cap\partial\mG\big)\subset\cl\,\mG
\end{equation}
are trivial. Of course, $\mathbf{X}$ is a uniformly convex space.
Therefore, summarizing the above and applying
\cite[Theorem~1]{Kar96TAMS} we deduce that $0\in R(T+C)$, i.e.,
there exists $\mathbf{\ou}\in\mathbf{X}$ such that
\begin{equation}\label{}
    0\in T(\mathbf{\ou})+C(\mathbf{\ou}),
\end{equation}
or, equivalently, as follows
\begin{equation}\label{}
    0\in\zeta\odot\mathbf{\ou}+\varphi_{\mathbf{\ou}}+N(\mathbf{\ou};\mU_{ad}),
\end{equation}
which yields \eqref{K1mthPVI} due to
$\nabla_{u_k}J_k(\ou_k,\ou_{-k})=\zeta_k\ou_k+\varphi_{k,\ou}$ for
every $k=1,\ldots,m$. $\hfill\Box$
\end{proof}

\medskip
Based on Theorem~\ref{ThmExNshEq}, the forthcoming theorem provides
us with an existence result of local Nash equilibriums in the
classical sense to the equilibrium problems associated to the
optimal control problems \eqref{KthCPrlm} for the convex setting.

\begin{Theorem}\label{ThmCvNshEq}
Assume that the assumptions {\rm\textbf{(A1)}--\textbf{(A3)}} hold
and that the optimal control problems \eqref{KthCPrlm} are all
convex optimization problems. There exists a classical Nash
equilibrium
\begin{equation}\label{}
    \mathbf{\ou}=(\ou_1,\ldots,\ou_m)\in\mU_{ad}^1\times\cdots\times\mU_{ad}^m
\end{equation}
to the equilibrium problem associated to the optimal control
problems \eqref{KthCPrlm}.
\end{Theorem}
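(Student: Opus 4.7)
The plan is to leverage Theorem~\ref{ThmExNshEq} to produce a variational Nash equilibrium, and then use the convexity hypothesis to upgrade it to a classical one. Since assumptions \textbf{(A1)}--\textbf{(A3)} are in force, Theorem~\ref{ThmExNshEq} directly supplies $\mathbf{\ou}=(\ou_1,\ldots,\ou_m)\in\mU_{ad}^1\times\cdots\times\mU_{ad}^m$ satisfying the variational system
$$0\in\nabla_{u_k}J_k(\ou_k,\ou_{-k})+N(\ou_k;\mU_{ad}^k),\quad\forall k=1,\ldots,m.$$
So I do not need to redo any existence-type argument; the content of this theorem lies entirely in turning the stationarity condition into a global minimization property.

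Next I would fix an arbitrary index $k\in\{1,\ldots,m\}$ and freeze the other variables at $\ou_{-k}$. By the convexity hypothesis on the $k$-th control problem, the partial map $u_k\mapsto J_k(u_k,\ou_{-k})$ is convex on the convex admissible set $\mU_{ad}^k$. Theorem~\ref{ThmFSdDerJ} further ensures that this partial map is of class $\mC^2$, so $\nabla_{u_k}J_k(\cdot,\ou_{-k})$ is a genuine G\^{a}teaux gradient. Under these two ingredients, the normal cone $N(\ou_k;\mU_{ad}^k)$ coincides with the convex-analytic normal cone, and the inclusion above is exactly the standard first-order optimality condition for convex minimization over a convex set. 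Such a condition is well known to be not only necessary but also sufficient: if $u_k\in\mU_{ad}^k$ is arbitrary, then by convexity
$$J_k(u_k,\ou_{-k})-J_k(\ou_k,\ou_{-k})\geq\bigl\langle\nabla_{u_k}J_k(\ou_k,\ou_{-k}),u_k-\ou_k\bigr\rangle_{L^2(\Omega)}\geq 0,$$
where the last inequality uses $-\nabla_{u_k}J_k(\ou_k,\ou_{-k})\in N(\ou_k;\mU_{ad}^k)$ and the definition of the convex normal cone.

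Concluding, for every $k$ the decision $\ou_k$ is a global minimizer of $J_k(\cdot,\ou_{-k})$ over $\mU_{ad}^k$, which is precisely the requirement \eqref{ClKthOpPr} for a classical Nash equilibrium. Hence $\mathbf{\ou}$ serves as the desired classical Nash equilibrium. There is no real obstacle here: the proof is essentially a one-line consequence of Theorem~\ref{ThmExNshEq} combined with the textbook fact that, in the convex differentiable setting, the variational inequality characterizes global minimizers. The only mild point of care is to ensure that the limiting normal cone $N(\ou_k;\mU_{ad}^k)$ used in \eqref{KthPVI} is interpreted as the convex-analytic normal cone, which is automatic because $\mU_{ad}^k$ is convex (see \cite{Mor06Ba}).
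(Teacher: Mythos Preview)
Your proposal is correct and follows essentially the same approach as the paper: invoke Theorem~\ref{ThmExNshEq} to obtain a variational Nash equilibrium, then use the convexity of each control problem to conclude that the first-order condition \eqref{K1mthPVI} yields a global minimizer in each component, hence a classical Nash equilibrium. The paper's own proof is simply more terse, stating the sufficiency of the first-order condition under convexity without writing out the gradient inequality explicitly.
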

\begin{proof}
According to Theorem~\ref{ThmExNshEq}, there exists a variational
Nash equilibrium
$$\mathbf{\ou}=(\ou_1,\ldots,\ou_m)\in\mU_{ad}^1\times\cdots\times\mU_{ad}^m$$
satisfying the system \eqref{K1mthPVI}. Since the control problems
\eqref{KthCPrlm} are all convex, the system of conditions
\eqref{K1mthPVI} implies that
\begin{equation}\label{}
    \ou_k\in\underset{u_k\in\mU^k_{ad}}\argmin J_k(u_k,\ou_{-k}),\
    \forall k=1,\ldots,m,
\end{equation}
This means that $\mathbf{\ou}$ is a classical Nash equilibrium
associated to the problems \eqref{KthCPrlm}. $\hfill\Box$
\end{proof}

\medskip
By applying the techniques given in the proof of
Theorem~\ref{ThmExNshEq}, we can deduce the existence of variational
Nash equilibriums to the  parametric system \eqref{PerKthPVI}.

\begin{Theorem}\label{ThmPaNshEq}
Let $\mathbf{\ou}^*=(\ou^*_1,\ldots,\ou^*_m)\in\mathbf{L}^2(\Omega)$
and let $\mathbf{\be}=(\be_Y,\be_1,\ldots,\be_m)\in\mathbf{E}$ be
such that
\begin{equation}\label{CdUadNEmty}
    \mU_{ad}(\mathbf{\be})=\mU_{ad}^1(\be_1)\times\cdots\times\mU_{ad}^m(\be_m)\neq\emptyset.
\end{equation}
Assume that the assumptions {\rm\textbf{(A1)}--\textbf{(A3)}} hold.
There exists a variational Nash equilibrium
\begin{equation}\label{}
    \mathbf{\ou}=(\ou_1,\ldots,\ou_m)\in\mU_{ad}^1(\be_1)\times\cdots\times\mU_{ad}^m(\be_m)
\end{equation}
to the parametric system
\begin{equation}\label{PaKthPVI}
    \ou^*_k\in\nabla_{u_k}\mJ_k(\ou_k,\ou_{-k},\be_Y,\be_k)+N(\ou_k;\mU_{ad}^k(\be_k)),\
    \forall k=1,\ldots,m,
\end{equation}
with respect to the parameters $(\mathbf{\ou}^*,\mathbf{\be})$.
\end{Theorem}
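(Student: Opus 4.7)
The plan is to mirror the proof of Theorem~\ref{ThmExNshEq} almost verbatim, replacing the unperturbed data by its parametric counterpart and then invoking \cite[Theorem~1]{Kar96TAMS}. Set $\mathbf{X}=\mathbf{L}^2(\Omega)$ and, with $\zeta=(\zeta_1,\dots,\zeta_m)$ acting componentwise, define
\begin{equation*}
    T:\mathbf{X}\rightrightarrows\mathbf{X},\qquad
    T(\mathbf{u})=\zeta\odot\mathbf{u}+N\big(\mathbf{u};\mU_{ad}(\mathbf{\be})\big).
\end{equation*}
Since each $\mU^k_{ad}(\be_k)$ is convex and closed in $L^2(\Omega)$, so is the product $\mU_{ad}(\mathbf{\be})$; thus $N(\cdot;\mU_{ad}(\mathbf{\be}))$ is maximal monotone, $\zeta\odot(\cdot)$ is bounded, linear, continuous and strongly monotone (because $\zeta_k\ge\zeta_{0k}>0$), so $T$ is maximal monotone, hence $m$-accretive by \cite[Corollary~3.7]{Phelp97EM}, with $D(T)=\mU_{ad}(\mathbf{\be})$. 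The hypothesis \eqref{CdUadNEmty} is exactly what is needed to make $D(T)$ nonempty.

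Next I would compute the parametric gradient via the adjoint approach used in Theorem~\ref{ThmFSdDerJ}. Writing $u=\sum_iB_iu_i$ and $y_{u+\be_Y}=G(u+\be_Y)$ for the solution of \eqref{PrStateEq}, a direct adaptation of Theorem~\ref{ThmFSdDerJ} yields
\begin{equation*}
    \nabla_{u_k}\mJ_k(u_k,u_{-k},\be_Y,\be_k)=\zeta_k u_k+\wv_{k,\mathbf{u},\mathbf{\be}},
\end{equation*}
where $\wv_{k,\mathbf{u},\mathbf{\be}}\in W^{2,2}(\Omega)\subset L^2(\Omega)$ is the unique weak solution of the adjoint equation obtained by differentiating $\mJ_k$, with right-hand side $\frac{\partial L_k}{\partial y}(x,y_{u+\be_Y})+\be_{k,J}$ and coefficient $\frac{\partial f}{\partial y}(x,y_{u+\be_Y})$. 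With this in hand I would set
\begin{equation*}
    C:\mathbf{X}\to\mathbf{X},\qquad
    C(\mathbf{u})=\big(\wv_{1,\mathbf{u},\mathbf{\be}},\dots,\wv_{m,\mathbf{u},\mathbf{\be}}\big)-\mathbf{\ou}^*,
\end{equation*}
so that $\ou^*_k\in\nabla_{u_k}\mJ_k(\ou_k,\ou_{-k},\be_Y,\be_k)+N(\ou_k;\mU_{ad}^k(\be_k))$ for all $k$ is equivalent to $0\in T(\mathbf{\ou})+C(\mathbf{\ou})$.

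The remaining work is to verify that $C$ is completely continuous. Given $\mathbf{u}^n\rightharpoonup\mathbf{u}$ weakly in $\mathbf{X}$, Theorem~\ref{ThmExSoStEq} applied to the auxiliary state equation governed by the shifted source $u^n+\be_Y$ gives $y_{u^n+\be_Y}\to y_{u+\be_Y}$ strongly in $Y=H^1_0(\Omega)\cap C(\bar\Omega)$; combining this with the $\mathcal{C}^1$ regularity of $\partial f/\partial y$ and $\partial L_k/\partial y$ in $y$ and a standard continuous-dependence argument for the linear adjoint equation (whose coefficient and right-hand side depend continuously on $y_{u^n+\be_Y}$ and $\be_{k,J}$), one obtains $\wv_{k,\mathbf{u}^n,\mathbf{\be}}\to\wv_{k,\mathbf{u},\mathbf{\be}}$ strongly in $L^2(\Omega)$, proving complete continuity of $C$. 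Taking $\mG=\mathbf{X}$, the conditions $D(T)\cap\mG\neq\emptyset$ and $(I-(T+C))(D(T)\cap\partial\mG)\subset\cl\,\mG$ are trivially satisfied, $\mathbf{X}$ is uniformly convex, so \cite[Theorem~1]{Kar96TAMS} gives $0\in R(T+C)$, which yields the desired $\mathbf{\ou}\in\mU_{ad}(\mathbf{\be})$ satisfying \eqref{PaKthPVI}. The only non-routine point is the complete continuity of $C$ in the presence of the extra parameters $\be_Y$ and $\be_{k,J}$, but this reduces to the compactness already encoded in Theorem~\ref{ThmExSoStEq} together with the compact embedding $Y\hookrightarrow L^2(\Omega)$.
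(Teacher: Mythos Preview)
Your proof is correct and follows essentially the same route as the paper: both define $T(\mathbf{u})=\zeta\odot\mathbf{u}+N(\mathbf{u};\mU_{ad}(\mathbf{\be}))$ and $C(\mathbf{u})=\varphi_{\mathbf{u},\mathbf{\be}}-\mathbf{\ou}^*$ (your $\wv$ is the paper's $\varphi$), verify that $T$ is $m$-accretive and $C$ is compact/completely continuous, and then invoke \cite[Theorem~1]{Kar96TAMS} with $\mG=\mathbf{X}$. The only difference is that you spell out the complete-continuity argument for $C$ in slightly more detail, whereas the paper simply asserts compactness and refers back to the proof of Theorem~\ref{ThmExNshEq}.
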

\begin{proof}
Let $\mathbf{X}=\mathbf{L}^2(\Omega)$ and let
$\zeta=(\zeta_1,\ldots,\zeta_m)$. For every
$\mathbf{u}=(u_1,\ldots,u_m)\in\mathbf{X}$, we denote
\begin{equation}\label{}
    \varphi_{\mathbf{u},\mathbf{\be}}=(\varphi_{1,u+\be_Y,\be_1},\ldots,\varphi_{m,u+\be_Y,\be_m}),
\end{equation}
where $u+\be_Y=\sum_{i=1}^mB_iu_i+\be_Y$, the state $y_{u+\be_Y}$ is
the weak solution of \eqref{PrStateEq}, and
$\varphi_{k,u+\be_Y,\be_k}$ is the adjoint state of $y_{u+\be_Y}$
with respect to the $k$-th parametric control problem in
\eqref{PrKthPrlm} that is the weak solution of the following
equation
\begin{equation}\label{}
\begin{cases}
\begin{aligned}
    \mA^*\varphi+\frac{\partial f}{\partial y}(x,y_{u+\be_Y})\varphi
            &=\frac{\partial L_k}{\partial y}(x,y_{u+\be_Y})+\be_{k,J}\ &&\mbox{in}\ \Omega\\
    \varphi &=0                                                         &&\mbox{on}\ \Gamma.
\end{aligned}
\end{cases}
\end{equation}
Then, we have $\nabla_{u_k}\mJ_k(u_k,u_{-k},\be_Y,\be_k)$ can be
represented as follows
\begin{equation}\label{}
    \nabla_{u_k}\mJ_k(u_k,u_{-k},\be_Y,\be_k)
    =\zeta_ku_k+\varphi_{k,u+\be_Y,\be_k},\
    \forall k=1,\ldots,m.
\end{equation}
We now define the map
\begin{equation}\label{}
\begin{aligned}
    T:\mathbf{X}&\rightrightarrows\mathbf{X}\\
      \mathbf{u}&\mapsto\zeta\odot\mathbf{u}+N(\mathbf{u};\mU_{ad}(\mathbf{\be})).
\end{aligned}
\end{equation}
Denote $D(T)=\{\mathbf{u}\in\mathbf{X}\st
T(\mathbf{u})\neq\emptyset\}$ and choose the set
$\mG\equiv\mathbf{X}$. Then, $D(T)\equiv\mU_{ad}(\mathbf{\be})$ and
the operator $C$ defined by
\begin{equation}\label{}
\begin{aligned}
    C:\cl\,\mG\equiv\mathbf{X}&\to\mathbf{X}\\
      \mathbf{u}&\mapsto\varphi_{\mathbf{u},\mathbf{\be}}-\mathbf{\ou}^*
\end{aligned}
\end{equation}
is a compact operator. Arguing similarly to the proof of
Theorem~\ref{ThmExNshEq} we obtain the assertion of the theorem.
$\hfill\Box$
\end{proof}

\begin{Remark}\rm
Similar to Theorem~\ref{ThmCvNshEq}, if the parametric optimal
control problems considered in Theorem~\ref{ThmPaNshEq} are all
convex, then variational Nash equilibriums to the parametric system
\eqref{PaKthPVI} are also local Nash equilibriums to the equilibrium
problem associated to the optimal control problems \eqref{PrKthPrlm}
with respect to the parameters $(\mathbf{\ou}^*,\mathbf{\be})$.
\end{Remark}

\section{Full stability of variational Nash equilibriums}
\setcounter{equation}{0}

In this section, we will investigate the Lipschitzian and
H\"olderian full stability for variational Nash equilibriums to the
system \eqref{KthPVI} via the parametric system \eqref{PerKthPVI}.
We will apply the results on full stability for parametric
variational systems provided in \cite{MorNgh16SIOPT} to establish
explicit characterizations of Lipschitzian and H\"olderian full
stability for parametric variational Nash equilibriums to the system
\eqref{PerKthPVI}. Related to the solution stability for the optimal
control problem \eqref{KthCPrlm}--\eqref{SOdrEOper} via the
perturbed problem \eqref{PrKthPrlm}--\eqref{NshAdSetK} with $m=1$
and $\zeta_1=0$, we refer the reader to \cite{QuiWch18OPTIM}. In the
setting of \cite{QuiWch18OPTIM}, stability for bang-bang optimal
controls was investigated by means of tools of the optimal control
theory.

To apply the stability results of \cite{MorNgh16SIOPT}, we need to
reformulate the parametric system \eqref{PerKthPVI} to a
corresponding parametric variational inequality \eqref{NshPVInq}
below. Let us define the operator
$F:\mathbf{L}^2(\Omega)\times\mathbf{E}\rightarrow\mathbf{L}^2(\Omega)$
by setting
\begin{equation}\label{FctnFue}
    F(\mathbf{u},\mathbf{e})
    =\big(\nabla_{u_1}\mJ_1(\mathbf{u},e_Y,e_1),\ldots,\nabla_{u_m}\mJ_m(\mathbf{u},e_Y,e_m)\big),\
    \forall(\mathbf{u},\mathbf{e})\in\mathbf{L}^2(\Omega)\times\mathbf{E}.
\end{equation}
Then, for each $\mathbf{e}\in\mathbf{E}$, we have
$F'_{\mathbf{u}}(\mathbf{u},\mathbf{e})(\cdot):\mathbf{L}^2(\Omega)\to\mathbf{L}^2(\Omega)$
(or equivalently, we can rewrite as follows $\langle
F'_{\mathbf{u}}(\mathbf{u},\mathbf{e})(\cdot),\cdot\rangle:
\mathbf{L}^2(\Omega)\times\mathbf{L}^2(\Omega)\to\R$) with
$$F'_{\mathbf{u}}(\mathbf{u},\mathbf{e})\mathbf{v}\in\mathbf{L}^2(\Omega)\quad\mbox{and}\quad
  \langle F'_\mathbf{u}(\mathbf{u},\mathbf{e})\mathbf{v},\mathbf{h}\rangle
  =\sum_{k=1}^m\sum_{j=1}^m\nabla^2_{u_ku_j}\mJ_k(u_1,\ldots,u_m,e_Y,e_k)v_kh_j.$$
Using the notations given above, we consider the parametric
variational inequality associated to the system \eqref{PerKthPVI} as
follows
\begin{equation}\label{NshPVInq}
    \mathbf{u}^*\in F(\mathbf{u},\mathbf{e})+N(\mathbf{u};\mU_{ad}(\mathbf{e})).
\end{equation}
The solution map $\mS:\mathbf{L}^2(\Omega)\times
\mathbf{E}\rightrightarrows\mathbf{L}^2(\Omega)$ of the PVI
\eqref{NshPVInq} is defined by
\begin{equation}\label{NshSolMap}
\begin{aligned}
    \mS(\mathbf{u}^*,\mathbf{e})
    &=\{\mathbf{u}\in\mU_{ad}(\mathbf{e})\st
      \mathbf{u}^*\in F(\mathbf{u},\mathbf{e})+N(\mathbf{u};\mU_{ad}(\mathbf{e}))\}\\
    &=\{\mathbf{u}\in\mU_{ad}(\mathbf{e})\st
      \mathbf{u}^*\in F(\mathbf{u},\mathbf{e})+\mN(\mathbf{u},\mathbf{e})\},
\end{aligned}
\end{equation}
where the normal cone mapping
$\mN:\mathbf{L}^2(\Omega)\times\mathbf{E}\rightrightarrows\mathbf{L}^2(\Omega)$
is defined by setting
\begin{equation}\label{NshNrCnMp}
    \mN(\mathbf{u},\mathbf{e})=N(\mathbf{u};\mU_{ad}(\mathbf{e})),\
    \forall(\mathbf{u},\mathbf{e})\in\mathbf{L}^2(\Omega)\times\mathbf{E}.
\end{equation}
Following \cite{MorNgh16SIOPT}, we now recall the concepts of
Lipschitzian and H\"olderian full stability for the PVI
\eqref{NshPVInq}.

\begin{Definition}\rm
Let
$\mathbf{\ou}=(\ou_1,\ldots,\ou_m)\in\mS(\mathbf{\ou}^*,\mathbf{\be})$
from \eqref{NshSolMap} with
$(\mathbf{\ou}^*,\mathbf{\be})\in\mathbf{L}^2(\Omega)\times\mathbf{E}$.

$\bullet$ We say that $\mathbf{\ou}$ is a \emph{Lipschitzian fully
stable solution to} the PVI \eqref{NshPVInq} corresponding to the
pair $(\mathbf{\ou}^*,\mathbf{\be})$ if the solution map
\eqref{NshSolMap} admits a single-valued localization $\vartheta$
relative to some neighborhood
$\mV_{\mathbf{\ou}^*}\times\mV_{\mathbf{\be}}\times\mV_{\mathbf{\ou}}$
such that for any
$(\mathbf{u}^*_1,\mathbf{e}_1),(\mathbf{u}^*_2,\mathbf{e}_2)\in\mV_{\mathbf{\ou}^*}\times\mV_{\mathbf{\be}}$
we have
\begin{equation}\label{NshFlLpCd}
   \big\|(\mathbf{u}^*_1-\mathbf{u}^*_2)-2\kappa
   \big(\vartheta(\mathbf{u}^*_1,\mathbf{e}_1)-\vartheta(\mathbf{u}^*_2,\mathbf{e}_2)\big)\big\|_{\mathbf{L}^2(\Omega)}
   \leq\|\mathbf{u}^*_1-\mathbf{u}^*_2\|_{\mathbf{L}^2(\Omega)}+\ell\|\mathbf{e}_1-\mathbf{e}_2\|_{\mathbf{E}}
\end{equation}
with some constants $\kappa>0$ and $\ell>0$.

$\bullet$ We say that $\mathbf{\ou}$ is a \emph{H\"olderian fully
stable solution to} the PVI \eqref{NshPVInq} corresponding to the
pair $(\mathbf{\ou}^*,\mathbf{\be})$ if the solution map
\eqref{NshSolMap} admits a single-valued localization $\vartheta$
relative to some neighborhood
$\mV_{\mathbf{\ou}^*}\times\mV_{\mathbf{\be}}\times\mV_{\mathbf{\ou}}$
such that for any
$(\mathbf{u}^*_1,\mathbf{e}_1),(\mathbf{u}^*_2,\mathbf{e}_2)\in\mV_{\mathbf{\ou}^*}\times\mV_{\mathbf{\be}}$
we have
\begin{equation}\label{NshFlHdrCd}
   \big\|(\mathbf{u}^*_1-\mathbf{u}^*_2)-2\kappa
   \big(\vartheta(\mathbf{u}^*_1,\mathbf{e}_1)-\vartheta(\mathbf{u}^*_2,\mathbf{e}_2)\big)\big\|_{\mathbf{L}^2(\Omega)}
   \leq\|\mathbf{u}^*_1-\mathbf{u}^*_2\|_{\mathbf{L}^2(\Omega)}+\ell\|\mathbf{e}_1-\mathbf{e}_2\|^{1/2}_{\mathbf{E}}
\end{equation}
with some constants $\kappa>0$ and $\ell>0$.
\end{Definition}

Note that the concepts of full stability for parametric variational
systems provided in \cite{MorNgh16SIOPT} are really extensions of
the ones for local minimizers of parametric optimization problems
given in \cite{LePoRo00SIOPT} and \cite{MorNgh14SIOPT}, and they are
very effective for studying solution stability for parametric
variational inequalities.

By applying the results of the previous section we deduce that the
PVI \eqref{NshPVInq} has solutions under our standard assumptions.
Beside the assumptions {\rm\textbf{(A1)}--\textbf{(A3)}} mentioned
above, the following condition with respect to the reference
parameter $\mathbf{\be}\in\mathbf{E}$ is also a crucial role in our
investigation:

\smallskip
\textbf{(A4)} There exists $\varrho>0$ such that
$\mU_{ad}(\mathbf{e})\neq\emptyset$ for all
$\mathbf{e}\in\oB_\varrho(\mathbf{\be})\subset\mathbf{E}$.

\smallskip
We see that when $\mU_{ad}$ and $\mU_{ad}(\mathbf{e})$ are defined
via $\mU^k_{ad}$ and $\mU^k_{ad}(e_k)$ for $k=1,\ldots,m$ given in
\eqref{ExACtrSet} and \eqref{NshAdSetK} respectively, we can use the
following condition as a sufficient condition to ensure that the
assumption \textbf{(A4)} holds:
\begin{equation}\label{SpEmtyCnd}
\begin{cases}
    \exists\sigma>0,\forall k=1,\ldots,m,\\
    \alpha_k(x)+\be_{k,\alpha}(x)+\sigma
    \leq\beta_k(x)+\be_{k,\beta}(x)~\mbox{for a.a.}~x\in\Omega.
\end{cases}
\end{equation}
Related to the condition \eqref{SpEmtyCnd} we refer the reader to
\cite{QuiDWa19SICON} for more details in applications to the full
stability for local minimizers of parametric optimal control
problems.

\subsection{General case for $\mU_{ad}(\mathbf{e})$}

In this subsection, we will establish conditions for the
Lipschitzian and H\"olderian full stability for variational Nash
equilibriums (solutions) to the PVI \eqref{NshPVInq} via the data
with respect to the general convex, closed and bounded admissible
control set $\mU_{ad}(\mathbf{e})\subset\mathbf{L}^2(\Omega)$ for
$\mathbf{e}\in\mathbf{E}$.

\begin{Theorem}\label{ThmMNLpCh}
Let $\mathbf{\ou}\in\mS(\mathbf{\ou}^*,\mathbf{\be})$ from
\eqref{NshSolMap} with
$(\mathbf{\ou}^*,\mathbf{\be})\in\mathbf{L}^2(\Omega)\times\mathbf{E}$
and $\mathbf{\hu}^*:=\mathbf{\ou}^*-F(\mathbf{\ou},\mathbf{\be})$.
Assume that the assumptions {\rm\textbf{(A1)}--\textbf{(A4)}} hold.
Then, $\mathbf{\ou}$ is Lipschitzian fully stable solution to the
PVI \eqref{NshPVInq} if and only if the both conditions below hold:
\begin{itemize}
\item[{\rm(i)}] There exist $\eta>0$, $\kappa>0$ such that for $(\mathbf{u},\mathbf{e},\mathbf{u}^*)\in\gph\mN\cap
B_\eta(\mathbf{\ou},\mathbf{\be},\mathbf{\hu}^*)$ and
$\mathbf{u}^{**}\in\mathbf{L}^2(\Omega)$ we have
\begin{equation}\label{}
    \langle F'_{\mathbf{u}}(\mathbf{\ou},\mathbf{\be})\mathbf{u}^{**},\mathbf{u}^{**}\rangle
    +\langle\mathbf{v},\mathbf{u}^{**}\rangle\geq\kappa\|\mathbf{u}^{**}\|_{\mathbf{L}^2(\Omega)},\
    \forall\mathbf{v}\in\widehat{D}^*\mN_\mathbf{e}(\mathbf{u},\mathbf{u}^*)(\mathbf{u}^{**}),
\end{equation}
where $\mN_\mathbf{e}(\cdot)=\mN(\cdot,\mathbf{e})$ from
\eqref{NshNrCnMp}.
\item[{\rm(ii)}] The graphical mapping $\mathbf{e}\mapsto\gph\mN(\cdot,\mathbf{e})$ is
locally Lipschitz-like around
$(\mathbf{\be},\mathbf{\ou},\mathbf{\hu}^*)$.
\end{itemize}
\end{Theorem}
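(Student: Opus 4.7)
The plan is to reduce Theorem~\ref{ThmMNLpCh} to a direct application of the abstract characterization of Lipschitzian full stability for parametric variational systems established by Mordukhovich and Nghia in \cite{MorNgh16SIOPT}. First, I would treat the parametric system \eqref{PerKthPVI} in the unified PVI form \eqref{NshPVInq} through the single-valued map $F$ from \eqref{FctnFue} and the set-valued parametric normal cone mapping $\mN$ from \eqref{NshNrCnMp}. That abstract framework applies to PVIs of the shape $\mathbf{u}^{*}\in F(\mathbf{u},\mathbf{e})+\mN(\mathbf{u},\mathbf{e})$ on an Asplund space, provided $F$ is of class $\mC^{1}$ with respect to $\mathbf{u}$, continuous in $\mathbf{e}$, and $\gph\mN$ is locally closed around the reference triple.

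Second, I would verify each of these prerequisites in our concrete PDE-constrained setting. Theorem~\ref{ThmFSdDerJ} together with Theorem~\ref{Thm24CRT} show that each component $\nabla_{u_{k}}\mJ_{k}(\cdot,\cdot,e_{Y},e_{k})$ is of class $\mC^{1}$ on $\mathbf{L}^{2}(\Omega)$, so $F(\cdot,\mathbf{e})$ is $\mC^{1}$ and the partial Fr\'echet derivative $F'_{\mathbf{u}}(\mathbf{\ou},\mathbf{\be})$ appearing in condition~(i) is well defined and continuous. Continuity of $F$ in $\mathbf{e}$ follows from continuous dependence of the states $y_{u+e_{Y}}$ and the associated adjoint states on the parameters $(e_{Y},e_{k})$ via Theorems~\ref{ThmExSoStEq} and~\ref{Thm24CRT}. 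Since $\mathbf{L}^{2}(\Omega)$ is a Hilbert space, it is in particular Asplund, and because each $\mU_{ad}^{k}(e_{k})$ is nonempty (by assumption~\textbf{(A4)}), closed, and convex in $L^{2}(\Omega)$ for $\mathbf{e}\in\oB_{\varrho}(\mathbf{\be})$, the graph of $\mN(\cdot,\cdot)$ is closed around $(\mathbf{\ou},\mathbf{\be},\mathbf{\hu}^{*})$ by standard outer semicontinuity of normal cones to parametric convex sets. The existence of the base solution $\mathbf{\ou}$ is provided by Theorem~\ref{ThmPaNshEq}.

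Third, with all prerequisites in place I would invoke the Mordukhovich--Nghia characterization of Lipschitzian full stability, which asserts equivalence between full stability of $\mathbf{\ou}$ and the combination of a uniform strong monotonicity inequality, mixing $F'_{\mathbf{u}}(\mathbf{\ou},\mathbf{\be})$ with the regular coderivative of $\mN_{\mathbf{e}}$ along nearby points of $\gph\mN_{\mathbf{e}}$, with the local Lipschitz-like property of the graphical mapping $\mathbf{e}\mapsto\gph\mN(\cdot,\mathbf{e})$. Under the shift $\mathbf{\hu}^{*}=\mathbf{\ou}^{*}-F(\mathbf{\ou},\mathbf{\be})$, which isolates the normal-cone component of the equilibrium relation at the base point so that the coderivative is evaluated at the correct reference, these two conditions are respectively precisely~(i) and~(ii) of the theorem. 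I expect the main obstacle to be guaranteeing that the hypotheses of the abstract result are matched in the full parametric setting in which both the cost and the admissible set depend on $\mathbf{e}$: continuity of $F$ with respect to $\mathbf{e}$ and local closedness of $\gph\mN$ rest on the PDE-regularity results of Section~2, whereas the graphical Lipschitz-likeness in~(ii) carries the whole $\mathbf{e}$-dependence of $\mU_{ad}(\mathbf{e})$ that cannot be captured by differentiation of $F$ alone.
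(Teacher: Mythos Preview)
Your overall strategy coincides with the paper's: both reduce Theorem~\ref{ThmMNLpCh} to a direct application of \cite[Theorem~4.7]{MorNgh16SIOPT} after verifying its standing hypotheses. However, your list of prerequisites understates what that abstract result actually requires. The paper checks (a) that $F$ is differentiable in $\mathbf{u}$ around $(\mathbf{\ou},\mathbf{\be})$ \emph{uniformly in} $\mathbf{e}$ with $F'_{\mathbf{u}}$ continuous at $(\mathbf{\ou},\mathbf{\be})$, (b) that $F$ is \emph{Lipschitz} continuous in $\mathbf{e}$ uniformly in $\mathbf{u}$ around $(\mathbf{\ou},\mathbf{\be})$---mere continuity is not enough---and, crucially, (c) that the parametric indicator $\delta(\cdot\,;\mU_{ad}(\cdot))$ is \emph{parametrically continuously prox-regular} at $(\mathbf{\ou},\mathbf{\be})$ for $\mathbf{\hu}^{*}$ and that the \emph{basic constraint qualification} (BCQ) holds at $(\mathbf{\ou},\mathbf{\be})$, both in the sense of \cite{MoOuSa14SIOPT}. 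Local closedness of $\gph\mN$ by itself is not sufficient for the set-valued part; the parametric continuous prox-regularity and the BCQ are precisely the structural assumptions on $\mN$ that \cite[Theorem~4.7]{MorNgh16SIOPT} is formulated under, and the paper obtains them by arguing as in the proof of \cite[Theorem~4.1]{QuiDWa19SICON}, using assumption~\textbf{(A4)}. Once you replace ``continuous in $\mathbf{e}$'' by the uniform Lipschitz condition and add the prox-regularity/BCQ verification, your plan matches the paper's proof.
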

\begin{proof}
Using the assumptions {\rm\textbf{(A1)}--\textbf{(A3)}} and applying
Theorem~\ref{ThmFSdDerJ} we deduce that $F(\cdot,\cdot)$ given in
\eqref{FctnFue} is differentiable with respect to $\mathbf{u}$
around $(\mathbf{\ou},\mathbf{\be})$ uniformly in $\mathbf{e}$ and
the partial derivative $F'_\mathbf{u}(\cdot,\cdot)$ is continuous at
$(\mathbf{\ou},\mathbf{\be})$. Moreover, $F(\cdot,\cdot)$ is also
Lipschitz continuous with respect to $\mathbf{e}$ uniformly in
$\mathbf{u}$ around $(\mathbf{\ou},\mathbf{\be})$. In addition, by
arguing similarly to the proof of \cite[Theorem~4.1]{QuiDWa19SICON}
we can verify that the parametric indicator
$\delta(\cdot\,;\mU_{ad}(\cdot))$ is parametrically continuously
prox-regular at $(\mathbf{\ou},\mathbf{\be})$ for $\mathbf{\hu}^*$
and the basic constraint qualification (BCQ) holds at
$(\mathbf{\ou},\mathbf{\be})$ under the assumption \textbf{(A4)};
see the parametric continuous prox-regularity and the BCQ in
\cite{MoOuSa14SIOPT}. Summarizing the above, we infer that all the
assumptions stated in \cite[Theorem~4.7]{MorNgh16SIOPT} are
satisfied. Therefore, applying \cite[Theorem~4.7]{MorNgh16SIOPT} we
obtain the assertion of the theorem. $\hfill\Box$
\end{proof}

\begin{Theorem}\label{ThmMNHdrCh}
Let $\mathbf{\ou}\in\mS(\mathbf{\ou}^*,\mathbf{\be})$ from
\eqref{NshSolMap} with
$(\mathbf{\ou}^*,\mathbf{\be})\in\mathbf{L}^2(\Omega)\times\mathbf{E}$
and $\mathbf{\hu}^*:=\mathbf{\ou}^*-F(\mathbf{\ou},\mathbf{\be})$.
Assume that the assumptions {\rm\textbf{(A1)}--\textbf{(A4)}} hold.
Let us consider the following two statements:
\begin{itemize}
\item[{\rm(i)}] $\mathbf{\ou}$ is H\"olderian fully stable solution to the PVI
\eqref{NshPVInq} corresponding to the parameter pair
$(\mathbf{\ou}^*,\mathbf{\be})$ with the moduli $\kappa>0$ and
$\ell>0$ taken from \eqref{NshFlHdrCd}.
\item[{\rm(ii)}] There exist some $\eta>0$, $\kappa_0>0$ such that for
$(\mathbf{u},\mathbf{e},\mathbf{u}^*)\in\gph\mN\cap
B_\eta(\mathbf{\ou},\mathbf{\be},\mathbf{\hu}^*)$ we have
\begin{equation}\label{}
    \langle F'_{\mathbf{u}}(\mathbf{\ou},\mathbf{\be})\mathbf{u}^{**},\mathbf{u}^{**}\rangle
    +\langle\mathbf{v},\mathbf{u}^{**}\rangle\geq\kappa_0\|\mathbf{u}^{**}\|_{\mathbf{L}^2(\Omega)},\
    \forall\mathbf{v}\in\widehat{D}^*\mN_\mathbf{e}(\mathbf{u},\mathbf{u}^*)(\mathbf{u}^{**}).
\end{equation}
\end{itemize}
Then, {\rm(i)} implies {\rm(ii)} with constant $\kappa_0$ that can
be chosen smaller than but arbitrarily closed to $\kappa$.
Conversely, the validity of {\rm(ii)} ensures that {\rm(i)} holds,
where $\kappa$ can be chosen smaller than but arbitrarily closed to
$\kappa_0$.
\end{Theorem}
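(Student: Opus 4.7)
The plan is to mirror the argument used for Theorem~\ref{ThmMNLpCh}, but to invoke the H\"olderian companion of \cite[Theorem~4.7]{MorNgh16SIOPT} rather than its Lipschitzian version. Since the two definitions \eqref{NshFlLpCd} and \eqref{NshFlHdrCd} differ only in the exponent $1/2$ attached to $\|\mathbf{e}_1-\mathbf{e}_2\|_{\mathbf{E}}$, \cite{MorNgh16SIOPT} provides a parallel characterization of H\"olderian full stability for parametric variational systems under the same prox-regularity/BCQ framework, and this companion result is what I shall apply.

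First I would verify that the abstract framework of \cite{MorNgh16SIOPT} is indeed available at $(\mathbf{\ou},\mathbf{\be},\mathbf{\hu}^*)$, reusing verbatim the setup from the proof of Theorem~\ref{ThmMNLpCh}. Under \textbf{(A1)}--\textbf{(A3)}, Theorem~\ref{ThmFSdDerJ} gives that the operator $F$ in \eqref{FctnFue} is differentiable in $\mathbf{u}$ around $(\mathbf{\ou},\mathbf{\be})$ uniformly in $\mathbf{e}$, with $F'_{\mathbf{u}}$ continuous at $(\mathbf{\ou},\mathbf{\be})$, and is Lipschitz continuous in $\mathbf{e}$ uniformly in $\mathbf{u}$. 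Under \textbf{(A4)}, the argument used in \cite[Theorem~4.1]{QuiDWa19SICON} shows that the parametric indicator $\delta(\cdot\,;\mU_{ad}(\cdot))$ is parametrically continuously prox-regular at $(\mathbf{\ou},\mathbf{\be})$ for $\mathbf{\hu}^*$ and that the basic constraint qualification of \cite{MoOuSa14SIOPT} holds at that point. These are exactly the ingredients required.

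The crucial structural point, and the reason condition {\rm(ii)} of Theorem~\ref{ThmMNLpCh} does not reappear here, is that H\"olderian full stability with exponent $1/2$ imposes a strictly weaker tolerance on how the solution map reacts to perturbations of $\mathbf{e}$. Consequently the Lipschitz-like behaviour of the graphical mapping $\mathbf{e}\mapsto\gph\mN(\cdot,\mathbf{e})$ is no longer needed: any mild coherence between $\gph\mN(\cdot,\mathbf{e})$ and $\gph\mN(\cdot,\mathbf{\be})$ is absorbed by the $1/2$-power in \eqref{NshFlHdrCd}. Thus the characterization collapses to a single coercivity-type inequality, which is precisely condition {\rm(ii)} of the statement. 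The two implications (i)$\Rightarrow$(ii) and (ii)$\Rightarrow$(i) then follow from the H\"olderian companion result in \cite{MorNgh16SIOPT} applied to the present setting.

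The only subtlety, and the one place to be careful, is the book-keeping of the constants $\kappa$ and $\kappa_0$. The abstract theorem in \cite{MorNgh16SIOPT} does not produce equality of moduli; rather it shows that (i) forces the coercivity bound to hold with any $\kappa_0<\kappa$, and conversely (ii) delivers \eqref{NshFlHdrCd} with any $\kappa<\kappa_0$. This "strictly smaller but arbitrarily close" phenomenon is intrinsic to the second-order description of strong maximal monotonicity used in \cite{MorNgh16SIOPT} and will be the main (and essentially the only) technical point to monitor. I anticipate no further obstacle, since all analytical prerequisites (smoothness of $F$, prox-regularity of the normal-cone mapping, BCQ) have already been established in the Lipschitzian analogue and transfer without change.
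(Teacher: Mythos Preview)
Your proposal is correct and follows essentially the same approach as the paper: the paper's proof simply says to argue as in Theorem~\ref{ThmMNLpCh} to verify the standing hypotheses and then invoke \cite[Theorem~4.3]{MorNgh16SIOPT}, which is precisely the H\"olderian companion result you have in mind. Your additional commentary on why the Lipschitz-like condition on $\mathbf{e}\mapsto\gph\mN(\cdot,\mathbf{e})$ drops out and on the tracking of $\kappa$ versus $\kappa_0$ is accurate and merely elaborates what the paper leaves implicit.
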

\begin{proof}
Arguing similarly to the proof of Theorem~\ref{ThmMNLpCh}, we can
check that all the assumptions of \cite[Theorem~4.3]{MorNgh16SIOPT}
hold. Therefore, applying \cite[Theorem~4.3]{MorNgh16SIOPT} we
obtain the assertions of the theorem. $\hfill\Box$
\end{proof}

\bigskip
Following \cite{Har77JMSJ}, we say that a closed and convex subset
$\Theta$ of a Banach space $X$ is \emph{polyhedric} at
$\ou\in\Theta$ for $\hu^*\in N(\ou;\Theta)$ if we have the
representation
\begin{equation}\label{CndPlyhrc}
    T(\ou;\Theta)\cap\{\hu^*\}^\bot=\cl\big(\cone(\Theta-\ou)\cap\{\hu^*\}^\bot\big),
\end{equation}
where
\begin{equation}\label{}
    \cone(\Theta-\ou)=\bigcup_{t>0}\frac{\Theta-\ou}{t}
\end{equation}
is the radial cone, and
\begin{equation}\label{}
    T(\ou;\Theta)=\cl(\cone(\Theta-\ou))
\end{equation}
is the tangent cone to $\Theta$ at $\ou$. The set $\Theta$ is said
to be \emph{polyhedric} if $\Theta$ is polyhedric at every
$u\in\Theta$ for any $u^*\in N(u;\Theta)$. The polyhedricity
property of a set is first introduced in \cite{Har77JMSJ} and then
applied extensively in optimal control; see, e.g.,
\cite{Bon98AMO,BonSha00B,ItoKun08B} and the references therein.

\begin{Theorem}{\rm(See \cite[Theorem~6.2]{MorNgh14SIOPT})}\label{ThmMNFrCdr}
Let
$(\mathbf{\ou},\mathbf{\be})\in\mU_{ad}(\mathbf{\be})\times\mathbf{E}$
and let $\mathbf{\hu}^*\in N(\mathbf{\ou};\mU_{ad}(\mathbf{\be}))$.
Then, we have
\begin{equation}\label{DomCmBnSbd}
    \dom\widehat{D}^*\mN_\mathbf{\be}(\mathbf{\ou},\mathbf{\hu}^*)
    \subset-T(\mathbf{\ou};\mU_{ad}(\mathbf{\be}))\cap\{\mathbf{\hu}^*\}^\bot.
\end{equation}
If, in addition, $\mU_{ad}(\mathbf{\be})$ is polyhedric at
$\mathbf{\ou}\in\mU_{ad}(\mathbf{\be})$ for $\mathbf{\hu}^*$, then
the equality
\begin{equation}\label{EqCmBnSbdf}
    \widehat{D}^*\mN_\mathbf{\be}(\mathbf{\ou},\mathbf{\hu}^*)(\mathbf{u})
    =\big(T(\mathbf{\ou};\mU_{ad}(\mathbf{\be}))\cap\{\mathbf{\hu}^*\}^\bot\big)^*
\end{equation}
holds for all
$\mathbf{u}\in-T(\mathbf{\ou};\mU_{ad}(\mathbf{\be}))\cap\{\mathbf{\hu}^*\}^\bot$.
\end{Theorem}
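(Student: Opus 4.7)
The plan is to exploit that $\mN_\mathbf{\be}(\cdot)=N(\cdot;K)$ with $K:=\mU_{ad}(\mathbf{\be})$ a closed, convex, bounded subset of the Hilbert space $\mathbf{L}^2(\Omega)$, so both claims reduce to a statement about the regular normal cone to the graph of the normal-cone mapping of a convex set at a distinguished point. I would begin by rewriting $\mathbf{v}\in\widehat{D}^*\mN_\mathbf{\be}(\mathbf{\ou},\mathbf{\hu}^*)(\mathbf{u})$ as $(\mathbf{v},-\mathbf{u})\in\widehat{N}((\mathbf{\ou},\mathbf{\hu}^*);\gph\mN_\mathbf{\be})$ and unfolding the definitions \eqref{RegSubdif}--\eqref{RgLmtgNrCn} into the standard limsup inequality along graph sequences $(\mathbf{u}_n,\mathbf{u}_n^*)\to(\mathbf{\ou},\mathbf{\hu}^*)$.

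For the domain inclusion \eqref{DomCmBnSbd}, I would test this limsup inequality along two distinguished families. Fixing $\mathbf{u}_n=\mathbf{\ou}$ and taking $\mathbf{u}_n^*=\mathbf{\hu}^*+t_n\mathbf{w}_n^*\in N(\mathbf{\ou};K)$ with $t_n\downarrow 0$ and $\mathbf{w}_n^*\to\mathbf{w}^*\in T(\mathbf{\hu}^*;N(\mathbf{\ou};K))$ forces $\langle\mathbf{u},\mathbf{w}^*\rangle\geq 0$, i.e., $-\mathbf{u}\in T(\mathbf{\hu}^*;N(\mathbf{\ou};K))^*$. The classical Moreau-type duality for the normal-cone map of a closed convex set identifies this polar with $T(\mathbf{\ou};K)\cap\{\mathbf{\hu}^*\}^\bot$, which yields \eqref{DomCmBnSbd}. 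A symmetric test with $\mathbf{u}_n^*=\mathbf{\hu}^*$ and $\mathbf{u}_n=\mathbf{\ou}+t_n\mathbf{w}_n$ for radial directions $\mathbf{w}_n\in\cone(K-\mathbf{\ou})$ encodes the companion upper bound on $\langle\mathbf{v},\mathbf{w}\rangle$ and will be reused in the next step.

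For equality \eqref{EqCmBnSbdf}, the nontrivial direction is $\supset$: every $\mathbf{v}\in(T(\mathbf{\ou};K)\cap\{\mathbf{\hu}^*\}^\bot)^*$ must satisfy the limsup defining the regular coderivative for $-\mathbf{u}$ in the critical cone. Given an arbitrary graph sequence $(\mathbf{u}_n,\mathbf{u}_n^*)\to(\mathbf{\ou},\mathbf{\hu}^*)$ I would split the numerator into $\langle\mathbf{v},\mathbf{u}_n-\mathbf{\ou}\rangle$ and $-\langle\mathbf{u},\mathbf{u}_n^*-\mathbf{\hu}^*\rangle$ and bound each term separately. The convexity identities $\langle\mathbf{\hu}^*,\mathbf{u}_n-\mathbf{\ou}\rangle\leq 0$ and $\langle\mathbf{u}_n^*,\mathbf{\ou}-\mathbf{u}_n\rangle\leq 0$ combine to force $\langle\mathbf{\hu}^*,\mathbf{u}_n-\mathbf{\ou}\rangle\to 0$, so the normalized directions cluster in $T(\mathbf{\ou};K)\cap\{\mathbf{\hu}^*\}^\bot$. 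The polyhedric identity \eqref{CndPlyhrc} then permits replacing these limiting tangent directions by radial directions inside $\cone(K-\mathbf{\ou})\cap\{\mathbf{\hu}^*\}^\bot$, along which $\mathbf{v}$ is automatically nonpositive by its polar-cone membership; this gives $\langle\mathbf{v},\mathbf{u}_n-\mathbf{\ou}\rangle\leq o(\|\mathbf{u}_n-\mathbf{\ou}\|)$. The dual term is handled symmetrically using $-\mathbf{u}\in T(\mathbf{\ou};K)\cap\{\mathbf{\hu}^*\}^\bot$ and the polar characterization of $T(\mathbf{\hu}^*;N(\mathbf{\ou};K))$.

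I expect the main obstacle to be precisely this replacement step. Polyhedricity is defined through a closure operation in \eqref{CndPlyhrc}, not by exact decomposability, so $\mathbf{u}_n-\mathbf{\ou}$ cannot be expressed directly as a radial direction in the critical cone. A diagonal-subsequence argument is required to produce approximants $\mathbf{w}_n\in\cone(K-\mathbf{\ou})\cap\{\mathbf{\hu}^*\}^\bot$ whose deviation from $(\mathbf{u}_n-\mathbf{\ou})/\|\mathbf{u}_n-\mathbf{\ou}\|$ is genuinely $o(1)$, and ensuring that the residual contribution to $\langle\mathbf{v},\mathbf{u}_n-\mathbf{\ou}\rangle$ is truly $o(\|\mathbf{u}_n-\mathbf{\ou}\|)$ rather than merely $O(\|\mathbf{u}_n-\mathbf{\ou}\|)$ is exactly what distinguishes the polyhedric case from the merely convex one, and is thus the crux of the equality claim.
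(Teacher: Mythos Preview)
The paper does not supply its own proof of this theorem: the statement is lifted verbatim from \cite[Theorem~6.2]{MorNgh14SIOPT} (as its header indicates) and is used as an external tool in the proofs of Theorems~\ref{ThmNcSfCd} and~\ref{ThmUpLwCn}. There is therefore nothing in the paper to compare your proposal against; what you have sketched is essentially the direct argument one finds in the cited reference, based on unfolding the regular normal cone to $\gph N(\cdot;K)$ and testing along tangent directions on each factor.

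One technical remark on your outline. In the ``symmetric test'' you freeze $\mathbf{u}_n^*=\mathbf{\hu}^*$ and move $\mathbf{u}_n=\mathbf{\ou}+t_n\mathbf{w}_n$ with $\mathbf{w}_n\in\cone(K-\mathbf{\ou})$; but $(\mathbf{u}_n,\mathbf{\hu}^*)\in\gph\mN_{\mathbf{\be}}$ requires $\mathbf{\hu}^*\in N(\mathbf{u}_n;K)$, which together with $\mathbf{\hu}^*\in N(\mathbf{\ou};K)$ forces $\langle\mathbf{\hu}^*,\mathbf{w}_n\rangle=0$. So your test directions are already confined to $\cone(K-\mathbf{\ou})\cap\{\mathbf{\hu}^*\}^\bot$, and passing from this radial set to the full critical cone $T(\mathbf{\ou};K)\cap\{\mathbf{\hu}^*\}^\bot$ already invokes the polyhedric identity~\eqref{CndPlyhrc}. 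Thus polyhedricity is used in the $\subset$ direction of \eqref{EqCmBnSbdf} as well, not only in the $\supset$ direction you single out as nontrivial. This does not break your argument, but the dependence on polyhedricity should be recorded in both halves.
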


For each
$(\mathbf{u},\mathbf{e},\mathbf{u}^*)\in\mathbf{L}^2(\Omega)\times\mathbf{E}\times\mathbf{L}^2(\Omega)$
with $\mathbf{u}^*\in\mN(\mathbf{u},\mathbf{e})$, we define the
critical cone
\begin{equation}\label{NshCrCone}
    \mC(\mathbf{u},\mathbf{e},\mathbf{u}^*)=T(\mathbf{u};\mU_{ad}(\mathbf{e}))\cap\{\mathbf{u}^*\}^\bot.
\end{equation}
Note that
$\mU_{ad}(\mathbf{e})=\mU^1_{ad}(e_1)\times\cdots\times\mU^m_{ad}(e_m)\subset\mathbf{L}^2(\Omega)$
with $\mU^k_{ad}(e_k)\subset L^2(\Omega)$ being convex and $u^*_k\in
N(u_k;\mU^k_{ad}(e_k))$ for every $k\in\{1,\ldots,m\}$. Therefore,
from \eqref{NshCrCone} we obtain
\begin{equation}\label{NshExCone}
    \mC(\mathbf{u},\mathbf{e},\mathbf{u}^*)
    =\prod_{k=1}^m\mC(u_k,e_k,u^*_k)
    =\prod_{k=1}^mT(u_k;\mU^k_{ad}(e_k))\cap\{u^*_k\}^\bot,
\end{equation}
where $\mC(u_k,e_k,u^*_k):=T(u_k;\mU^k_{ad}(e_k))\cap\{u^*_k\}^\bot$
for every $k\in\{1,\ldots,m\}$. Let us now define the sequential
outer limits of the critical cones
$\mC(\mathbf{u},\mathbf{e},\mathbf{u}^*)$ respectively in the weak*
topology of $\mathbf{L}^2(\Omega)$ by
\begin{equation}\label{LmWStopoAd}
    \mC_{w^*}(\mathbf{\ou},\mathbf{\be},\mathbf{\hu}^*)
    =\Limsup_{(\mathbf{u},\mathbf{e},\mathbf{u}^*)\stackrel{\gph\mN}
    \longrightarrow(\mathbf{\ou},\mathbf{\be},\mathbf{\hu}^*)}\mC(\mathbf{u},\mathbf{e},\mathbf{u}^*),
\end{equation}
and in the strong topology of $\mathbf{L}^2(\Omega)$ as follows
\begin{equation}\label{NsLmSTtpA}
    \mC_s(\mathbf{\ou},\mathbf{\be},\mathbf{\hu}^*)
    =\Big\{\mathbf{v}\in\mathbf{L}^2(\Omega)\Bst\exists(\mathbf{u}_n,\mathbf{e}_n,\mathbf{u}^*_n)
    \stackrel{\gph\mN}\longrightarrow(\mathbf{\ou},\mathbf{\be},\mathbf{\hu}^*),
    \mathbf{v}_n\in\mC(\mathbf{u}_n,\mathbf{e}_n,\mathbf{u}^*_n),\mathbf{v}_n\to\mathbf{v}\Big\}.
\end{equation}
Then, using \eqref{NshExCone} we deduce from \eqref{LmWStopoAd} and
\eqref{NsLmSTtpA} that
\begin{equation}\label{NshWeStCn}
    \mC_{w^*}(\mathbf{\ou},\mathbf{\be},\mathbf{\hu}^*)
    =\prod_{k=1}^m\mC_{w^*}(\ou_k,\be_k,\hu^*_k)
    \quad\mbox{and}\quad
    \mC_s(\mathbf{\ou},\mathbf{\be},\mathbf{\hu}^*)
    =\prod_{k=1}^m\mC_s(\ou_k,\be_k,\hu^*_k),
\end{equation}
where
\begin{equation}\label{}
    \mC_{w^*}(\ou_k,\be_k,\hu^*_k)
    =\Limsup_{(u,e,u^*)\stackrel{\gph\mN_k}
    \longrightarrow(\ou_k,\be_k,\ou^*_k)}\mC(u,e,u^*)
\end{equation}
and
\begin{equation}\label{}
    \mC_s(\ou_k,\be_k,\hu^*_k)
    =\Big\{v\in L^2(\Omega)\Bst\exists(u_{kn},e_{kn},u^*_{kn})
    \stackrel{\gph\mN_k}\longrightarrow(\ou_k,\be_k,\hu^*_k),
    v_{kn}\in\mC(u_{kn},e_{kn},u^*_{kn}),v_{kn}\to v\Big\}
\end{equation}
with $\mN_k(u,e)=N(u;\mU^k_{ad}(e))$ for all $(u,e)\in
L^2(\Omega)\times E_k$.

\begin{Definition}\rm
A quadratic form $Q:H\to\R$ on a Hilbert space $H$ is said to be a
\emph{Legendre form} if $Q$ is sequentially weakly lower
semicontinuous and that if $h_n$ converges weakly to $h$ in $H$ and
$Q(h_n)\to Q(h)$ then $h_n$ converges strongly to $h$ in $H$.
\end{Definition}

\begin{Theorem}\label{ThmNcSfCd}
Let
$\mathbf{\ou}=(\ou_1,\ldots,\ou_m)\in\mS(\mathbf{\ou}^*,\mathbf{\be})$
from \eqref{NshSolMap} and
$\mathbf{\hu}^*:=\mathbf{\ou}^*-F(\mathbf{\ou},\mathbf{\be})$.
Assume that the assumptions {\rm\textbf{(A1)}--\textbf{(A4)}} hold.
Let us define the quadratic form $\mQ:\mathbf{L}^2(\Omega)\to\R$ by
\begin{equation}\label{NsDfQdrFm}
    \mQ(\mathbf{h}):=\langle F'_\mathbf{u}(\mathbf{\ou},\mathbf{\be})\mathbf{h},\mathbf{h}\rangle,\
    \forall\mathbf{h}\in\mathbf{L}^2(\Omega).
\end{equation}
Then, the following assertions are valid:
\begin{itemize}
\item[{\rm(i)}] If the quadratic form $\mQ(\cdot)$
is a Legendre form on $\mathbf{L}^2(\Omega)$ and
\begin{equation}\label{NsStrPsDf}
    \langle F'_\mathbf{u}(\mathbf{\ou},\mathbf{\be})\mathbf{v},\mathbf{v}\rangle>0,\
    \forall\mathbf{v}\in\mC_{w^*}(\mathbf{\ou},\mathbf{\be},\mathbf{\hu}^*)~\mbox{with}~\mathbf{v}\neq0,
\end{equation}
then $\mathbf{\ou}$ is a fully stable solution to the PVI
\eqref{NshPVInq}.
\item[{\rm(ii)}] If $\mathbf{\ou}$ is a fully stable solution to the PVI
\eqref{NshPVInq} and that $\mU_{ad}(\mathbf{e})$ is polyhedric
around $\mathbf{\be}$, then we have the positive definiteness
condition
\begin{equation}\label{NsWkPDfCd}
    \langle F'_\mathbf{u}(\mathbf{\ou},\mathbf{\be})\mathbf{v},\mathbf{v}\rangle>0,\
    \forall\mathbf{v}\in\mC_s(\mathbf{\ou},\mathbf{\be},\mathbf{\hu}^*)~\mbox{with}~\mathbf{v}\neq0.
\end{equation}
\end{itemize}
\end{Theorem}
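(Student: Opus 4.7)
My strategy is to feed the hypotheses into the abstract characterizations Theorem~\ref{ThmMNLpCh} and Theorem~\ref{ThmMNHdrCh} via the regular-coderivative formula of Theorem~\ref{ThmMNFrCdr}, working through the product structure \eqref{NshExCone}--\eqref{NshWeStCn} of the critical cones inherited from the Nash setting. Because the bilinear form $\langle F'_\mathbf{u}(\mathbf{\ou},\mathbf{\be})\cdot,\cdot\rangle$ \emph{is} the quadratic form $\mQ$ in \eqref{NsDfQdrFm}, the uniform coderivative inequality demanded by those abstract theorems translates directly into a statement about $\mQ$ on critical-cone directions.

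For (i), I argue by contradiction. If the coderivative condition of Theorem~\ref{ThmMNLpCh}(i) fails, then there exist sequences $(\mathbf{u}_n,\mathbf{e}_n,\mathbf{u}_n^*)\stackrel{\gph\mN}{\longrightarrow}(\mathbf{\ou},\mathbf{\be},\mathbf{\hu}^*)$, unit vectors $\mathbf{u}_n^{**}\in\mathbf{L}^2(\Omega)$, and coderivative selectors $\mathbf{v}_n\in\widehat{D}^*\mN_{\mathbf{e}_n}(\mathbf{u}_n,\mathbf{u}_n^*)(\mathbf{u}_n^{**})$ driving the left-hand side of the inequality to $0$. The domain inclusion from Theorem~\ref{ThmMNFrCdr} forces $-\mathbf{u}_n^{**}\in\mC(\mathbf{u}_n,\mathbf{e}_n,\mathbf{u}_n^*)$, whence $\langle\mathbf{v}_n,\mathbf{u}_n^{**}\rangle\geq 0$ by the polar structure, and therefore $\limsup \mQ(\mathbf{u}_n^{**})\leq 0$. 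Extracting a weak limit $\mathbf{u}_n^{**}\rightharpoonup\mathbf{u}^{**}$ puts $-\mathbf{u}^{**}$ into $\mC_{w^*}(\mathbf{\ou},\mathbf{\be},\mathbf{\hu}^*)$ by \eqref{LmWStopoAd}. Weak sequential lower semicontinuity of the Legendre form together with \eqref{NsStrPsDf} force $\mathbf{u}^{**}=0$, and consequently $\mQ(\mathbf{u}_n^{**})\to 0 = \mQ(\mathbf{u}^{**})$; the Legendre property then upgrades to $\mathbf{u}_n^{**}\to 0$ strongly, contradicting $\|\mathbf{u}_n^{**}\|_{\mathbf{L}^2(\Omega)}=1$. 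The Lipschitz-like condition of Theorem~\ref{ThmMNLpCh}(ii) is verified separately from \textbf{(A4)} and the convex parametric structure of $\mU_{ad}(\mathbf{e})$, as in \cite{QuiDWa19SICON}; Theorem~\ref{ThmMNHdrCh} covers the H\"olderian case identically.

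For (ii), assume $\mathbf{\ou}$ is fully stable, so by Theorem~\ref{ThmMNLpCh}/\ref{ThmMNHdrCh} the coderivative inequality holds with some $\kappa>0$ on a neighborhood of $(\mathbf{\ou},\mathbf{\be},\mathbf{\hu}^*)$. Given $\mathbf{v}\in\mC_s(\mathbf{\ou},\mathbf{\be},\mathbf{\hu}^*)$ with $\mathbf{v}\neq 0$, pick approximating $\mathbf{v}_n\in\mC(\mathbf{u}_n,\mathbf{e}_n,\mathbf{u}_n^*)$ with $\mathbf{v}_n\to\mathbf{v}$ strongly, as provided by \eqref{NsLmSTtpA}. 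Polyhedricity of $\mU_{ad}(\mathbf{\be})$ combined with Theorem~\ref{ThmMNFrCdr} yields the exact identity $\widehat{D}^*\mN_{\mathbf{e}_n}(\mathbf{u}_n,\mathbf{u}_n^*)(-\mathbf{v}_n)=\mC(\mathbf{u}_n,\mathbf{e}_n,\mathbf{u}_n^*)^*$, which always contains the zero vector. Testing the coderivative inequality at $\mathbf{u}_n^{**}:=-\mathbf{v}_n$ with the zero selector produces $\mQ(\mathbf{v}_n)\geq\kappa\|\mathbf{v}_n\|^2_{\mathbf{L}^2(\Omega)}$, and the desired bound $\mQ(\mathbf{v})\geq\kappa\|\mathbf{v}\|^2_{\mathbf{L}^2(\Omega)}>0$ follows by passing to the limit using strong convergence and continuity of the bilinear form.

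The main obstacle is direction (i): positive definiteness is assumed only on the weak-star outer limit cone $\mC_{w^*}$, whereas the abstract coderivative criterion demands a uniform \emph{strong}-norm lower bound. The Legendre hypothesis is precisely the device that bridges this gap by upgrading weak subsequential convergence to strong convergence as soon as the quadratic values agree in the limit. A secondary technical point is verifying the Lipschitz-like condition on $\mathbf{e}\mapsto\gph\mN(\cdot,\mathbf{e})$ under~\textbf{(A4)} and, for (ii), reducing polyhedricity of the product set $\mU_{ad}(\mathbf{\be})$ to componentwise polyhedricity via the factorization \eqref{NshWeStCn}, which is standard for box-type admissible sets~\eqref{NshAdSetK}.
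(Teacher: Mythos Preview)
Your approach is essentially identical to the paper's: both parts proceed exactly as you outline---contradiction plus normalization plus weak-limit plus Legendre upgrade for (i), and polyhedricity plus the zero selector plus strong-limit passage for (ii).

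One small correction on the justification in part (i): you write that $\langle\mathbf{v}_n,\mathbf{u}_n^{**}\rangle\geq 0$ follows ``by the polar structure,'' but the polar identity \eqref{EqCmBnSbdf} in Theorem~\ref{ThmMNFrCdr} is only available under polyhedricity, which is \emph{not} assumed in (i); without it you only have the domain inclusion \eqref{DomCmBnSbd}. The paper instead invokes maximal monotonicity of $\mN_{\mathbf{e}_n}$ together with \cite[Lemma~3.3]{ChiTra12TJM} to obtain $\langle\mathbf{v}_n,\mathbf{u}_n^{**}\rangle\geq 0$ directly from $\mathbf{v}_n\in\widehat{D}^*\mN_{\mathbf{e}_n}(\mathbf{u}_n,\mathbf{u}_n^*)(\mathbf{u}_n^{**})$. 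The conclusion you need is correct; only the stated reason needs adjusting.
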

\begin{proof}
To prove the assertion (i) we suppose to the contrary that
$\mathbf{\ou}$ is not a fully stable solution to the PVI
\eqref{NshPVInq}. By Theorem~\ref{ThmMNLpCh}, one can find some
sequences
$(\mathbf{u}_n,\mathbf{e}_n,\mathbf{u}^*_n)\to(\mathbf{\ou},\mathbf{\be},\mathbf{\hu}^*)$
with $(\mathbf{u}_n,\mathbf{e}_n,\mathbf{u}^*_n)\in\gph\mN$ and
$(\mathbf{u}^{**}_n,\mathbf{v}_n)\in\mathbf{L}^2(\Omega)\times\mathbf{L}^2(\Omega)$
with
$\mathbf{v}_n\in\widehat{D}^*\mN_{\mathbf{e}_n}(\mathbf{u}_n,\mathbf{u}^*_n)(\mathbf{u}^{**}_n)$
such that
\begin{equation}\label{NsCntrCd}
    \langle F'_\mathbf{u}(\mathbf{\ou},\mathbf{\be})\mathbf{u}^{**}_n,\mathbf{u}^{**}_n\rangle
    +\langle\mathbf{v}_n,\mathbf{u}^{**}_n\rangle<\frac{1}{n}\|\mathbf{u}^{**}_n\|^2_{\mathbf{L}^2(\Omega)},\
    \forall n\in\N.
\end{equation}
Since
$\mN_{\mathbf{e}_n}:\mathbf{L}^2(\Omega)\times\mathbf{E}\rightrightarrows\mathbf{L}^2(\Omega)$
is maximal monotone, by \cite[Lemma~3.3]{ChiTra12TJM} we get
$\langle\mathbf{v}_n,\mathbf{u}^{**}_n\rangle\geq0$. According to
Theorem~\ref{ThmMNFrCdr}, we have
$\mathbf{u}^{**}_n\in-\mC(\mathbf{u}_n,\mathbf{e}_n,\mathbf{u}^*_n)$,
which yields $\mathbf{u}^{**}_n\neq0$. Combining this and
\eqref{NsCntrCd} with
$\langle\mathbf{v}_n,\mathbf{u}^{**}_n\rangle\geq0$, we have
\begin{equation}\label{NsCtrCdMdf}
    \langle F'_\mathbf{u}(\mathbf{\ou},\mathbf{\be})\mathbf{u}^{**}_n,\mathbf{u}^{**}_n\rangle
    <\frac{1}{n}\|\mathbf{u}^{**}_n\|^2_{\mathbf{L}^2(\Omega)}
    \quad\mbox{with}~~\mathbf{u}^{**}_n\in-\mC(\mathbf{u}_n,\mathbf{e}_n,\mathbf{u}^*_n).
\end{equation}
By setting
$\mathbf{\wu}^{**}_n:=\mathbf{u}^{**}_n\|\mathbf{u}^{**}_n\|^{-1}_{\mathbf{L}^2(\Omega)}$,
we deduce from \eqref{NsCtrCdMdf} that
\begin{equation}\label{NsCtrNwMdf}
    \mQ(\mathbf{\wu}^{**}_n)
    =\langle F'_\mathbf{u}(\mathbf{\ou},\mathbf{\be})\mathbf{\wu}^{**}_n,\mathbf{\wu}^{**}_n\rangle
    <\frac{1}{n}\quad\mbox{with}~~\mathbf{\wu}^{**}_n\in-\mC(\mathbf{u}_n,\mathbf{e}_n,\mathbf{u}^*_n).
\end{equation}
We may assume that $\mathbf{\wu}^{**}_n$ weakly converges to some
$\mathbf{\wu}^{**}$. It follows from \eqref{LmWStopoAd} that
\begin{equation}\label{NsCdBgCne}
    \mathbf{\wu}^{**}\in-\mC_{w^*}(\mathbf{\ou},\mathbf{\be},\mathbf{\hu}^*)
    \quad\mbox{or equivalently as}\quad-\mathbf{\wu}^{**}\in\mC_{w^*}(\mathbf{\ou},\mathbf{\be},\mathbf{\hu}^*).
\end{equation}
From the weak lower semicontinuity of $\mQ$ and from
\eqref{NsDfQdrFm}, \eqref{NsStrPsDf}, \eqref{NsCtrNwMdf} and
\eqref{NsCdBgCne} it follows that
\begin{equation}\label{}
    0\leq\mQ(-\mathbf{\wu}^{**})=\mQ(\mathbf{\wu}^{**})\leq\liminf_{n\to\infty}\mQ(\mathbf{\wu}^{**}_n)\leq0,
\end{equation}
which yields $\mQ(-\mathbf{\wu}^{**})=0$ with
$-\mathbf{\wu}^{**}\in\mC_{w^*}(\mathbf{\ou},\be,\mathbf{\hu}^*)$.
Since $\mQ(\cdot)$ is a Legendre form on $\mathbf{L}^2(\Omega)$, we
obtain $\mathbf{\wu}^{**}_n\to\mathbf{\wu}^{**}$. This implies that
$\|\mathbf{\wu}^{**}\|_{\mathbf{L}^2(\Omega)}=1$, and thus
$-\mathbf{u}^{**}\neq0$. We have arrived at a contradiction.

We now prove the assertion (ii). By Theorem~\ref{ThmMNLpCh}, there
exist $\kappa>0$, $\eta>0$ such that for any
$(\mathbf{u},\mathbf{e},\mathbf{u}^*)\in\gph\mN\cap
B_\eta(\mathbf{\ou},\mathbf{\be},\mathbf{\hu}^*)$ and
$\mathbf{u}^{**}\in\mathbf{L}^2(\Omega)$ we have
\begin{equation}\label{}
    \langle F'_\mathbf{u}(\mathbf{\ou},\mathbf{\be})\mathbf{u}^{**},\mathbf{u}^{**}\rangle
    +\langle\mathbf{v},\mathbf{u}^{**}\rangle\geq\kappa\|\mathbf{u}^{**}\|_{\mathbf{L}^2(\Omega)},\
    \forall\mathbf{v}\in\widehat{D}^*\mN_\mathbf{e}(\mathbf{u},\mathbf{u}^*)(\mathbf{u}^{**}).
\end{equation}
Since $\mU_{ad}(\mathbf{e})$ is polyhedric around $\mathbf{\be}$, by
Theorem~\ref{ThmMNFrCdr} we deduce that
$$\mathbf{u}^{**}\in-\mC(\mathbf{u},\mathbf{e},\mathbf{u}^*)
  \quad\mbox{and}\quad\widehat{D}^*\mN_\mathbf{e}(\mathbf{u},\mathbf{u}^*)(\mathbf{u}^{**})
  =\mC(\mathbf{u},\mathbf{e},\mathbf{u}^*)^*,$$
which yields
$0\in\widehat{D}^*\mN_\mathbf{e}(\mathbf{u},\mathbf{u}^*)(\mathbf{u}^{**})$.
Consequently, for all
$(\mathbf{u},\mathbf{e},\mathbf{u}^*)\in\gph\mN\cap
B_\eta(\mathbf{\ou},\mathbf{\be},\mathbf{\hu}^*)$, we have
\begin{equation}\label{NsLwSPsCd}
    \langle F'_\mathbf{u}(\mathbf{\ou},\mathbf{\be})\mathbf{u}^{**},\mathbf{u}^{**}\rangle
    \geq\kappa\|\mathbf{u}^{**}\|_{\mathbf{L}^2(\Omega)},\
    \forall\mathbf{u}^{**}\in-\mC(\mathbf{u},\mathbf{e},\mathbf{u}^*).
\end{equation}
Using the strong convergence in \eqref{NsLmSTtpA} and passing
\eqref{NsLwSPsCd} to the limit when $\eta\downarrow0$ we obtain the
positive definiteness condition \eqref{NsWkPDfCd}. $\hfill\Box$
\end{proof}

\medskip
The following lemma shows that the quadratic $\mQ(\cdot)$ defined by
\eqref{NsDfQdrFm} is a Legendre form on the space
$\mathbf{L}^2(\Omega)$. This is one of important results that will
help us to establish explicit characterizations of full stability
for parametric variational Nash equilibriums.

\begin{Lemma}\label{LmQLgrFm}
Assume that the assumptions {\rm\textbf{(A1)}--\textbf{(A4)}} hold.
Let
$(\mathbf{\ou},\mathbf{\be})\in\mathbf{L}^2(\Omega)\times\mathbf{E}$
and define the quadratic form $\mQ:\mathbf{L}^2(\Omega)\to\R$ by
\begin{equation}\label{DfLgdrmQ}
    \mQ(\mathbf{h})=\langle F'_\mathbf{u}(\mathbf{\ou},\mathbf{\be})\mathbf{h},\mathbf{h}\rangle,\
    \forall\mathbf{h}\in\mathbf{L}^2(\Omega).
\end{equation}
Then, $\mQ(\cdot)$ is a Legendre form on $\mathbf{L}^2(\Omega)$.
\end{Lemma}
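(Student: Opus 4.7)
The plan is to use the second-order formula from Theorem~\ref{ThmFSdDerJ} (applied to $\mJ_k$, where the extra linear term $(e_{k,J},y_{u+e_Y})_{L^2(\Omega)}$ only modifies the right-hand side of the adjoint equation) to split $\mQ$ into a positive-definite quadratic part plus a remainder that is sequentially weakly continuous, and then to verify the two defining properties of a Legendre form separately. Writing $\bar u := \sum_{i=1}^m B_i \ou_i$, let $\bar y := y_{\bar u + \be_Y}$ be the associated reference state and let $\bar\varphi_k$ denote the reference adjoint state of the $k$-th parametric problem, which by Theorem~\ref{Thm24CRT} and elliptic regularity lies in $W^{2,2}(\Omega) \hookrightarrow C(\bar\Omega)$ since $N\le 3$. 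For $\mathbf{h}=(h_1,\dots,h_m)\in\mathbf{L}^2(\Omega)$ let $z_k \in H^1_0(\Omega) \cap C(\bar\Omega)$ solve \eqref{EqSolZuv} with right-hand side $B_k h_k$ and linearization at $\bar y$. Then
\begin{equation*}
    \mQ(\mathbf{h}) = Q_1(\mathbf{h}) + Q_2(\mathbf{h}),
\end{equation*}
with
\begin{equation*}
    Q_1(\mathbf{h}) = \sum_{k=1}^m \int_\Omega \zeta_k h_k^2\, dx, \qquad
    Q_2(\mathbf{h}) = \sum_{k,j=1}^m \int_\Omega\!\left(\frac{\partial^2 L_k}{\partial y^2}(x,\bar y) - \bar\varphi_k\,\frac{\partial^2 f}{\partial y^2}(x,\bar y)\right) z_k z_j\, dx.
\end{equation*}

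The central step is to show that $Q_2$ is sequentially weakly continuous on $\mathbf{L}^2(\Omega)$. If $\mathbf{h}^n \rightharpoonup \mathbf{h}$ weakly, then $B_k h_k^n \rightharpoonup B_k h_k$ weakly in $L^2(\Omega)$; by the convergence assertion of Theorem~\ref{ThmExSoStEq} applied to the linear PDE~\eqref{EqSolZuv} (with frozen nonnegative potential $\frac{\partial f}{\partial y}(\cdot,\bar y)$), the corresponding solutions satisfy $z_k^n \to z_k$ strongly in $H^1_0(\Omega) \cap C(\bar\Omega)$. Since the coefficient $\frac{\partial^2 L_k}{\partial y^2}(\cdot,\bar y) - \bar\varphi_k\,\frac{\partial^2 f}{\partial y^2}(\cdot,\bar y)$ belongs to $L^\infty(\Omega)$ by \textbf{(A1)}--\textbf{(A2)} together with $\bar\varphi_k \in C(\bar\Omega)$, dominated convergence yields $Q_2(\mathbf{h}^n)\to Q_2(\mathbf{h})$. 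In particular $Q_2$ is sequentially weakly lower semicontinuous, and the same is trivially true of $Q_1$ because $0 < \zeta_{0k} \le \zeta_k(x) \le \|\zeta_k\|_{L^\infty(\Omega)}$ makes $Q_1$ an equivalent squared Hilbert norm on $\mathbf{L}^2(\Omega)$. Adding these gives the first defining property of a Legendre form for $\mQ$.

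For the second property, assume $\mathbf{h}^n \rightharpoonup \mathbf{h}$ and $\mQ(\mathbf{h}^n)\to\mQ(\mathbf{h})$. The sequential weak continuity of $Q_2$ forces $Q_1(\mathbf{h}^n)\to Q_1(\mathbf{h})$, and since $\sqrt{Q_1(\cdot)}$ is a Hilbert norm equivalent to $\|\cdot\|_{\mathbf{L}^2(\Omega)}$, the Radon--Riesz (Kadec--Klee) property of Hilbert spaces forces $\mathbf{h}^n \to \mathbf{h}$ strongly, finishing the proof. The main obstacle is precisely the sequential weak continuity of $Q_2$: it rests on upgrading weak $L^2$-convergence of controls to strong $H^1_0(\Omega)\cap C(\bar\Omega)$-convergence of the linearized states, which is the compactness content of Theorem~\ref{ThmExSoStEq} together with the $C(\bar\Omega)$-regularity of the reference adjoint state available because $N\le 3$.
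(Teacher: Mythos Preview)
Your proof is correct and follows essentially the same route as the paper: split $\mQ$ into the positive-definite control part $\sum_k\int_\Omega\zeta_k h_k^2\,dx$ and a state-dependent remainder that is sequentially weakly continuous by compactness of the linearized control-to-state map, then use that the control part is (equivalently) a Legendre form. The only cosmetic differences are that the paper writes the extra $\be_{k,J}$-contribution as a separate $G''$-term rather than absorbing it into the adjoint state, and it asserts directly that the control part is a Legendre form instead of invoking Radon--Riesz explicitly.
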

\begin{proof}
For $\mathbf{h}=(h_1,\ldots,h_m)\in\mathbf{L}^2(\Omega)$, we have
\begin{equation}\label{mQExpFrm}
    \mQ(\mathbf{h})=\langle F'_\mathbf{u}(\mathbf{\ou},\mathbf{\be})\mathbf{h},\mathbf{h}\rangle
    =\sum_{k=1}^m\sum_{j=1}^m\nabla^2_{u_ku_j}\mJ_k(\ou_1,\ldots,\ou_m,\be_Y,\be_k)h_kh_j.
\end{equation}
For $k,j\in\{1,\ldots,m\}$, we define $Q_{kj}:L^2(\Omega)\times
L^2(\Omega)\to\R$ by
\begin{equation}\label{DefQdrFm}
    Q_{kj}(v,h)=\nabla^2_{u_ku_j}\mJ_k(\ou_1,\ldots,\ou_m,\be_Y,\be_k)(v,h),\
    \forall(v,h)\in L^2(\Omega)\times L^2(\Omega).
\end{equation}
Then, from \eqref{mQExpFrm} we have
\begin{equation}\label{ExpmQQkj}
    \mQ(\mathbf{h})=\sum_{k=1}^m\sum_{j=1}^mQ_{kj}(h_k,h_j).
\end{equation}
We have
$$\begin{aligned}
    Q_{kj}(h_k,h_j)
    &=\nabla^2_{u_ku_j}\mJ_k(\ou_1,\ldots,\ou_m,\be_Y,\be_k)(h_k,h_j)\\
    &=\int_\Omega\bigg(\dfrac{\partial^2L_k}{\partial y^2}(x,y_{\ou+\be_Y})
     -\varphi_{k,\ou+\be_Y}\dfrac{\partial^2f}{\partial y^2}(x,y_{\ou+\be_Y})\bigg)
     z_{{\ou+\be_Y},h_k}z_{{\ou+\be_Y},h_j}dx\\
    &\quad +\int_\Omega\be_{k,J}G''(\ou+\be_Y)(h_k,h_j)dx+\int_\Omega\chi_{\{k\}}(j)\zeta_kh_kh_jdx\\
    &=Q^1_{kj}(h_k,h_j)+Q^2_{kj}(h_k,h_j),
\end{aligned}$$
where
$$\begin{aligned}
    Q^1_{kj}(h_k,h_j)
    &=\int_\Omega\bigg(\dfrac{\partial^2L_k}{\partial y^2}(x,y_{\ou+\be_Y})
     -\varphi_{k,\ou+\be_Y}\dfrac{\partial^2f}{\partial y^2}(x,y_{\ou+\be_Y})\bigg)
     z_{{\ou+\be_Y},h_k}z_{{\ou+\be_Y},h_j}dx\\
    &\quad +\int_\Omega\be_{k,J}G''(\ou+\be_Y)(h_k,h_j)dx
\end{aligned}$$ and
$$Q^2_{kj}(h_k,h_j)=\int_\Omega\chi_{\{k\}}(j)\zeta_kh_kh_jdx.$$
From \eqref{ExpmQQkj} we have
\begin{equation}\label{SumQQ1Q2}
    \mQ(\mathbf{h})=\sum_{k=1}^m\sum_{j=1}^mQ^1_{kj}(h_k,h_j)+\sum_{k=1}^mQ^2_{kk}(h_k,h_k)
    =\mQ_1(\mathbf{h})+\mQ_2(\mathbf{h}),
\end{equation}
where
$$\mQ_1(\mathbf{h})=\sum_{k=1}^m\sum_{j=1}^mQ^1_{kj}(h_k,h_j)\quad\mbox{and}
\quad\mQ_2(\mathbf{h})=\sum_{k=1}^mQ^2_{kk}(h_k,h_k).$$

Since the operator $G'(\ou+\be_Y):h\mapsto z_{\ou+\be_Y,h}$ from
$L^2(\Omega)$ into $L^2(\Omega)$ is compact, $Q^1_{kj}(h_k,h_j)$ is
weakly continuous on $L^2(\Omega)\times L^2(\Omega)$. It follows
that $\mQ_1(\mathbf{h})$ is weakly continuous on
$\mathbf{L}^2(\Omega)$. In addition, the quadratic form
\begin{equation}\label{}
    \mQ_2(\mathbf{h})=\sum_{k=1}^mQ^2_{kk}(h_k,h_k)=\sum_{k=1}^m\int_\Omega\zeta_kh_k^2dx
\end{equation}
is a Legendre form on $\mathbf{L}^2(\Omega)$.

Suppose that $\mathbf{h}_n\rightharpoonup\mathbf{h}$ in
$\mathbf{L}^2(\Omega)$ and that
$\mQ(\mathbf{h}_n)\to\mQ(\mathbf{h})$, where
$\mathbf{h}_n=(h_{1n},\ldots,h_{mn})$ and
$\mathbf{h}=(h_1,\ldots,h_m)$. Then, we have
\begin{equation}\label{hkWkCvgce}
    h_{kn}\rightharpoonup h_k,\ \forall k\in\{1,\ldots,m\}.
\end{equation}
It follows that
\begin{equation}\label{}
    \lim_{n\to\infty}Q^1_{kj}(h_{kn},h_{jn})=Q^1_{kj}(h_k,h_j),\ \forall k,j\in\{1,\ldots,m\}.
\end{equation}
Consequently, we have
\begin{equation}\label{}
    \lim_{n\to\infty}\mQ_1(\mathbf{h}_n)
    =\lim_{n\to\infty}\sum_{k=1}^m\sum_{j=1}^mQ^1_{kj}(h_{kn},h_{jn})
    =\sum_{k=1}^m\sum_{j=1}^mQ^1_{kj}(h_k,h_j)=\mQ_1(\mathbf{h}).
\end{equation}
Combining this with \eqref{SumQQ1Q2} yields
\begin{equation}\label{}
    \lim_{n\to\infty}\mQ_2(\mathbf{h}_n)
    =\lim_{n\to\infty}\Big(\mQ(\mathbf{h}_n)-\mQ_1(\mathbf{h}_n)\Big)
    =\mQ(\mathbf{h})-\mQ_1(\mathbf{h})=\mQ_2(\mathbf{h}).
\end{equation}
Since $\mQ_2(\cdot)$ is a Legendre form on $\mathbf{L}^2(\Omega)$,
we deduce that $\mathbf{h}_n\to\mathbf{h}$ as $n\to\infty$. We have
shown that the quadratic form $\mQ(\cdot)$ defined by
\eqref{DfLgdrmQ} is a Legendre form on $\mathbf{L}^2(\Omega)$.
$\hfill\Box$
\end{proof}

\begin{Lemma}[Mazur's lemma]\label{LemMzur}
Let $X$ be a Banach space and let $\{u_n\}_{n\in\N}\subset X$ be
such that $u_n$ converges weakly to some $\ou$ in $X$. Then, there
exist a function $\sigma:\N\to\N$ and a sequence $\{\varrho(n)_r\st
r=n,\ldots,\sigma(n)\}$ satisfying $\varrho(n)_r\geq0$ and
$\sum_{r=n}^{\sigma(n)}\varrho(n)_r=1$ such that for the sequence
$\{v_n\}_{n\in\N}$ defined by
$v_n=\sum_{r=n}^{\sigma(n)}\varrho(n)_ru_r$ we have $v_n$ converges
strongly to $\ou$ in $X$.
\end{Lemma}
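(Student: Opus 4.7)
The plan is to invoke the classical fact that for a convex subset of a Banach space, the norm closure coincides with the weak closure. This is a direct consequence of the Hahn--Banach separation theorem: any point lying outside the norm closure of a convex set $C$ can be strictly separated by a continuous linear functional, which then also separates it from the weak closure of $C$.

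First, for each $n \in \N$, I would introduce the convex hull
\begin{equation*}
    C_n := \conv\{u_r \st r \geq n\}.
\end{equation*}
Since $u_r \rightharpoonup \ou$ in $X$, the point $\ou$ lies in the weak closure of $\{u_r \st r \geq n\}$, and hence in the weak closure of $C_n$. By the Hahn--Banach consequence mentioned above, the weak closure of the convex set $C_n$ equals its norm closure, so $\ou$ belongs to the norm closure of $C_n$.

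Next, for each $n \in \N$, I would choose $v_n \in C_n$ with $\|v_n - \ou\|_X < 1/n$. By the definition of the convex hull, each $v_n$ is a finite convex combination of elements $u_r$ with $r \geq n$, so there exist an integer $\sigma(n) \geq n$ and nonnegative coefficients $\varrho(n)_r$ for $r = n, \ldots, \sigma(n)$ with $\sum_{r=n}^{\sigma(n)} \varrho(n)_r = 1$ such that
\begin{equation*}
    v_n = \sum_{r=n}^{\sigma(n)} \varrho(n)_r \, u_r.
\end{equation*}
(If some indices $r \geq n$ are skipped in the original convex combination, I simply assign them coefficient zero so the expression has the prescribed form.) By construction, $v_n \to \ou$ strongly in $X$, which yields the claim.

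The only subtle step is the passage from weak closure to norm closure of the convex set $C_n$; the rest is bookkeeping in rewriting a finite convex combination in the prescribed indexing. Since the statement is classical and no assumption on $X$ beyond being a Banach space is needed, I do not expect any genuine obstacle here.
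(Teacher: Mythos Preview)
Your proof is correct and is the standard argument for Mazur's lemma. Note, however, that the paper does not actually prove this statement: it is recorded as a classical lemma without proof and merely invoked later (to identify the weak and strong closures of a convex set). Your write-up supplies exactly the standard Hahn--Banach-based justification that the paper omits, so there is nothing to compare beyond observing that you have filled in what the authors left as a citation to folklore.
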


\begin{Theorem}\label{ThmUpLwCn}
Let
$(\mathbf{\ou},\mathbf{\be},\mathbf{\ou}^*)\in\mU_{ad}(\mathbf{\be})\times\mathbf{E}\times\mathbf{L}^2(\Omega)$
such that
$\mathbf{\hu}^*:=\mathbf{\ou}^*-F(\mathbf{\ou},\mathbf{\be})\in\mN(\mathbf{\ou},\mathbf{\be})$.
Assume that the assumptions {\rm\textbf{(A1)}--\textbf{(A4)}} hold.
Then, we have
\begin{equation}\label{UpEsCone}
    \mC_{w^*}(\mathbf{\ou},\mathbf{\be},\mathbf{\hu}^*)
    \subset\prod_{k=1}^m\Big(\cl\big[T(\ou_k;\mU^k_{ad}(\be_k))-T(\ou_k;\mU^k_{ad}(\be_k))\big]
    \cap\{\hu^*_k\}^\bot\Big).
\end{equation}
If, in addition, $\mU_{ad}(\mathbf{e})$ is polyhedric around
$\mathbf{\be}$, then we have the following lower estimate
\begin{equation}\label{LwEsCone}
    \prod_{k=1}^m\Big(\mC(\ou_k,\be_k,\hu^*_k)-\mC(\ou_k,\be_k,\hu^*_k)\Big)
    \subset\mC_s(\mathbf{\ou},\mathbf{\be},\mathbf{\hu}^*).
\end{equation}
\end{Theorem}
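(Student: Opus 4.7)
The plan is first to reduce both inclusions to their $k$-th coordinate counterparts by invoking the product structure \eqref{NshExCone}--\eqref{NshWeStCn}. It therefore suffices to show, for each $k\in\{1,\ldots,m\}$, the coordinate inclusions
$\mC_{w^*}(\ou_k,\be_k,\hu^*_k)\subset\cl[T(\ou_k;\mU^k_{ad}(\be_k))-T(\ou_k;\mU^k_{ad}(\be_k))]\cap\{\hu^*_k\}^\bot$
and, under polyhedricity,
$\mC(\ou_k,\be_k,\hu^*_k)-\mC(\ou_k,\be_k,\hu^*_k)\subset\mC_s(\ou_k,\be_k,\hu^*_k)$.

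For the upper estimate, I would pick $v\in\mC_{w^*}(\ou_k,\be_k,\hu^*_k)$ together with a witnessing sequence $(u_n,e_n,u_n^*)\to(\ou_k,\be_k,\hu^*_k)$ and $v_n\rightharpoonup v$ in $L^2(\Omega)$ with $v_n\in T(u_n;\mU^k_{ad}(e_n))\cap\{u_n^*\}^\bot$. The orthogonality $\langle\hu^*_k,v\rangle=0$ is immediate from continuity of the duality pairing under strong$\,\times\,$weak convergence applied to the identities $\langle u_n^*,v_n\rangle=0$. To establish the $\cl[T-T]$ membership, I would invoke Mazur's lemma (Lemma~\ref{LemMzur}) to extract strongly converging convex combinations $\tilde v_n\to v$. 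For each $v_r$ I would use the convexity identity $T(u_r;\mU^k_{ad}(e_r))=\cl\cone(\mU^k_{ad}(e_r)-u_r)$ to approximate $v_r$, with arbitrarily small error, by a radial-cone element $\lambda_r(w_r-u_r)=\lambda_r(w_r-\ou_k)-\lambda_r(u_r-\ou_k)$. Because the parametric admissible set $\mU^k_{ad}(\cdot)$ depends continuously on $e_k$ (for instance, in the box-constraint setting of Example~2.3 via $L^\infty$-perturbations of the bounds), I would project $w_r$ and $u_r$ onto nearby points of $\mU^k_{ad}(\be_k)$ with controlled $L^2$-error, thereby exhibiting $\tilde v_n$ as an element of $T(\ou_k;\mU^k_{ad}(\be_k))-T(\ou_k;\mU^k_{ad}(\be_k))$ plus a vanishing remainder; passing to the limit gives the claim.

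For the lower estimate, under polyhedricity I would use formula \eqref{CndPlyhrc} to write every $v^i\in\mC(\ou_k,\be_k,\hu^*_k)$, $i=1,2$, as a strong limit of radial-cone elements $\lambda_n^i(w_n^i-\ou_k)$ with $w_n^i\in\mU^k_{ad}(\be_k)$ and $\langle\hu^*_k,w_n^i-\ou_k\rangle=0$. Given $v=v^1-v^2$, my plan is to construct $u_n\in\mU^k_{ad}(\be_k)$ lying on a short ray from $\ou_k$ toward $w_n^2$, chosen so that the obstructing direction $-\lambda_n^2(w_n^2-\ou_k)$ becomes a legitimate tangent direction at $u_n$ (since one has moved strictly into the set in that direction) while $\lambda_n^1(w_n^1-\ou_k)$ remains tangent up to a vanishing correction. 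Keeping $e_n=\be_k$ fixed and producing a nearby normal $u_n^*\in N(u_n;\mU^k_{ad}(\be_k))$ that still annihilates the combined direction, I would obtain a strongly convergent sequence $v_n\to v$ with $v_n\in\mC(u_n,\be_k,u_n^*)$, placing $v$ in $\mC_s(\ou_k,\be_k,\hu^*_k)$.

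The main obstacle I anticipate is the simultaneous control of (a) the radial-cone scalars $\lambda_r$ and $\lambda_n^i$, which may be unbounded, against (b) the $L^2$-error incurred when projecting between the perturbed sets $\mU^k_{ad}(e_n)$ and the reference $\mU^k_{ad}(\be_k)$, and (c) the preservation of the orthogonality to $u_n^*$ throughout the construction. For the lower estimate, (c) is the delicate point: the perturbation $u_n$ must enlarge the tangent cone enough to contain the approximants of $v$, yet a nearby normal $u_n^*\to\hu^*_k$ must still annihilate them. This is exactly where the polyhedricity hypothesis \eqref{CndPlyhrc} is essential, because it supplies a radial-cone representation of critical directions that is compatible with the bi-orthogonality condition required for membership in $\mC_s$.
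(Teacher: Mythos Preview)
Your plan has the right skeleton but misses the two decisive simplifications the paper uses, and one of your workarounds imports an assumption that is not available.

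\textbf{Upper estimate.} Your projection step ``project $w_r$ and $u_r$ onto nearby points of $\mU^k_{ad}(\be_k)$'' requires continuity of $\mU^k_{ad}(\cdot)$ in $e_k$, which is \emph{not} part of {\rm\textbf{(A1)}--\textbf{(A4)}} (the box case is only an example). The paper avoids this entirely. Working at the product level, for each $n$ it approximates $\mathbf{v}_n\in T(\mathbf{u}_n;\mU_{ad}(\mathbf{e}_n))$ by a difference quotient $\varrho_n^{-1}(\mathbf{d}_n-\mathbf{u}_n)$ with $\mathbf{d}_n\in\mU_{ad}(\mathbf{e}_n)$, chosen diagonally so that $\mathbf{d}_n\to\mathbf{\ou}$; then it simply writes
\[
\frac{\mathbf{d}_n-\mathbf{u}_n}{\varrho_n}=\frac{\mathbf{d}_n-\mathbf{\ou}}{\varrho_n}-\frac{\mathbf{u}_n-\mathbf{\ou}}{\varrho_n},
\]
so that $\mathbf{v}$ lies in $\cl^w[T(\mathbf{\ou};\mU_{ad}(\mathbf{\be}))-T(\mathbf{\ou};\mU_{ad}(\mathbf{\be}))]$. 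Mazur's lemma is used only once, at the very end, to replace $\cl^w$ by $\cl$ via convexity of $T-T$; your use of Mazur to force strong convergence of the $v_n$'s first is what generates the need for the projection you cannot justify in general.

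\textbf{Lower estimate.} Your obstacle (c) --- finding a nearby normal $u_n^*$ that still annihilates the constructed direction --- dissolves with one observation you did not make: one may take $u_n^*=\hu^*_k$ itself. Indeed, with $u_n=\ou_k+t_n v_{2n}$ and $\langle\hu^*_k,v_{2n}\rangle=0$ (guaranteed by polyhedricity), for every $w\in\mU^k_{ad}(\be_k)$ one has $\langle\hu^*_k,w-u_n\rangle=\langle\hu^*_k,w-\ou_k\rangle\le 0$, so $\hu^*_k\in N(u_n;\mU^k_{ad}(\be_k))$ persists. Then, with $t_n=\min\{t_{1n},t_{2n}\}$ and $w_n=\ou_k+t_n v_{1n}\in\mU^k_{ad}(\be_k)$ (by convexity), the exact identity $v_{1n}-v_{2n}=t_n^{-1}(w_n-u_n)\in\cone(\mU^k_{ad}(\be_k)-u_n)\cap\{\hu^*_k\}^\bot\subset\mC(u_n,\be_k,\hu^*_k)$ holds with no ``vanishing correction'' needed. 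This is the missing idea in your plan; once seen, the lower estimate is immediate.
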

\begin{proof}
To prove \eqref{UpEsCone} we take any
$\mathbf{v}\in\mC_{w^*}(\mathbf{\ou},\mathbf{\be},\mathbf{\hu}^*)$.
By \eqref{LmWStopoAd}, there exist some sequences
$(\mathbf{u}_n,\mathbf{e}_n,\mathbf{u}^*_n)\to(\mathbf{\ou},\mathbf{\be},\mathbf{\hu}^*)$
with $(\mathbf{u}_n,\mathbf{e}_n,\mathbf{u}^*_n)\in\gph\mN$ and
$\mathbf{v}_n\stackrel{w}\rightharpoonup\mathbf{v}$ with
$\mathbf{v}_n\in\mC(\mathbf{u}_n,\mathbf{e}_n,\mathbf{u}^*_n)$. Note
that we have
$$\mathbf{v}_n\in\mC(\mathbf{u}_n,\mathbf{e}_n,\mathbf{u}^*_n)
  =T(\mathbf{u}_n;\mU_{ad}(\mathbf{e}_n))\cap\{\mathbf{u}^*_n\}^\bot.$$ Hence,
for each $n$, there exist sequences $\mathbf{w}_{nr}\to\mathbf{u}_n$
with $\mathbf{w}_{nr}\in\mU_{ad}(\mathbf{e}_n)$ and
$t_{nr}\downarrow0$ satisfying
$t_{nr}^{-1}(\mathbf{w}_{nr}-\mathbf{u}_n)\to\mathbf{v}_n$. We
define
$(\mathbf{d}_n,\varrho_n)\in\mU_{ad}(\mathbf{e}_n)\times(0,+\infty)$
by
$$(\mathbf{d}_n,\varrho_n)\in\{(\mathbf{w}_{nr},t_{nr})\st r\in\N\}$$
such that $\mathbf{d}_n\to\mathbf{\ou}$, $\varrho_n\downarrow0$, and
$\varrho_n^{-1}(\mathbf{d}_n-\mathbf{u}_n)-\mathbf{v}_n\to0$ when
$n\to\infty$. It follows that
$$\begin{aligned}
    \mathbf{v}=\lim^w_{n\to\infty}\mathbf{v}_n=\lim^w_{n\to\infty}\frac{\mathbf{d}_n-\mathbf{u}_n}{\varrho_n}
    &=\lim^w_{n\to\infty}\frac{(\mathbf{d}_n-\mathbf{\ou})-(\mathbf{u}_n-\mathbf{\ou})}{\varrho_n}\\
    &\in\cl^w[T(\mathbf{\ou};\mU_{ad}(\mathbf{\be}))-T(\mathbf{\ou};\mU_{ad}(\mathbf{\be}))\big],
\end{aligned}$$
where ``$\lim^w_{n\to\infty}\mathbf{v}_n$'' stands for the weak
limit of the sequence $\mathbf{v}_n$ in $\mathbf{L}^2(\Omega)$.
Since $\langle\mathbf{u}^*_n,\mathbf{v}_n\rangle=0$, by passing to
the limit we get $\langle\mathbf{\hu}^*,\mathbf{v}\rangle=0$.
Therefore, we obtain
$$\begin{aligned}
    \mathbf{v}\in\cl^w[T(\mathbf{\ou};\mU_{ad}(\mathbf{\be}))-T(\mathbf{\ou};\mU_{ad}(\mathbf{\be}))\big]
    \cap\{\mathbf{\hu}^*\}^\bot.
\end{aligned}$$
Since
$T(\mathbf{\ou};\mU_{ad}(\mathbf{\be}))-T(\mathbf{\ou};\mU_{ad}(\mathbf{\be}))$
is convex, by Lemma~\ref{LemMzur} (Mazur's lemma) we deduce that
$$\cl^w[T(\mathbf{\ou};\mU_{ad}(\mathbf{\be}))-T(\mathbf{\ou};\mU_{ad}(\mathbf{\be}))\big]
  =\cl[T(\mathbf{\ou};\mU_{ad}(\mathbf{\be}))-T(\mathbf{\ou};\mU_{ad}(\mathbf{\be}))\big].$$
This implies that
$\mathbf{v}\in\cl[T(\mathbf{\ou};\mU_{ad}(\mathbf{\be}))-T(\mathbf{\ou};\mU_{ad}(\mathbf{\be}))\big]$,
which yields
\begin{equation}\label{PrvUpEsCn}
    \mC_{w^*}(\mathbf{\ou},\mathbf{\be},\mathbf{\hu}^*)
    \subset\cl\big[T(\mathbf{\ou};\mU_{ad}(\mathbf{\be}))-T(\mathbf{\ou};\mU_{ad}(\mathbf{\be}))\big]
    \cap\{\mathbf{\hu}^*\}^\bot.
\end{equation}
In addition, we can verify that
\begin{equation}\label{PrvEqEsCn}
\begin{aligned}
    &\qquad\quad\;\,\cl\big[T(\mathbf{\ou};\mU_{ad}(\mathbf{\be}))-T(\mathbf{\ou};\mU_{ad}(\mathbf{\be}))\big]
    \cap\{\mathbf{\hu}^*\}^\bot\\
    &=\prod_{k=1}^m\Big(\cl\big[T(\ou_k;\mU^k_{ad}(\be_k))-T(\ou_k;\mU^k_{ad}(\be_k))\big]
    \cap\{\hu^*_k\}^\bot\Big).
\end{aligned}
\end{equation}
Combining \eqref{PrvUpEsCn} and \eqref{PrvEqEsCn} we obtain
\eqref{UpEsCone}.

To prove \eqref{LwEsCone} we take any
$\mathbf{v}=\mathbf{v}_1-\mathbf{v}_2
\in\mC(\mathbf{\ou},\mathbf{\be},\mathbf{\hu}^*)-\mC(\mathbf{\ou},\mathbf{\be},\mathbf{\hu}^*)$,
where we can verify that
$$\mC(\mathbf{\ou},\mathbf{\be},\mathbf{\hu}^*)-\mC(\mathbf{\ou},\mathbf{\be},\mathbf{\hu}^*)
  =\prod_{k=1}^m\Big(\mC(\ou_k,\be_k,\hu^*_k)-\mC(\ou_k,\be_k,\hu^*_k)\Big).$$
By the polyhedricity of $\mU_{ad}(\mathbf{\be})$ and by
\eqref{CndPlyhrc} we can find sequences
$\mathbf{v}_{1n}\to\mathbf{v}_1$, $\mathbf{v}_{2n}\to\mathbf{v}_2$
and $t_{1n}\downarrow0$, $t_{2n}\downarrow0$ such that
$\mathbf{\ou}+t_{1n}\mathbf{v}_{1n}\in\mU_{ad}(\mathbf{\be})$,
$\mathbf{\ou}+t_{2n}\mathbf{v}_{2n}\in\mU_{ad}(\mathbf{\be})$ and
$\mathbf{v}_{1n},\mathbf{v}_{2n}\in\{\mathbf{\hu}^*\}^\bot$. We
define $t_n:=\min\{t_{1n},t_{2n}\}$ and deduce from the convexity of
$\mU_{ad}(\mathbf{\be})$ that
$$\begin{cases}
    \mathbf{w}_n:=\mathbf{\ou}+t_n\mathbf{v}_{1n}
    =(1-t_nt_{1n}^{-1})\mathbf{\ou}+t_nt_{1n}^{-1}(\mathbf{\ou}+t_{1n}\mathbf{v}_{1n})\in\mU_{ad}(\mathbf{\be})\\
    \mathbf{u}_n:=\mathbf{\ou}+t_n\mathbf{v}_{2n}
    =(1-t_nt_{2n}^{-1})\mathbf{\ou}+t_nt_{2n}^{-1}(\mathbf{\ou}+t_{2n}\mathbf{v}_{2n})\in\mU_{ad}(\mathbf{\be}).\\
\end{cases}$$
Therefore, by choosing $\mathbf{e}_n=\mathbf{\be}$,
$\mathbf{u}^*_n=\mathbf{\hu}^*$, and
$\mathbf{v}_n=\mathbf{v}_{1n}-\mathbf{v}_{2n}$, we have
$(\mathbf{u}_n,\mathbf{e}_n,\mathbf{u}^*_n)\to(\mathbf{\ou},\mathbf{\be},\mathbf{\hu}^*)$
with $(\mathbf{u}_n,\mathbf{e}_n,\mathbf{u}^*_n)\in\gph\mN$ and
$\mathbf{v}_n\to\mathbf{v}$ with
$$\mathbf{v}_n=\mathbf{v}_{1n}-\mathbf{v}_{2n}=\frac{\mathbf{w}_n-\mathbf{u}_n}{t_n}
  \in\cone\big(\mU_{ad}(\mathbf{e}_n)-\mathbf{u}_n\big)\cap\{\mathbf{u}^*_n\}^\bot
  \subset\mC(\mathbf{u}_n,\mathbf{e}_n,\mathbf{u}^*_n).$$
This yields
$\mathbf{v}\in\mC_s(\mathbf{\ou},\mathbf{\be},\mathbf{\hu}^*)$ due
to \eqref{NsLmSTtpA}. $\hfill\Box$
\end{proof}

\begin{Theorem}
For the setting of Theorem~\ref{ThmNcSfCd}, the following assertions
are valid:
\begin{itemize}
\item[{\rm(i)}] If the positive definiteness condition
\begin{equation}\label{CnSPsDfCd}
    \langle F'_\mathbf{u}(\mathbf{\ou},\mathbf{\be})\mathbf{v},\mathbf{v}\rangle
    =\sum_{k=1}^m\sum_{j=1}^m\nabla^2_{u_ku_j}\mJ_k(\ou_1,\ldots,\ou_m,\be_Y,\be_k)v_kv_j>0
\end{equation}
holds for all $\mathbf{v}=(v_1,\ldots,v_m)\in\mathbf{L}^2(\Omega)$
satisfying
\begin{equation}\label{SPsDfCnd}
    0\neq\mathbf{v}\in\prod_{k=1}^m\Big(\cl\big[T(\ou_k;\mU^k_{ad}(\be_k))-T(\ou_k;\mU^k_{ad}(\be_k))\big]
    \cap\{\hu^*_k\}^\bot\Big),
\end{equation}
then $\mathbf{\ou}$ is a fully stable solution to the PVI
\eqref{NshPVInq}.
\item[{\rm(ii)}] If $\mathbf{\ou}$ is a fully stable solution to the PVI
\eqref{NshPVInq} and that $\mU_{ad}(\mathbf{e})$ is polyhedric
around $\mathbf{\be}$, then the condition \eqref{CnSPsDfCd} holds
for all $\mathbf{v}=(v_1,\ldots,v_m)\in\mathbf{L}^2(\Omega)$
satisfying
\begin{equation}\label{CnWPsDfCd}
    0\neq\mathbf{v}\in
    \prod_{k=1}^m\Big(\mC(\ou_k,\be_k,\hu^*_k)-\mC(\ou_k,\be_k,\hu^*_k)\Big),
\end{equation}
where $\mC(u_k,e_k,u^*_k)=T(u_k;\mU^k_{ad}(e_k))\cap\{u^*_k\}^\bot$
for every $k=1,\ldots,m$.
\end{itemize}
\end{Theorem}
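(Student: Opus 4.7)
The plan is to derive both assertions as direct consequences of the two main ingredients already established: the abstract positive-definiteness characterization in Theorem~\ref{ThmNcSfCd} and the explicit upper/lower bounds for the weak$^*$ and strong critical cone limits given in Theorem~\ref{ThmUpLwCn}. The Legendre property required to invoke Theorem~\ref{ThmNcSfCd}(i) is exactly the content of Lemma~\ref{LmQLgrFm}, so that machinery is fully available here.

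For assertion (i), I would first invoke Lemma~\ref{LmQLgrFm} to see that the quadratic form $\mQ(\mathbf{h})=\langle F'_\mathbf{u}(\mathbf{\ou},\mathbf{\be})\mathbf{h},\mathbf{h}\rangle$ is a Legendre form on $\mathbf{L}^2(\Omega)$. Next, the upper estimate \eqref{UpEsCone} of Theorem~\ref{ThmUpLwCn} yields the inclusion
\begin{equation*}
    \mC_{w^*}(\mathbf{\ou},\mathbf{\be},\mathbf{\hu}^*)
    \subset\prod_{k=1}^m\Big(\cl\big[T(\ou_k;\mU^k_{ad}(\be_k))-T(\ou_k;\mU^k_{ad}(\be_k))\big]\cap\{\hu^*_k\}^\bot\Big),
\end{equation*}
so any nonzero $\mathbf{v}\in\mC_{w^*}(\mathbf{\ou},\mathbf{\be},\mathbf{\hu}^*)$ belongs to the larger product set appearing in \eqref{SPsDfCnd}. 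Thus the assumed condition \eqref{CnSPsDfCd}--\eqref{SPsDfCnd} forces $\langle F'_\mathbf{u}(\mathbf{\ou},\mathbf{\be})\mathbf{v},\mathbf{v}\rangle>0$ for every nonzero $\mathbf{v}\in\mC_{w^*}(\mathbf{\ou},\mathbf{\be},\mathbf{\hu}^*)$, which is precisely the hypothesis \eqref{NsStrPsDf} of Theorem~\ref{ThmNcSfCd}(i). Applying that theorem produces the full stability of $\mathbf{\ou}$.

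For assertion (ii), the polyhedricity of $\mU_{ad}(\mathbf{e})$ around $\mathbf{\be}$ allows Theorem~\ref{ThmNcSfCd}(ii) to be invoked, delivering the positive definiteness
\begin{equation*}
    \langle F'_\mathbf{u}(\mathbf{\ou},\mathbf{\be})\mathbf{v},\mathbf{v}\rangle>0,\quad
    \forall\mathbf{v}\in\mC_s(\mathbf{\ou},\mathbf{\be},\mathbf{\hu}^*)\setminus\{0\}.
\end{equation*}
The lower estimate \eqref{LwEsCone} of Theorem~\ref{ThmUpLwCn}, also requiring polyhedricity, gives
\begin{equation*}
    \prod_{k=1}^m\Big(\mC(\ou_k,\be_k,\hu^*_k)-\mC(\ou_k,\be_k,\hu^*_k)\Big)
    \subset\mC_s(\mathbf{\ou},\mathbf{\be},\mathbf{\hu}^*),
\end{equation*}
so any nonzero $\mathbf{v}$ satisfying \eqref{CnWPsDfCd} lies in $\mC_s(\mathbf{\ou},\mathbf{\be},\mathbf{\hu}^*)$ and hence must satisfy \eqref{CnSPsDfCd} by the above. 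Combining this with the expansion formula from Theorem~\ref{ThmFSdDerJ} (which identifies $\langle F'_\mathbf{u}(\mathbf{\ou},\mathbf{\be})\mathbf{v},\mathbf{v}\rangle$ with the double sum in \eqref{CnSPsDfCd}) completes the argument.

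There is essentially no novel obstacle at this stage: the genuine analytic work has been absorbed into Lemma~\ref{LmQLgrFm} (the Legendre property), Theorem~\ref{ThmNcSfCd} (the abstract necessary/sufficient conditions for full stability via $\mC_{w^*}$ and $\mC_s$), and Theorem~\ref{ThmUpLwCn} (the explicit sandwich $\prod(\mC-\mC)\subset\mC_s$ and $\mC_{w^*}\subset\prod(\cl[T-T]\cap\{\hu^*\}^\bot)$). The only delicate point worth double-checking is the consistency of the product decomposition with the intersection by $\{\mathbf{\hu}^*\}^\bot$, i.e., that $\{\mathbf{\hu}^*\}^\bot=\prod_k\{\hu^*_k\}^\bot$ under the sum norm on $\mathbf{L}^2(\Omega)$; this was already verified inside the proof of Theorem~\ref{ThmUpLwCn} (see \eqref{PrvEqEsCn}) and requires no further work.
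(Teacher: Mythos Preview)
Your proposal is correct and matches the paper's own proof, which is the single line ``It follows from Theorems~\ref{ThmNcSfCd}, \ref{ThmUpLwCn} and Lemma~\ref{LmQLgrFm}''; you have simply unpacked the chain of inclusions and implications explicitly. One small wording slip: the literal equality $\{\mathbf{\hu}^*\}^\bot=\prod_k\{\hu^*_k\}^\bot$ is false in general, but the statement you actually need (and correctly cite via \eqref{PrvEqEsCn}) is the product decomposition of the \emph{intersection} $\cl[T-T]\cap\{\mathbf{\hu}^*\}^\bot$, so no harm is done.
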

\begin{proof}
It follows from Theorems~\ref{ThmNcSfCd}, \ref{ThmUpLwCn} and
Lemma~\ref{LmQLgrFm}. $\hfill\Box$
\end{proof}

\subsection{Case for $\mU_{ad}(\mathbf{e})$ of box constraint type}

It is worthy mentioning that the structure of the admissible control
set $\mU_{ad}(\mathbf{e})$ defined in \eqref{NshAdESet} via
\eqref{NshAdSetK} is standard and it has many nice properties.
Therefore, it is very frequently appearing in the optimal control
theory and applications. Our stability results established in the
previous subsection can be refined for the specific case of the
admissible control set. In this subsection, we will establish
explicit characterizations of full stability for variational Nash
equilibriums to the system \eqref{KthPVI} via the parametric system
\eqref{PerKthPVI} with respect to $\mU_{ad}(\mathbf{e})$ given by
\eqref{NshAdESet} and \eqref{NshAdSetK}.

\begin{Theorem}\label{ThmCmpCne}
Let
$(\mathbf{\ou},\mathbf{\be},\mathbf{\ou}^*)\in\mU_{ad}(\mathbf{\be})\times\mathbf{E}\times\mathbf{L}^2(\Omega)$
such that
$\mathbf{\hu}^*:=\mathbf{\ou}^*-F(\mathbf{\ou},\mathbf{\be})\in\mN(\mathbf{\ou},\mathbf{\be})$,
where $\mU_{ad}(\mathbf{\be})$ is defined by \eqref{NshAdESet} and
\eqref{NshAdSetK}. Assume that the assumptions
{\rm\textbf{(A1)}--\textbf{(A4)}} hold. Then, we have
\begin{equation}\label{ExFrCone}
    \mC_{w^*}(\mathbf{\ou},\mathbf{\be},\mathbf{\hu}^*)
    =\mC_s(\mathbf{\ou},\mathbf{\be},\mathbf{\hu}^*)
    =\prod_{k=1}^m\big\{v\in L^2(\Omega)\bst v(x)\hu^*_k(x)=0~\mbox{for a.a.}~x\in\Omega\big\}.
\end{equation}
\end{Theorem}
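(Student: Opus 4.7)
The strategy is to establish the chain of inclusions $\mC_s(\mathbf{\ou},\mathbf{\be},\mathbf{\hu}^*) \subset \mC_{w^*}(\mathbf{\ou},\mathbf{\be},\mathbf{\hu}^*) \subset R \subset \mC_s(\mathbf{\ou},\mathbf{\be},\mathbf{\hu}^*)$, where $R$ denotes the product on the right-hand side of \eqref{ExFrCone}. The first inclusion is immediate from the definitions: strong convergence in $\mathbf{L}^2(\Omega)$ entails weak convergence, so every sequence qualifying for $\mC_s$ also qualifies for $\mC_{w^*}$. The remaining two inclusions rest on the explicit pointwise structure of the normal and tangent cones to the box-type sets $\mU^k_{ad}(e_k)$ in \eqref{NshAdSetK}, together with the (classical) polyhedricity of such sets.

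For $\mC_{w^*} \subset R$, I would first observe that for each $k$ and any $(u_{k,n}, e_{k,n}, u^*_{k,n}) \in \gph \mN_k$ with $v_{k,n} \in \mC(u_{k,n}, e_{k,n}, u^*_{k,n})$, combining the pointwise sign conditions defining $T(u_{k,n}; \mU^k_{ad}(e_{k,n}))$ and $N(u_{k,n}; \mU^k_{ad}(e_{k,n}))$ gives $v_{k,n}(x) u^*_{k,n}(x) \leq 0$ a.e., and the orthogonality $\int v_{k,n} u^*_{k,n}\,dx = 0$ then upgrades this to the pointwise identity $v_{k,n} u^*_{k,n} = 0$ a.e.\ on $\Omega$. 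Given $\mathbf{v} \in \mC_{w^*}$ with approximating sequences $(\mathbf{u}_n, \mathbf{e}_n, \mathbf{u}^*_n) \to (\mathbf{\ou}, \mathbf{\be}, \mathbf{\hu}^*)$ in $\gph \mN$ and $\mathbf{v}_n \rightharpoonup \mathbf{v}$ with $\mathbf{v}_n \in \mC(\mathbf{u}_n, \mathbf{e}_n, \mathbf{u}^*_n)$, for any measurable $B \subset \Omega$ I would decompose
$$0 = \int_B v_{k,n} u^*_{k,n}\,dx = \int_B v_{k,n} \hu^*_k\,dx + \int_B v_{k,n}(u^*_{k,n} - \hu^*_k)\,dx.$$
The first integral converges to $\int_B v_k \hu^*_k\,dx$ by the weak convergence $v_{k,n} \rightharpoonup v_k$ tested against $\chi_B \hu^*_k \in L^2(\Omega)$, while the second tends to zero since $\{v_{k,n}\}$ is $L^2$-bounded and $u^*_{k,n} \to \hu^*_k$ strongly in $L^2(\Omega)$. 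Hence $\int_B v_k \hu^*_k\,dx = 0$ for every measurable $B$, which forces $v_k \hu^*_k = 0$ a.e.

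For $R \subset \mC_s$, the polyhedricity of box-type sets allows me to invoke the lower estimate \eqref{LwEsCone} of Theorem~\ref{ThmUpLwCn}, so it suffices to show that every $v_k$ with $v_k \hu^*_k = 0$ a.e.\ admits a splitting $v_k = v_k^+ - v_k^-$ with $v_k^\pm \in \mC(\ou_k, \be_k, \hu^*_k)$. On the strictly active set $\{|\hu^*_k|>0\}$ the assumed orthogonality forces $v_k = 0$, so I set $v_k^\pm = 0$ there; on the inactive and biactive portions of $\Omega$ I would take the positive/negative-part splitting $v_k^\pm = \max(\pm v_k, 0)$, which meets the pointwise sign requirement of $T(\ou_k;\mU^k_{ad}(\be_k))$ (nonnegative on the lower-active part, nonpositive on the upper-active part) and keeps $v_k^\pm \hu^*_k = 0$ pointwise. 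The main technical subtlety lies in the upper inclusion: the pointwise identity $v_{k,n} u^*_{k,n} = 0$ at each finite $n$ (rather than merely its integral form) is indispensable, since weak convergence of $v_{k,n}$ alone would otherwise yield only $\int v_k \hu^*_k\,dx = 0$, which is strictly weaker than the pointwise equality that the theorem asserts.
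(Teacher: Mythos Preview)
Your chain of inclusions and your argument for $\mC_{w^*}\subset R$ are sound; the decomposition $\int_B v_{k,n}u^*_{k,n}\,dx$ and the passage to the limit via weak/strong convergence are exactly right. The paper's own proof simply reduces to the componentwise statement via \eqref{NshWeStCn} and then cites \cite[Lemma~4.5]{QuiDWa19SICON} for each factor, so your direct argument is more self-contained.

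There is, however, a genuine slip in your construction for $R\subset\mC_s$. On the upper-biactive set $\{\,\ou_k(x)=\beta_k(x)+\be_{k,\beta}(x),\ \hu^*_k(x)=0\,\}$ every element of $\mC(\ou_k,\be_k,\hu^*_k)$ must be $\leq 0$ pointwise, because the tangent cone forces nonpositivity at upper-active points. Your proposed pieces $v_k^{\pm}=\max(\pm v_k,0)$ are both $\geq 0$ there, so $v_k^{+}$ does \emph{not} lie in $\mC(\ou_k,\be_k,\hu^*_k)$ unless $v_k\leq 0$ on that set, which is not assumed. The claim that the positive/negative-part splitting ``meets the pointwise sign requirement \ldots\ nonpositive on the upper-active part'' is therefore incorrect as written.

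The fix is minor: define the splitting piecewise. On the lower-biactive set keep $v_k^{(1)}=(v_k)^{+}$, $v_k^{(2)}=(v_k)^{-}$ (both $\geq 0$, as required); on the upper-biactive set set instead $v_k^{(1)}=-(v_k)^{-}$, $v_k^{(2)}=-(v_k)^{+}$ (both $\leq 0$, as required), which still gives $v_k^{(1)}-v_k^{(2)}=v_k$; on the inactive set any choice works; and on $\{\hu^*_k\neq 0\}$ set both to zero. With this adjustment each $v_k^{(i)}$ lies in $\mC(\ou_k,\be_k,\hu^*_k)$, and your appeal to the lower estimate \eqref{LwEsCone} goes through.
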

\begin{proof}
By our assumptions, for every $k\in\{1,\ldots,m\}$, the admissible
control set $\mU^k_{ad}(e_k)$ in \eqref{NshAdSetK} is convex and
polyhedric for $e_k\in E_k$; see, e.g.,
\cite[Remark~3.5]{QuiDWa19SICON}. Arguing similarly to the proof of
\cite[Lemma~4.5]{QuiDWa19SICON}, we deduce for that
$$\mC_{w^*}(\ou_k,\be_k,\hu^*_k)=\mC_s(\ou_k,\be_k,\hu^*_k)
  =\big\{v\in L^2(\Omega)\bst v(x)\hu^*_k(x)=0~\mbox{for a.a.}~x\in\Omega\big\}.$$
Combining this with \eqref{NshWeStCn} we obtain \eqref{ExFrCone}.
$\hfill\Box$
\end{proof}

\medskip
The forthcoming theorem provides us with an explicit
characterization of full stability for solutions (variational Nash
equilibriums) to the PVI \eqref{NshPVInq}.

\begin{Theorem}\label{ThmChrPVNE}
Assume that all the assumptions of Theorem~\ref{ThmNcSfCd} hold,
where $\mU_{ad}(\mathbf{e})$ is given by \eqref{NshAdESet} and
\eqref{NshAdSetK} for $\mathbf{e}\in\mathbf{E}$. Then $\mathbf{\ou}$
is a fully stable solution to the PVI \eqref{NshPVInq} if and only
if the positive definiteness condition \eqref{CnSPsDfCd} holds for
all $\mathbf{v}=(v_1,\ldots,v_m)\in\mathbf{L}^2(\Omega)$ satisfying
\begin{equation}\label{EquPsDfCd}
    0\neq\mathbf{v}\in
    \prod_{k=1}^m\big\{v\in L^2(\Omega)\bst v(x)\hu^*_k(x)=0~\mbox{for a.a.}~x\in\Omega\big\}.
\end{equation}
\end{Theorem}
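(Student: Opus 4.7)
The plan is to combine three earlier results and reduce this characterization to essentially a bookkeeping step. First, by Lemma~\ref{LmQLgrFm}, the quadratic form $\mQ(\mathbf{h}) = \langle F'_\mathbf{u}(\mathbf{\ou},\mathbf{\be})\mathbf{h}, \mathbf{h}\rangle$ is a Legendre form on $\mathbf{L}^2(\Omega)$, so the Legendre-form hypothesis in Theorem~\ref{ThmNcSfCd}(i) is automatically satisfied. Second, by Theorem~\ref{ThmCmpCne}, under the box-constraint structure \eqref{NshAdESet}--\eqref{NshAdSetK} for $\mU_{ad}(\mathbf{e})$, the sequential outer limits of the critical cones coincide and admit the explicit representation
\begin{equation*}
    \mC_{w^*}(\mathbf{\ou},\mathbf{\be},\mathbf{\hu}^*)
    = \mC_s(\mathbf{\ou},\mathbf{\be},\mathbf{\hu}^*)
    = \prod_{k=1}^m \bigl\{v \in L^2(\Omega) \bst v(x)\hu^*_k(x) = 0~\mbox{for a.a.}~x \in \Omega\bigr\}.
\end{equation*}

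For the sufficiency direction, I would assume that the positive definiteness condition \eqref{CnSPsDfCd} holds for every nonzero $\mathbf{v}$ satisfying \eqref{EquPsDfCd}. By the explicit formula above, this is exactly the condition \eqref{NsStrPsDf} of Theorem~\ref{ThmNcSfCd}(i) with the set $\mC_{w^*}(\mathbf{\ou},\mathbf{\be},\mathbf{\hu}^*)$. Together with the Legendre-form property from Lemma~\ref{LmQLgrFm}, Theorem~\ref{ThmNcSfCd}(i) then yields that $\mathbf{\ou}$ is a fully stable solution to \eqref{NshPVInq}.

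For the necessity direction, I would first observe that each factor $\mU^k_{ad}(e_k)$ in the box form \eqref{NshAdSetK} is polyhedric (as recalled in the proof of Theorem~\ref{ThmCmpCne} via \cite[Remark~3.5]{QuiDWa19SICON}), and hence so is the product $\mU_{ad}(\mathbf{e})$, locally around $\mathbf{\be}$. Thus the polyhedricity hypothesis of Theorem~\ref{ThmNcSfCd}(ii) is in force. Assuming full stability of $\mathbf{\ou}$, Theorem~\ref{ThmNcSfCd}(ii) gives \eqref{NsWkPDfCd}, i.e., $\langle F'_\mathbf{u}(\mathbf{\ou},\mathbf{\be})\mathbf{v},\mathbf{v}\rangle > 0$ for all nonzero $\mathbf{v} \in \mC_s(\mathbf{\ou},\mathbf{\be},\mathbf{\hu}^*)$. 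Invoking the explicit identity from Theorem~\ref{ThmCmpCne} once more converts this into \eqref{CnSPsDfCd} on the set \eqref{EquPsDfCd}.

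No step looks genuinely difficult because all the substantial analytic work (the Legendre-form property, the cone identification for box constraints, and the general necessary and sufficient criteria for full stability of the PVI) has already been completed earlier in the paper. The only mildly delicate point is ensuring that the two sided-implications of Theorem~\ref{ThmNcSfCd} can in fact be welded into an iff, and this is precisely what the equality $\mC_{w^*} = \mC_s$ in Theorem~\ref{ThmCmpCne} provides: the gap between the sufficient condition phrased on the weak* outer limit and the necessary condition phrased on the strong outer limit collapses exactly for the box-type admissible sets. Beyond this conceptual remark, the proof should be essentially a one-line citation chain.
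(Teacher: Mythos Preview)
Your proposal is correct and matches the paper's approach exactly: the paper's proof is the single line ``It follows from Theorems~\ref{ThmNcSfCd}, \ref{ThmCmpCne} and Lemma~\ref{LmQLgrFm},'' which is precisely the citation chain you spelled out. Your additional observation about polyhedricity of the box constraints (needed for Theorem~\ref{ThmNcSfCd}(ii)) and the role of the equality $\mC_{w^*}=\mC_s$ in closing the gap between necessity and sufficiency are exactly the implicit steps behind that one-liner.
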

\begin{proof}
It follows from Theorems~\ref{ThmNcSfCd}, \ref{ThmCmpCne} and
Lemma~\ref{LmQLgrFm}. $\hfill\Box$
\end{proof}

\medskip
It is interesting to know that there is a relationship between
variational Nash equilibriums that are fully stable under
perturbations and local Nash equilibriums in the classical sense.
From Theorem~\ref{ThmChrPVNE} we obtain the following result on the
aforementioned relationship.

\begin{Theorem}\label{ThmFSvFMzr}
Let $\mathbf{\ou}\in\mS(\mathbf{\ou}^*,\mathbf{\be})$ and put
$\mathbf{\hu}^*:=\mathbf{\ou}^*-F(\mathbf{\ou},\mathbf{\be})$, where
$\mU_{ad}(\cdot)$ is defined by \eqref{NshAdESet} and
\eqref{NshAdSetK}. Assume that the assumptions
{\rm\textbf{(A1)}--\textbf{(A4)}} hold. If $\mathbf{\ou}$ is a fully
stable solution to the PVI \eqref{NshPVInq}, then $\mathbf{\ou}$ is
a local Nash equilibrium associated to the parametric optimal
control problems
\begin{equation}\label{TlPrKthMzr}
    \mbox{\rm Minimize}\quad
    \mJ_k(u_k,u_{-k},e_Y,e_k)-(u^*_k,u_k)_{L^2(\Omega)}~~\mbox{\rm subject to}~~u_k\in\mU^k_{ad}(e_k)
\end{equation}
with respect to $(\mathbf{\ou}^*,\mathbf{\be})$, where the
functional $\mJ_k(u_k,u_{-k},e_Y,e_k)$ is defined in
\eqref{BaPaCFctn}.
\end{Theorem}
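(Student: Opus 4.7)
The plan is to reduce the product-level characterization of full stability provided by Theorem~\ref{ThmChrPVNE} to a per-player second-order sufficient optimality condition, and then invoke the classical sufficient optimality result for parametric semilinear elliptic control problems with box constraints to conclude strict local optimality of each $\ou_k$ in its own tilted problem.

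First I would apply Theorem~\ref{ThmChrPVNE} to obtain that the positive definiteness condition \eqref{CnSPsDfCd} holds for every nonzero $\mathbf{v}$ in the product cone \eqref{EquPsDfCd}. Fix $k\in\{1,\ldots,m\}$ and any nonzero $v_k\in L^2(\Omega)$ with $v_k(x)\hu^*_k(x)=0$ a.e.\ on $\Omega$, and plug the one-player test vector $\mathbf{v}:=(0,\ldots,0,v_k,0,\ldots,0)$ into \eqref{CnSPsDfCd}; the product-cone condition \eqref{EquPsDfCd} is trivially satisfied, and the double sum collapses onto its $(k,k)$ diagonal entry, yielding
\begin{equation*}
    \nabla^2_{u_ku_k}\mJ_k(\ou_1,\ldots,\ou_m,\be_Y,\be_k)(v_k,v_k)>0.
\end{equation*}
In parallel, from $\mathbf{\ou}\in\mS(\mathbf{\ou}^*,\mathbf{\be})$ and the definition $\mathbf{\hu}^*=\mathbf{\ou}^*-F(\mathbf{\ou},\mathbf{\be})$, the component $\ou_k$ satisfies the first-order necessary optimality condition $0\in\nabla_{u_k}\mJ_k(\ou_k,\ou_{-k},\be_Y,\be_k)-\ou^*_k+N(\ou_k;\mU^k_{ad}(\be_k))$ for the tilted problem \eqref{TlPrKthMzr} with $u_{-k}$ frozen at $\ou_{-k}$, and $\hu^*_k\in N(\ou_k;\mU^k_{ad}(\be_k))$ serves as the box-constraint multiplier. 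By Theorem~\ref{ThmCmpCne} the critical cone $\mC_s(\ou_k,\be_k,\hu^*_k)$ coincides with $\{v\in L^2(\Omega)\bst v(x)\hu^*_k(x)=0~\mbox{a.e.}\}$, so the displayed coercivity is precisely the strict positivity of the Hessian of the $k$-th cost on the critical cone of its tilted problem.

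Finally I would invoke the classical second-order sufficient optimality condition for semilinear elliptic optimal control problems with box-type admissible sets (as in \cite{CaReTr08SIOPT}) to upgrade this coercivity, together with the first-order condition, to strict $L^2(\Omega)$-local optimality of $\ou_k$ for \eqref{TlPrKthMzr}. The restricted quadratic $v\mapsto\nabla^2_{u_ku_k}\mJ_k(\mathbf{\ou},\mathbf{\be})(v,v)$ on $L^2(\Omega)$ is a Legendre form: this follows from Lemma~\ref{LmQLgrFm} applied along the coordinate embedding $v\mapsto(0,\ldots,v,\ldots,0)$, or equivalently from the decomposition (via the compactness of $G'(\ou+\be_Y):L^2(\Omega)\to L^2(\Omega)$ and the adjoint representation in Theorem~\ref{ThmFSdDerJ}) into a weakly continuous part plus the coercive term $\int_\Omega\zeta_k v^2\,dx$. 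Performing the argument for each $k\in\{1,\ldots,m\}$ yields that $\mathbf{\ou}$ is a local Nash equilibrium of \eqref{TlPrKthMzr} with respect to $(\mathbf{\ou}^*,\mathbf{\be})$.

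The main obstacle is the SSC-to-local-minimum step in the $L^2$-sense, because a naive Taylor expansion is obstructed by the fact that the Hessian is only weakly lower semicontinuous on $L^2(\Omega)$. The Legendre property of the form rescues the standard contradiction scheme: assuming $\ou_k$ is not a strict local minimizer, one extracts a sequence $u_{k,n}\to\ou_k$ in $L^2(\Omega)$ with nonpositive objective increments, normalizes the directions $v_n:=(u_{k,n}-\ou_k)/\|u_{k,n}-\ou_k\|_{L^2(\Omega)}$, extracts a weak limit $v$ lying in the (strong) critical cone, passes to the limit in the second-order Taylor expansion, and uses the Legendre property to promote weak convergence to strong convergence $v_n\to v$ with $\|v\|_{L^2(\Omega)}=1$, contradicting the strict positivity just established; the case $v=0$ is ruled out directly by the uniform coercive lower bound furnished by $\int_\Omega\zeta_k v_n^2\,dx$. $\hfill\Box$
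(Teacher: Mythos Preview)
Your proof is correct and follows essentially the same approach as the paper: apply Theorem~\ref{ThmChrPVNE} to obtain the product positive-definiteness condition, restrict to single-coordinate test vectors to extract the per-player second-order sufficient condition, and then upgrade to strict local optimality of each $\ou_k$ in its tilted problem. The only difference is that where you carry out the SSC-to-local-minimum step directly via the Legendre property and \cite{CaReTr08SIOPT}, the paper delegates this step to \cite[Theorem~4.8]{QuiDWa19SICON}, which yields the nominally stronger (but here unneeded) conclusion that each $\ou_k$ is in fact a fully stable local minimizer of \eqref{TlPrKthMzr}.
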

\begin{proof}
Applying Theorem~\ref{ThmChrPVNE}, we infer that the positive
definiteness condition \eqref{CnSPsDfCd} holds for all
$\mathbf{v}=(v_1,\ldots,v_m)\in\mathbf{L}^2(\Omega)$ satisfying
\eqref{EquPsDfCd}. This implies that for every $k\in\{1,\ldots,m\}$
the condition
\begin{equation}\label{}
    \nabla^2_{u_ku_k}\mJ_k(\ou_k,\ou_{-k},\be_Y,\be_k)v_kv_k>0
\end{equation}
holds for all $v_k\in L^2(\Omega)$ with $v_k\neq0$ and
$v_k(x)\hu^*_k(x)=0$ for a.a. $x\in\Omega$. Combining this with
\cite[Theorem~4.8]{QuiDWa19SICON} we deduce that $\ou_k$ is a
(Lipschitzian and H\"{o}lderian) fully stable local minimizer of the
parametric control problem \eqref{TlPrKthMzr} with respect to
$(\ou^*_k,\be_Y,\be_{k,J},\be_{k,\alpha},\be_{k,\beta})$ for every
$k=1,\ldots,m$; see definitions of Lipschitzian and H\"{o}lderian
fully stable local minimizers in \cite{MorNgh14SIOPT,QuiDWa19SICON}.
This implies that $\mathbf{\ou}$ is a local Nash equilibrium
associated to the parametric control problem \eqref{TlPrKthMzr} with
respect to $(\mathbf{\ou}^*,\mathbf{\be})$. $\hfill\Box$
\end{proof}

\begin{Remark}\rm
Theorems~\ref{ThmChrPVNE} and \ref{ThmFSvFMzr} can be viewed as
generalizations of \cite[Theorem~2.2]{Cas12SICON} (see also the
results in \cite{CasTro12SIOPT}) to the perturbed cases.
\end{Remark}

\section{Concluding remarks}

In this paper, we have provided some new results on the existence of
variational/classical Nash equilibriums to the equilibrium problem
associated to the nonconvex/convex optimal control problems governed
by semilinear elliptic partial differential equations. In addition,
we have established a necessary condition and a sufficient condition
(resp., an explicit characterization) of full stability for
variational Nash equilibrium to the parametric equilibrium problem
under full perturbations for general nonempty bounded closed convex
component admissible control sets (resp., for component admissible
control sets of box constraint type). Furthermore, for the case
where the component admissible control sets of box constraint type,
we have also proved that variational Nash equilibriums and local
Nash equilibriums to the parametric equilibrium problem are
equivalent provided that the variational Nash equilibriums are fully
stable.

\end{document}